\renewcommand{\theequation}{\thesection\arabic{equation}}
\providecommand{\add}[1]{{\protect\color{blue}{#1}}}
\begin{document}

\renewcommand{\baselinestretch}{1}

\markright{ \hbox{\footnotesize\rm Statistica Sinica
}\hfill\\[-13pt]
\hbox{\footnotesize\rm
}\hfill }

\markboth{\hfill{\footnotesize\rm \MakeUppercase{Kin Wai Chan} AND \MakeUppercase{Xiao-Li Meng}} \hfill}
{\hfill {\footnotesize\rm MI Likelihood Ratio Tests} \hfill}

\renewcommand{\thefootnote}{}
$\ $\par


\fontsize{11}{14pt plus.8pt minus .6pt}\selectfont \vspace{0.8pc}
\centerline{\large\bf \MakeUppercase{Multiple Improvements of}}
\vspace{2pt} \centerline{\large\bf \MakeUppercase{Multiple Imputation Likelihood Ratio Tests}}
\vspace{.4cm} \centerline{Kin Wai Chan${}^1$ and Xiao-Li Meng${}^2$} \vspace{.4cm} 
\centerline{\it
Department of Statistics, The Chinese University of Hong Kong${}^1$}

\centerline{\it
Department of Statistics, Harvard University${}^2$}\vspace{.55cm} 
\fontsize{9}{11pt plus.8pt minus.6pt}\selectfont

\begin{quotation}
\noindent {\it Abstract:}
Multiple imputation (MI) inference 
handles missing data by imputing the missing values $m$ times, 
and then combining the results from the $m$ complete-data analyses.
However, the existing method for combining likelihood ratio tests (LRTs) 
has multiple defects: (i) the combined test statistic can be negative, but its null distribution is approximated by an $F$-distribution;  
(ii) it is not invariant to re-parametrization; 
(iii) it fails to ensure monotonic power owing to its use of an inconsistent estimator of the fraction of missing information (FMI) under the alternative hypothesis; and
(iv) it requires nontrivial access to the LRT statistic 
as a function of parameters instead of data sets.  
We show, using both theoretical derivations and empirical investigations,  
that essentially all of these problems can be straightforwardly addressed if we are willing to perform an additional LRT by stacking the $m$ completed data sets as one big completed data set.  
This enables users to implement the MI LRT without modifying the complete-data procedure.
A particularly intriguing finding is that the FMI can be estimated consistently by an LRT statistic for testing whether the $m$ completed data sets can be regarded effectively as samples coming from a common model. 
Practical guidelines are provided based on an extensive comparison of existing MI tests. 
Issues related to nuisance parameters are also discussed.

\vspace{9pt}
\noindent {\it Key words and phrases:}
Fraction of missing information, missing data, invariant test, monotonic power, robust estimation.
\par
\end{quotation}\par

\def\thefigure{\arabic{figure}}
\def\thetable{\arabic{table}}

\renewcommand{\theequation}{\thesection.\arabic{equation}}

\fontsize{11}{14pt plus.8pt minus .6pt}\selectfont


\lhead{\fancyplain{}MI Likelihood Ratio Tests}
\rhead[]{\thepage}

\section{Historical Successes and Failures}\label{sec:history}
\subsection{The Need for Multiple Imputation Likelihood-Ratio Tests}\label{sec:mult}
Missing-data problems are ubiquitous in practice, to the extent that the absence of any missingness 
is often a strong indication that the data have been pre-processed or manipulated in some way \citep[e.g.,][]{blocker2013potential}. 
Multiple imputation (MI) \citep{rubin1978,rubin2004multiple} 
has been a preferred method, especially by those who are ill-equipped to handle missingness on their own, owing to a lack of information or skills or resources. 
MI relies on the data collector (e.g., a census bureau) building a reliable imputation model to fill in the missing data $m (\ge 2)$ times. In this way, users can apply their preferred software or procedures
designed for complete data, and do so $m$ times. MI inference is then performed by appropriately 
combining these $m$ complete-data results. Note that in a typical analysis of public MI data, the analyst has no control over or understanding of how the imputation was done, including the choice of the model and $m$, which is often small in reality (e.g.,  $3\le m \le 10$). The analyst's job is to analyze the given $m$ completed data sets as appropriately as possible, but only using complete-data procedures or software routines.

Although MI was designed initially for 
public-use data sets, over the years, it has become a method of choice in general, because it separates 
handling the missingness from the analysis \citep[e.g.,][]{tu1993JASA,rubin1996multiple,rubin2004multiple,Schafer1999,KingHonakerJosephScheve2001,PeughEnders2004,KenwardCarpenter2007,RoseFraser2008,HolanTothFerreiraKarr2010,KimYang2017}. Software routines for performing MI are now available in 
R \citep{SuGelmanHillYajima2011}, 
Stata \citep{RoystonWhite2011},
SAS \citep{SAS_MI}, and SPSS; 
see \cite{HarelZhou2007} and \cite{HortonKleinman2007} 
for summaries.

This convenient separation, 
however, creates an issue of uncongeniality, that is, an incompatibility between the imputation model and the subsequent analysis procedures \citep{meng1994}.  This issue is examined in detail by \citet{XXMeng2017}, who show that uncongeniality is easiest to deal with when the imputer's model is more saturated than the user's model/procedure, and when the user is conducting an efficient analysis, such as a likelihood inference. 
Therefore, this study focuses on conducting MI likelihood ratio tests (LRTs), assuming the imputation model is sufficiently saturated to render the common assumptions 
made in the literature about conducting LRTs with MI valid. 

Like many hypothesis testing procedures in common practice, the exact null distributions of various MI test statistics, LRTs or not, are intractable. This intractability is not computational, but rather statistical,
owing to the well-known issue of a nuisance parameter, that is, the lack of a pivotal quantity, as highlighted by the Behrens--Fisher problem \citep{wallace1980behrens}.  Indeed, the nuisance parameter in the MI context is the so-called ``fraction of missing information" (FMI),  which is determined by the ratio of the between-imputation variance to the within-imputation variance (and its multi-variate counterparts).
Hence, the challenge we face is almost identical to the one faced by the Behrens--Fisher problem, as shown in \cite{meng1994posterior}. Currently the most successful strategy has been to reduce the number of nuisance parameters to one by assuming an equal fraction of missing information (EFMI), a strategy we 
follow as well because our simulation study indicates that it achieves a better compromise between type-I and type-II errors than other strategies we (and others) have tried.

An added challenge in the MI context is that the user's complete-data procedures can be very restrictive. Please update as follows: 
What is available to the user could vary from the entire likelihood function to point estimators (such as the MLE and Fisher information) and to a single $p$-value. 
Therefore, there have been a variety of procedures proposed in the literature, depending on what quantities we assume the user has access to, as we 
review shortly. 

Among them, a promising idea is to directly combine LRT statistics. However, the current execution of this idea \citep{mengRubin92} relies too heavily on the asymptotic equivalence (in terms of the data size, not the number of imputations, $m$) between the LRT and Wald test \textit{under the null}. Its asymptotic validity, unfortunately, does not protect it from quick deterioration for small data sizes, such as delivering a negative ``$F$ test statistic" or FMI.  Worst of all, the test can have essentially zero power because the estimator of the FMI can be badly inconsistent under some alternative hypotheses. The combining rule of \citet{mengRubin92} also requires access to the LRT as a function of parameter values, not just as a function of the data. 
The former is often unavailable from standard software packages. This defective MI LRT, however, has been adopted by textbooks \citep[e.g.,][]{vanBuurenS2012,KimShao2013} and popular software, for example, 
the function \texttt{pool.compare} in the R package \texttt{mice} \citep{vanBuurenGroothuisOudshoorn2011}, 
the function \texttt{testModels} in the R package \texttt{mitml} \citep{GrundRobitzschLuedtke2017}, 
and the function \texttt{milrtest} \citep{Medeiros2008} in the Stata module \texttt{mim} \citep{CarlinGalatiRoyston2008}. 

To minimize the negative impact of this defective LRT test, this study derives MI LRTs that are free of these defects, as detailed in Section~\ref{subsec:defect}. We achieve this mainly by switching the order of two main operators in the combining rule of \citet{mengRubin92}: we maximize the average of the $m$ log-likelihoods instead of averaging their maximizers. This switch, guided by the likelihood principle, renders positivity, invariance, and monotonic power. Other judicious uses of the likelihood functions permit us to overcome the remaining defects.

\subsection{Summary of the Major Findings}
Our major contributions are four-fold:
\begin{itemize}
	\item In terms of statistical principles, 
	        we propose switching the order of two operations, 
	        namely maximization and averaging, 
			in the existing MI LRT statistic, as suggested by
			the likelihood principle.
	        This operation  
	        retrieves the non-negativity and invariance to the re-parametrization of the MI statistic.
	\item In terms of theoretical properties, 
	        a new estimator of the fraction of missing information is proposed. 
	    	It is consistent, regardless of the validity of the null hypothesis,  
			so that the proposed test is monotonically powerful 
			with respect to the discrepancy between the null and alternative hypotheses. 
	\item In terms of computational properties, 
	        the proposed test only requires that users 
			have a standard subroutine for performing a complete-data LRT.
			Thus, unlike the existing MI LRT, 
			users do not need to modify the subroutine 
			in order to evaluate the likelihood function at arbitrary parameter values.
	\item In terms of practical impact,  the proposed test can be implemented easily to     
			replace the flawed MI LRT procedures in the aforementioned software packages and beyond. 
			It immediately resolves the issue of returning a negative $F$-test value.
			In addition, the power loss due to the flaws in the MI LRT procedure
			can be retrieved.
\end{itemize}

The remainder of Section~\ref{sec:history} provides background and notation. 
Section~\ref{sec:improved_MI_LRT} 
discusses the defects of the existing MI LRT and our remedies. 
Section~\ref{sec:comp} investigates the computational requirements,  
including theoretical considerations and comparisons. In particular, Algorithm~\ref{algo:MI_LRT_rob} of Section~\ref{sec:compFeasibleComRule}
computes our most recommended test. 
Section~\ref{sec:example} provides empirical evidence.  
Section~\ref{sec:conclusion} concludes the paper. 
Appendices \ref{supp:results} and \ref{sec:proof} provide
additional investigations, real-life data examples, and proofs.

\subsection{Notation and Complete-Data Tests}\label{sec:review}
Let $X_{\obs}$ and $X_{\mis}$
be, respectively, the observed and missing parts of an intended 
complete data set $X=X_{\com}=\{X_{\obs},X_{\mis}\}$
consisting of $n$ observations. 
Denote the sampling model --- probability or density, depending on the data type --- of $X$ by 
$f(\cdot \mid \psi)$, where $\psi\in\Psi\subseteq\mathbb{R}^h$ is a vector of parameters.
Suppose that we are interested in inferring 
$\theta=\theta(\psi) \in\Theta \subseteq\mathbb{R}^k$, 
which is expressed as a function of $\psi$. 
This definition of $\theta$ is very general. 
For example, $\theta$ can be a sub-vector of $\psi = (\theta^{\T}, \eta^{\T})^{\T}$, 
or a transformation (not necessarily one-to-one) of $\psi$; 
see Section 4.4 of \cite{serfling1980} and Section 6.4.2 of \cite{shao98}.

The goal is to test $H_0:\theta=\theta_0$ when only $X_{\obs}$ is available, 
where $\theta_0$ is a specified vector. 
For example, if $H_0$ puts a $k$-dimensional restriction $R(\psi)=\bm{0}$ on the model parameter $\psi$, 
then $\theta=R(\psi)$ and $\theta_0 = \bm{0}$.
For simplicity, we focus on a two-sided alternative, 
but our approach adapts to general 
LRTs.
Here, we assume $X_{\obs}$ is rich enough that the missing data mechanism is ignorable \citep{rubin1976}, 
or it has been properly incorporated 
by the imputer, 
who may have access to additional confidential data. 

Let $\widehat{\theta}=\widehat{\theta}(X)$, $\widehat{\psi}=\widehat{\psi}(X)$, and 
$\widehat{\psi}_0=\widehat{\psi}_0(X)$ be 
the complete-data MLE of $\theta$, 
complete-data MLE of $\psi$, and
$H_0$-constrained complete-data MLE of $\psi$, respectively.
Furthermore, let  
$U = U_{\theta}=U_{\theta}(X)$ and  $U_{\psi}=U_{\psi}(X)$ be
efficient estimators of $\Var(\widehat{\theta})$ and $\Var(\widehat{\psi})$, respectively,
for example, the inverse of the observed Fisher information. 
Common test statistics for $H_0$
include the Wald statistic $D_{\wt} = d_{\wt}(\widehat{\theta},U)/k$ 
and the LRT statistic $D_{\lrt} = d_{\lrt}(\widehat{\psi}_0,\widehat{\psi}\mid X)/k$, 
where
\begin{eqnarray*}
	d_{\wt}(\widehat{\theta},U) 
		= (\widehat{\theta}-\theta_0)^\T U^{-1} (\widehat{\theta}-\theta_0), 
	\qquad
	d_{\lrt}(\widehat{\psi}_0,\widehat{\psi}\mid X) 
		= 2 \log \frac{ f(X\mid\widehat{\psi})}{f(X\mid\widehat{\psi}_0)}.
\end{eqnarray*}
Under regularity conditions, such as those in Section~4.2.2 and Section~4.4.2 of \citet*{serfling1980},
we have the following classical results.

\begin{property}\label{prop:asyEquiv_LRT_Wald}  
Under $H_0$, 
(i) $D_{\wt} \inD \chi^2_k/k$ and $D_{\lrt}\inD \chi^2_k/k$; and 
(ii) $n(D_{\wt}-D_{\lrt}) \inP 0$ as $n\rightarrow\infty$,  
where ``$\inD$'' and ``$\inP$'' denote  
convergence in distribution and in probability, respectively. 
\end{property}

Testing $H_0$ based on 
$X_{\obs}$ is more involved. 
For MI, let $X^{(\ell)} = \{X_{\obs},X_{\mis}^{(\ell)}\}$, for $\ell=1,\ldots,m$,  
be the $m$ completed data sets,  where 
$X_{\mis}^{(\ell)}$
are drawn 
from a proper imputation model \citep{rubin2004multiple}.  
We then carry out a complete-data estimation or testing procedure on $X^{(\ell)}$, 
for $\ell =1, \ldots, m$, resulting in a set of $m$ quantities. 
The so-called MI inference combines them to obtain a single answer. 
Note that the setting of MI is such that the user is unable or unwilling to carry out the test based directly on the observed data $X_{\obs}$.

\subsection{MI Wald Test and Fraction of Missing Information}
Let $d^{(\ell)}_{\wt} = d_{\wt}(\widehat{\theta}^{(\ell)},U^{(\ell)})$,
$\widehat{\theta}^{(\ell)} = \widehat{\theta}(X^{(\ell)})$,
and $U^{(\ell)} = U(X^{(\ell)})$
be the imputed counterparts of $d_{\wt}(\widehat{\theta},U)$, $\widehat{\theta}$, and $U$,
respectively, for each $\ell$.
In addition, let
\begin{eqnarray}\label{eq:ddd}
	\overline{d}_{\wt} = \frac{1}{m} \sum_{\ell=1}^m d^{(\ell)}_{\wt} ,
	\qquad
	\overline{\theta} = \frac{1}{m} \sum_{\ell=1}^{m}\widehat{\theta}^{(\ell)} ,
	\qquad
	\overline{U} = \frac{1}{m} \sum_{\ell=1}^m U^{(\ell)} .
\end{eqnarray}
Under congeniality \citep{meng1994}, one can show that asymptotically \citep*{rubinSchenker1986} $\Var(\overline{\theta})$ can be consistently estimated by 
\begin{eqnarray}
T = \overline{U} + (1+1/m)B,\quad 
\text{where}\quad  B = \frac{1}{m-1}\sum_{\ell=1}^m (\widehat{\theta}^{(\ell)}-\overline{\theta})(\widehat{\theta}^{(\ell)}-\overline{\theta})^\T
\end{eqnarray}
is known as the \textit{between-imputation variance}, in contrast to $\bar U$ in (\ref{eq:ddd}), which measures the \textit{within-imputation variance}.  
Intriguingly, $2T$ serves as a universal (estimated) upper bound of $\Var(\overline{\theta})$ under uncongeniality \citep{XXMeng2017}. 
Under regularity conditions, we have that, as $m,n\rightarrow\infty$, 
\begin{eqnarray*}\label{eqt:lowerOrderVarUTB}
	n(\overline{U} - \mathcal{U}_\theta) \inP \mathbf{0} ,\qquad 
	n(T - \mathcal{T}_\theta) \inP \mathbf{0} ,\qquad 
	n(B - \mathcal{B}_\theta) \inP \mathbf{0},
\end{eqnarray*}
for some deterministic matrices $\mathcal{U}_\theta$, $\mathcal{T}_\theta$, and 
$\mathcal{B}_\theta= \mathcal{T}_\theta-\mathcal{U}_\theta$, 
where 
$\mathbf{0}$ denotes a matrix of zeros, and the subscript $\theta$ highlights that these matrices are 
for estimating $\theta$, 
because there are also corresponding $\mathcal{T}_\psi$, $\mathcal{B}_\psi$, and $\mathcal{U}_\psi$ for the entire parameter $\psi$. 
Similar to $\overline{U}$, $T$, and $B$,
we define $\overline{U}_{\psi}$, $T_{\psi}$, and $B_{\psi}$ for the parameter $\psi$.
If $\widehat{\theta}_{\com}$ and $\widehat{\theta}_{\obs}$ are the MLEs of $\theta$ 
based on $X_{\com}$ and $X_{\obs}$ (under congeniality), respectively, 
then $\mathcal{U}_{\theta} \bumpeq \Var(\widehat{\theta}_{\com})$
and $\mathcal{T}_{\theta} \bumpeq \Var(\widehat{\theta}_{\obs})$ 
as $n\rightarrow\infty$,
where $A_n \bumpeq B_n$ means that $A_n-B_n=o_p\{\min(A_n,B_n)\}$.
Note that the relation $A_n\bumpeq B_n$ means that the difference between $A_n$ and $B_n$
is of a smaller order than $A_n$ or $B_n$,
when both $A_n\geq 0$ and $B_n\geq 0$ approach zero. 
This notation (or its variants) is also used in, for example, 
\cite{mengRubin92}, \cite{li91JASA}, and \cite{KimShao2013}.

The straightforward MI Wald test  
$D_{\wt}(T) = d_{\wt}(\overline{\theta},T)/k$
is not practical because 
$T$ is singular when $m<k$ (usually $3\leq m \leq 10$). 
Even when it is not singular, 
it is usually not a very stable estimator of $\mathcal{T}_{\theta}$
because $m$ is small. 
To circumvent this problem, \cite{rubin1978} adopted the following assumption of 
an EFMI.

\begin{assumption}[EFMI of $\theta$]\label{ass:EFMI}
There is $\mathcal{r}\geq 0$ such that 
$\mathcal{T}_\theta = (1+\mathcal{r}) \mathcal{U}_\theta$.
\end{assumption}

EFMI is a strong assumption, implying that the missing data have caused an equal loss of information for estimating every component of $\theta$. However, as we shall see shortly, adopting 
this assumption \textit{for the purpose of hypothesis testing} is essentially the same as summarizing 
the impact of  (at least) $k$ nuisance parameters due to FMI by a single nuisance parameter, this is, 
the average FMI across different components. How well this reduction strategy works has a greater effect on the power of the test than on its validity, as long as we can construct an approximate null distribution that is more robust to the EFMI assumption. 
The issue of power turns out to be a rather tricky one, because without the reduction strategy, 
we also lose power when $m/k$ is small or even modest. 
This is because we simply do not have enough degrees of freedom to estimate all the nuisance parameters well or at all. We illustrate this point in Section~\ref{sec:MCexpUFMI}. 
(To clarify some confusion in literature, $\mathcal{r}$ in Assumption~\ref{ass:EFMI} is the \textit{odds of the missing information}, not the FMI, which is
$\mathcal{f}=\mathcal{r}/(1+\mathcal{r})$.)
We also denote $\mathcal{r}_m = (1+1/m)\mathcal{r}$ as the 
finite-$m$ adjusted value of $\mathcal{r}$.

Under EFMI, \cite{rubin2004multiple} replaced $T$ by $(1+\widetilde{r}_{\wt}^{\prime})\overline{U}$, where 
\begin{eqnarray}
	\widetilde{r}_{\wt}^{\prime}
		= \frac{(m+1)}{k(m-1)}( \overline{d}_{\wt}^{\prime}- \widetilde{d}_{\wt}^{\prime}) 
	\label{eqt:def_of_rm}, \qquad 
	\overline{d}_{\wt}^{\prime}
		= \frac{1}{m}\sum_{\ell=1}^m d_{\wt}(\widehat{\theta}^{(\ell)},\overline{U}),
\end{eqnarray}
$\widetilde{d}_{\wt}^{\prime}
		=  d_{\wt}(\overline{\theta},\overline{U})$, and the prime ``$\prime$'' 
indicates that $\overline{U}$ is used instead of individual $\{U^{(\ell)}\}_{\ell=1}^m$.
Then, a simple MI Wald test statistic \citep{rubin2004multiple} is
\begin{eqnarray}\label{eqt:def_of_DmApprox}
	\widetilde{D}_{\wt}^{\prime}
		= \frac{\widetilde{d}_{\wt}^{\prime}}{k(1+\widetilde{r}_{\wt}^{\prime})} . 
\end{eqnarray}
The intuition behind (\ref{eqt:def_of_rm})--(\ref{eqt:def_of_DmApprox}) is important 
because it forms the building blocks for virtually all the subsequent developments. 
The ``obvious'' Wald statistic $\widetilde{d}_{\wt}^{\prime}/k$ is too large 
(compared to the usual $\chi^2_k/k$), because it fails to take into account the missing information. 
The $(1+\widetilde{r}_{\wt}^{\prime})$ factor attempts to correct this, 
with the amount of correction determined by the  
amount of between-imputation variance relative to the within-imputation variance. 
This relative amount can be estimated by contrasting 
the average of individual Wald statistics and the Wald statistic based on 
an average of individual estimates,
as in (\ref{eqt:def_of_rm}). 
Using the difference between the ``average of functions'' and the ``function of average,'' namely,
\begin{eqnarray}\label{eq:agga}
	{\rm Ave}\{G(x)\} - G({\rm Ave}\{x\}),
\end{eqnarray}
is a common practice, for example, $G(x)=x^2$ for variance; see \citet{meng2002}.  

Because the exact null distribution of $\widetilde{D}_{\wt}^{\prime}$ 
is intractable,
\cite{li91JASA} proposed 
approximating it by 
$F_{k,\widetilde{\df}(\widetilde{r}_{\wt}^{\prime},k)}$, 
the $F$ distribution with degrees of freedom $k$ and 
$\widetilde{\df}(\widetilde{r}_{\wt}^{\prime},k)$,
where, denoting $K_m = k(m-1)$,   
\begin{eqnarray}\label{eqt:def_w_r}
	\widetilde{\df}(\mathcal{r}_m,k) 
		= \left\{ \begin{array}{ll}
				4+(K_m-4)\{1+(1-2/K_m)/\mathcal{r}_m\}^2, & \text{if $K_m>4$;} \\
				(m-1)(1+1/\mathcal{r}_m)^2(k+1)/2, & \text{otherwise}.
			\end{array}\right. 
\end{eqnarray}
In (\ref{eqt:def_w_r}), $n$ is assumed to be sufficiently large
so that the asymptotic $\chi^2$ distribution 
in Property \ref{prop:asyEquiv_LRT_Wald} can be used. 
If $n$ is small, the small sample degree of freedom in \cite{BarnardRubin1999} should be used.

\subsection{The Current MI Likelihood Ratio Test and Its Defect}\label{subsec:defect}
Let $d_{\lrt}^{(\ell)} = d_{\lrt}(\widehat{\psi}_{0}^{(\ell)},\widehat{\psi}^{(\ell)}\mid X^{(\ell)})$,
$\widehat{\psi}_{0}^{(\ell)} = \widehat{\psi}_0(X^{(\ell)})$ and 
$\widehat{\psi}^{(\ell)} = \widehat{\psi}(X^{(\ell)})$
be the imputed counterparts of 
$d_{\lrt}(\widehat{\psi}_0,\widehat{\psi}\mid X)$, $\widehat{\psi}_0$ and $\widehat{\psi}$, respectively, for each $\ell$.
Define
\begin{eqnarray}\label{eqt:def_psiHat_bardd}
	\qquad
	\overline{d}_{\lrt} = \frac{1}{m}\sum_{\ell=1}^m d_{\lrt}^{(\ell)}, \qquad
	\overline{\psi}_{0} = \frac{1}{m}\sum_{\ell=1}^m\widehat{\psi}_{0}^{(\ell)}, \qquad
	\overline{\psi} = \frac{1}{m}\sum_{\ell=1}^m\widehat{\psi}^{(\ell)}.
\end{eqnarray}
Similar to $\widetilde{r}_{\wt}^{\prime}$,
\cite{mengRubin92} proposed estimating $\mathcal{r}_m$ by 
\begin{eqnarray}\label{eqt:rL}
	\qquad
	\widetilde{r}_{\lrt} 
		= \frac{m+1}{k(m-1)} ( \overline{d}_{\lrt} - \widetilde{d}_{\lrt} ), \quad {\rm where} \quad
	\widetilde{d}_{\lrt} 
		= \frac{1}{m}\sum_{\ell=1}^m d_{\lrt}(\overline{\psi}_{0} ,\overline{\psi}\mid X^{(\ell)}),
\end{eqnarray}
and hence it is again in the form of (\ref{eq:agga}). The computation of $\widetilde{r}_{\lrt}$ requires 
that users have access to 
(i) a subroutine for $(X,\psi_0,\psi)\mapsto d_{\lrt}(\psi_0,\psi\mid X)$, and 
(ii) the estimates $\widehat{\psi}_{0}^{(\ell)}$ and $\widehat{\psi}^{(\ell)}$, 
rather than the matrices $\overline{U}$ and $B$.
Therefore, computing $\widetilde{r}_{\lrt}$ is easier than 
computing $\widetilde{r}_{\wt}^{\prime}$.
The resulting MI LRT is 
\begin{equation}\label{eq:mrlrt}
	\widetilde{D}_{\lrt} = \frac{\widetilde{d}_{\lrt}}{k(1+\widetilde{r}_{\lrt})} ,
\end{equation}
the null distribution of which can be approximated by $F_{k,\widetilde{\df}(\widetilde{r}_{\lrt},k)}$.
Its main theoretical justification (and motivation) 
is the asymptotic equivalence between the complete-data Wald test statistic and the LRT statistic \textit{under the null}, as stated in Property~\ref{prop:asyEquiv_LRT_Wald}.  
This equivalence permitted the replacement of $\overline{d}_{\wt}^{\prime}$ and $\widetilde{d}_{\wt}^{\prime}$ in (\ref{eqt:def_of_rm}) by $\overline{d}_{\lrt}$ and $\widetilde{d}_{\lrt}$, respectively, 
in (\ref{eqt:rL}). However, this is also where the problems lie.

First, with finite samples, $0\leq \widetilde{d}_{\lrt} \leq \overline{d}_{\lrt}$ is not guaranteed;
consequently, nor 
is $\widetilde{D}_{\lrt} \geq 0$ or $\widetilde{r}_{\lrt}\geq 0$. 
Because $\widetilde{D}_{\lrt}$ is referred to as an $F$ distribution
and $\widetilde{r}_{\lrt}$ estimates $\mathcal{r}_m\geq 0$,
clearly, negative values of $\widetilde{D}_{\lrt}$ or $\widetilde{r}_{\lrt}$ will cause trouble. 
Second, 
the MI LRT statistic $\widetilde{D}_{\lrt}$ is not invariant to 
re-parameterization of $\psi$, 
although invariance is a natural property of the standard LRT; see, for example, 
\citet{DagenaisDufour91}. 
This invariance principle 
is an appealing property because 
it requires that 
problems with the same formal structure should produce the same statistical results; 
see Chapter 6 of \citet{berger85} and Chapter 3.2 of \citet{LehmannCasella98}.
Formally, we say that $\varphi = g(\psi)$ is a re-parametrization of $\psi$
if $g$ is a bijective map. 
The classical LRT statistic is invariant to re-parametrization because
\[
	d_{\lrt}(\widehat{\psi}_{0},\widehat{\psi}\mid X)
		= d_{\lrt}(g^{-1}(\widehat{\varphi}_{0}),g^{-1}(\widehat{\varphi})\mid X),
\]
where $\widehat{\varphi}_{0}$ and $\widehat{\varphi}$ 
are the constrained and unconstrained MLEs, respectively, of $\varphi$ based on $X$.
However, the MI (pooled) LRT statistic $\widetilde{d}_{\lrt}$
no longer has this property because 
\[
	\sum_{\ell=1}^m d_{\lrt}(\overline{\psi}_{0},\overline{\psi}\mid X^{(\ell)})
		\not{=}\sum_{\ell=1}^m d_{\lrt}(g^{-1}(\overline{\varphi}_{0}),g^{-1}(\overline{\varphi})\mid X^{(\ell)}) 
\]
in general,
where $\widehat{\varphi}_{0}^{(\ell)}$ and $\widehat{\varphi}^{(\ell)}$
are the constrained and unconstrained MLEs, respectively, of $\varphi$ based on $X^{(\ell)}$, 
and 
$\overline{\varphi}_{0} = m^{-1} \sum_{\ell=1}^m \widehat{\varphi}_{0}^{(\ell)}$
and $\overline{\varphi} = m^{-1} \sum_{\ell=1}^m \widehat{\varphi}^{(\ell)}$.
Section \ref{sec:example} shows how the 
MI LRT results vary dramatically with parametrizations in finite samples.

Third, the estimator $\widetilde{r}_{\lrt}$ involves 
the estimators of $\psi$ under $H_0$, this is, $\widehat{\psi}_0^{(\ell)}$ and $\overline{\psi}_0$.
When $H_0$ fails, they may be inconsistent for $\psi$.
Thus, $\widetilde{r}_{\lrt}$ is no longer consistent for $\mathcal{r}_m$. 
A serious consequence is that the power of the test statistic $\widetilde{D}_{\lrt}$ is not guaranteed to monotonically increase as $H_1$ moves away from $H_0$. Indeed, 
our simulations (see Section~\ref{sec:comparison}) show that 
under certain parametrizations, 
the power may nearly vanish for obviously false $H_0$. 
Fourth, computing $\widetilde{d}_{\lrt}$ in (\ref{eqt:rL})
requires that users have access to  $\widetilde{\mathcal{D}}_{\lrt}$, 
a function of both data and parameters. 
However, in most software,  
the available function is $\mathcal{D}_{\lrt}$, a function of data only; that is,
\begin{eqnarray}
	\widetilde{\mathcal{D}}_{\lrt}:(X,\psi_0,\psi)\mapsto d_{\lrt}(\psi_0,\psi\mid X), \quad
	\mathcal{D}_{\lrt}:X\mapsto d_{\lrt}(\widehat{\psi}_0(X),\widehat{\psi}(X)\mid X). \label{eqt:computeCode}
\end{eqnarray}
It is not always feasible for users to  
write themselves a subroutine for computing 
$\widetilde{\mathcal{D}}_{\lrt}$.

In short, four problems need to be resolved: (i) the lack of non-negativity, 
(ii) the lack of invariance, (iii) the lack of consistency and power, 
and (iv) the lack of a feasible algorithm. 
Problems (i)--(iii) are resolved in Section~\ref{sec:improved_MI_LRT}; 
(iv) is resolved in Section~\ref{sec:comp}.

\section{Improved MI Likelihood Ratio Tests}\label{sec:improved_MI_LRT}
\kern -0.1in
\subsection{Invariant Combining Rule and Estimator of $\mathcal{r}_m$}\label{sec:correctInv} 
To derive a parametrization-invariant MI LRT, we replace 
$\widetilde{d}_{\lrt}$ by 
an asymptotically equivalent version  
that behaves like a standard LRT statistic.  
Let
\begin{equation}\label{eqt:jointDensity}
	\overline{\loglik}(\psi)
		= \frac{1}{m}\sum_{\ell=1}^m \loglik^{(\ell)}(\psi),
	\qquad \text{where} \quad
	\loglik^{(\ell)}(\psi)	
		= \log f(X^{(\ell)}\mid \psi) .
\end{equation}
Here, $\overline{\loglik}(\psi)$ is \textit{not} a real log-likelihood, 
because it does not properly model the completed data sets: $\mathbb{X}=\{X^1, \ldots,  X^m\}$ (e.g., all $X^\ell$ share the same $X_{\obs}$).  Nevertheless, $\overline{\loglik}(\psi)$ can be treated as a log-likelihood for computational purposes. In particular, we can maximize it to obtain 
\begin{equation}\label{eqt:MLEpsi_wrt_combinedLike}
	\widehat{\psi}_{0}^{*} = \widehat{\psi}_0^{*}(\mathbb{X})= \argmax_{\psi\in\Psi\;:\;\theta(\psi)=\theta_0} \overline{\loglik}(\psi) ,
	\qquad 
	\widehat{\psi}^{*} =\widehat{\psi}^{*}(\mathbb{X})= \argmax_{\psi\in\Psi} \overline{\loglik}(\psi).
\end{equation}
The corresponding log-likelihood ratio test statistic
is given by 
\begin{eqnarray}\label{eqt:bardd_combinedLike}
	\widehat{d}_{\lrt}=2 \left\{ \overline{\loglik}(\widehat\psi^*)-\overline{\loglik}(\widehat\psi_0^*) \right\}=
    \frac{1}{m} \sum_{\ell=1}^m d_{\lrt}(\widehat\psi_{0}^*,\widehat\psi^* \mid X^{(\ell)}).
\end{eqnarray}
Thus, in contrast to 
$\widetilde{d}_{\lrt}$ of (\ref{eqt:rL}),
$\widehat{d}_{\lrt}$ aggregates MI data sets by 
averaging the MI LRT functions, as in (\ref{eqt:jointDensity}), rather than 
averaging the MI test statistics and moments, as in (\ref{eqt:def_psiHat_bardd}).
Although $\sqrt{n}(\widehat{\psi}_{0}^{*} - \overline{\psi}_{0}) \inP \textbf{0}$ 
and $\sqrt{n}(\widehat{\psi}^{*} - \overline{\psi}) \inP \textbf{0}$
as $n\rightarrow\infty$ for each $m$, 
only $\widehat{d}_{\lrt}$, not $\widetilde{d}_{\lrt}$,
is guaranteed to be non-negative and invariant to parametrization of $\psi$ for all $m,n$.
Indeed, the likelihood principle guides us to consider averaging individual log-likelihoods 
rather than individual MLEs, because the former has a much better chance 
of capturing the functional features of the real log-likelihood than any of their (local) maximizers can. 

To derive the properties of $\widehat{d}_{\lrt}$, we need the usual regularity conditions on the MLE and MI.  

\begin{assumption}\label{ass:likelihood}
The sampling model $f(X\mid\psi)$ satisfies the following: 
\begin{enumerate}[noitemsep]
	\item[(a)] The map $\psi\mapsto\underline{\loglik}(\psi)= n^{-1}\log f(X\mid \psi)$ 
					is twice continuously differentiable. 
	\item[(b)] The complete-data MLE $\widehat{\psi}(X)$ is the unique solution of 
					$\partial \underline{L}(\psi)/\partial \psi = \bm{0}$.
	\item[(c)] Let $\underline{I}(\psi) = -\partial^2 \underline{\loglik}(\psi) / \partial \psi \partial \psi^{\T}$; 
					then, for each $\psi$, there exists a positive-definite matrix 
					$\underline{\mathcal{I}}(\psi) = \mathcal{U}_{\psi}^{-1}$ such that 
					$\underline{I}(\psi) \inP \underline{\mathcal{I}}(\psi)$
					as $n\rightarrow\infty$. 
	\item[(d)] The observed-data MLE $\widehat{\psi}_{\obs}$ of $\psi$ obeys
					\begin{equation}\label{eqt:psiObsHatNormal}
						\left[ \mathcal{T}_{\psi}^{-1/2}\left(\widehat{\psi}_{\obs} 
								-\psi \right) \bigg| \psi  \right]  \inD \mathcal{N}_h(\bm{0},I_h)
					\end{equation}
					as $n\rightarrow\infty$,
					where $I_h$ is the $h\times h$ identity matrix.
\end{enumerate}
\end{assumption}

\begin{assumption}\label{ass:properImpModel}
The imputation model is proper \citep*{rubin2004multiple}:
\begin{gather}
	\left[\mathcal{B}_{\psi}^{-1/2}\left( \widehat{\psi}^{(\ell)} - \widehat{\psi}_{\obs} \right) \bigg| X_{\obs} \right]
	\inD \mathcal{N}_h(\bm{0} ,I_h), \label{eqt:psiellHatNormal} \\
	\left[ \mathcal{T}_{\psi}^{-1}\left(U^{(\ell)}_{\psi} - \mathcal{U}_{\psi}\right) \bigg| X_{\obs} \right]\inP \bm{0} , \qquad
	\left[ \mathcal{T}_{\psi}^{-1}\left(B_{\psi} - \mathcal{B}_{\psi}\right) \bigg| X_{\obs} \right]\inP \bm{0} 
	\label{eqt:lowerOrderVarAss}
\end{gather}
independently for each $\ell$, as $n\rightarrow\infty$, 
provided that $\mathcal{B}_{\psi}^{-1}$ is well defined. 
\end{assumption}

Assumption \ref{ass:likelihood} holds
under the usual regularity conditions that guarantee the normality and consistency of MLEs.
When $X^{(1)}_{\mis},\ldots,X^{(m)}_{\mis}$
are drawn independently from a (correctly 
specified) posterior predictive distribution $f(X_{\mis}\mid X_{\obs})$,
Assumption \ref{ass:properImpModel} is typically satisfied.
Clearly,
we can replace $\psi$ by its sub-vector $\theta$ in Assumptions \ref{ass:likelihood} 
and \ref{ass:properImpModel}. These $\theta$-version assumptions are sufficient to guarantee the validity of
Theorem~\ref{thm:robEstofR} and Corollary~\ref{coro:finitenessOfr}.   
For simplicity, Assumption~\ref{ass:EFMI}, the
$\theta$-version of Assumptions~\ref{ass:likelihood} and \ref{ass:properImpModel}, 
and the conditions that guarantees  
Property \ref{prop:asyEquiv_LRT_Wald}
are collectively written as $\RC_\theta$ (RC denotes ``regularity conditions''),
which are commonly assumed for MI inference.

\begin{theorem}\label{thm:AsyEq_dBar_LL}
Assume $\RC_\theta$.
Under $H_0$, we have 
(i) $\widehat{d}_{\lrt}\geq 0$ for all $m,n$; 
(ii) $\widehat{d}_{\lrt}$ is invariant to parametrization of $\psi$ for all $m,n$; and 
(iii) $\widehat{d}_{\lrt}\bumpeq \widetilde{d}_{\lrt}$
as $n\rightarrow\infty$ for each $m$.
\end{theorem}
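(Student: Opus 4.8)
The plan is to treat $\overline{\loglik}(\psi)$ as an ordinary log-likelihood for all algebraic purposes and to exploit two exact identities: $\widehat{d}_{\lrt}=2\{\overline{\loglik}(\widehat\psi^*)-\overline{\loglik}(\widehat\psi_0^*)\}$ (equation~\eqref{eqt:bardd_combinedLike}), and $\widetilde{d}_{\lrt}=2\{\overline{\loglik}(\overline\psi)-\overline{\loglik}(\overline\psi_0)\}$, the latter obtained from~\eqref{eqt:rL} by writing $d_{\lrt}(\overline\psi_0,\overline\psi\mid X^\ell)=2\{\loglik^\ell(\overline\psi)-\loglik^\ell(\overline\psi_0)\}$ and averaging over $\ell$. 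Thus the two statistics differ only in whether $\overline{\loglik}$ is evaluated at its own constrained/unconstrained maximizers $(\widehat\psi_0^*,\widehat\psi^*)$ or at the componentwise averages $(\overline\psi_0,\overline\psi)$ of the individual MLEs. Part~(i) is then immediate: $\widehat\psi_0^*$ maximizes $\overline{\loglik}$ over $\{\psi\in\Psi:\theta(\psi)=\theta_0\}\subseteq\Psi$, over which $\widehat\psi^*$ is the maximizer, so $\overline{\loglik}(\widehat\psi^*)\ge\overline{\loglik}(\widehat\psi_0^*)$ and $\widehat{d}_{\lrt}\ge0$ for every $m,n$ (regardless of $H_0$). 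Part~(ii) is the usual invariance of likelihood ratios, applied to the pseudo-likelihood: for a bijection $\varphi=g(\psi)$, the averaged log-likelihood in the $\varphi$-parametrization is $\varphi\mapsto\overline{\loglik}(g^{-1}(\varphi))$, whose unconstrained maximizer is $g(\widehat\psi^*)$ and whose maximizer over the transformed constraint set $\{\varphi:\theta(g^{-1}(\varphi))=\theta_0\}$ is $g(\widehat\psi_0^*)$; the attained maxima are unchanged, hence so is $\widehat{d}_{\lrt}$.

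For part~(iii) --- the substantive claim --- the first step is to show $\sqrt{n}(\widehat\psi^*-\overline\psi)\inP\mathbf{0}$ and $\sqrt{n}(\widehat\psi_0^*-\overline\psi_0)\inP\mathbf{0}$ under $H_0$. Under $\RC_\theta$ each individual MLE admits the standard expansion $\widehat\psi^\ell-\psi^\star=\underline{\mathcal{I}}(\psi^\star)^{-1}n^{-1}\nabla\loglik^\ell(\psi^\star)+o_p(n^{-1/2})$, while $\widehat\psi^*$, solving $\nabla\overline{\loglik}(\psi)=\mathbf{0}$, satisfies $\widehat\psi^*-\psi^\star=\underline{\mathcal{I}}(\psi^\star)^{-1}n^{-1}\nabla\overline{\loglik}(\psi^\star)+o_p(n^{-1/2})$; averaging the former over $\ell$ reproduces exactly this leading term, so the two displacements agree to $o_p(n^{-1/2})$. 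The constrained estimators are handled the same way after adjoining the Lagrange-multiplier conditions for $\theta(\psi)=\theta_0$; under $H_0$ the constrained MLEs are consistent for $\psi^\star$, the linearized constraint is common to every $\ell$ and to the averaged criterion, and the same cancellation occurs. The second step is a Taylor expansion of $\overline{\loglik}$ about its maximizers, where its gradient vanishes: $\overline{\loglik}(\widehat\psi^*)-\overline{\loglik}(\overline\psi)=\tfrac12 n(\widehat\psi^*-\overline\psi)^\T\{m^{-1}\sum_\ell\underline{I}^\ell(\check\psi)\}(\widehat\psi^*-\overline\psi)$ for some intermediate $\check\psi$; by Assumption~\ref{ass:likelihood}(c) and consistency the middle factor equals $\mathcal{U}_\psi^{-1}+o_p(1)$, so the whole expression is $n\cdot o_p(n^{-1/2})\cdot O_p(1)\cdot o_p(n^{-1/2})=o_p(1)$, and the constrained term is bounded identically. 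Hence $\widehat{d}_{\lrt}-\widetilde{d}_{\lrt}=o_p(1)$.

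It remains to upgrade $o_p(1)$ to $\bumpeq$, i.e. $\widehat{d}_{\lrt}-\widetilde{d}_{\lrt}=o_p(\widehat{d}_{\lrt}+\widetilde{d}_{\lrt})$. Here I would note that under $H_0$, $\sqrt{n}(\overline\psi-\overline\psi_0)=\sqrt{n}(\widehat\psi^*-\widehat\psi_0^*)+o_p(1)$ converges in distribution --- by Assumptions~\ref{ass:likelihood} and~\ref{ass:properImpModel} --- to a nondegenerate Gaussian vector $W$, whence (using the same Taylor expansion, now about $\widehat\psi^*$) $\widehat{d}_{\lrt}\inD W^\T\mathcal{U}_\psi^{-1}W=:Z$, and likewise $\widetilde{d}_{\lrt}\inD Z$. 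Since $Z$ is a nondegenerate Gaussian quadratic form, $P(Z=0)=0$ and $P(Z\le\delta)\to0$ as $\delta\downarrow0$, so for any $\varepsilon>0$ one can pick $\delta>0$ with $\limsup_n P(\widehat{d}_{\lrt}+\widetilde{d}_{\lrt}\le\delta)<\varepsilon$, and on the complementary event $|\widehat{d}_{\lrt}-\widetilde{d}_{\lrt}|/(\widehat{d}_{\lrt}+\widetilde{d}_{\lrt})\le\delta^{-1}|\widehat{d}_{\lrt}-\widetilde{d}_{\lrt}|\inP0$. This yields $\widehat{d}_{\lrt}\bumpeq\widetilde{d}_{\lrt}$.

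I expect the main obstacle to be the constrained M-estimator expansion in the first step of part~(iii): one must verify that the Lagrange-multiplier contributions to the $m$ individual constrained MLEs $\widehat\psi_0^\ell$ and the single set of multipliers for the averaged criterion $\widehat\psi_0^*$ produce leading terms that coincide up to $o_p(n^{-1/2})$ --- intuitively clear because the constraint manifold and its linearization at $\psi^\star$ are shared, but requiring care with the geometry when $\theta$ is a nonlinear function of $\psi$. The remaining ingredient, propagating an $o_p(n^{-1/2})$ displacement through a quadratic form scaled by $n$, is routine, and parts~(i) and~(ii) are essentially immediate from the variational characterization of $(\widehat\psi_0^*,\widehat\psi^*)$.
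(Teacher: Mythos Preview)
Your treatment of (i) and (ii) is essentially identical to the paper's. For (iii) you take a genuinely different route. The paper does not compare $\widehat{d}_{\lrt}$ and $\widetilde{d}_{\lrt}$ directly; instead it exploits the parametrization invariance of $\widehat{d}_{\lrt}$ to pass (without loss of generality) to an orthogonal parametrization $\psi=(\theta,\eta)$ with $\Cov(\widehat\theta^\ell,\widehat\eta^\ell)\bumpeq\mathbf{0}$, Taylor-expands $\overline{\loglik}$ about $\widehat\psi^*$, and uses the block-diagonal Hessian plus the Cox--Reid fact $\widehat\eta(\theta_0)-\widehat\eta(\widehat\theta^*)=O_p(1/n)$ to show $\widehat{d}_{\lrt}\bumpeq(\overline\theta-\theta_0)^\T\overline U_\theta^{-1}(\overline\theta-\theta_0)=\widetilde d_{\wt}'$; the link to $\widetilde d_{\lrt}$ then comes from the known equivalence $\widetilde d_{\wt}'\bumpeq\widetilde d_{\lrt}$ in \citet{mengRubin92}. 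Your approach is more self-contained: you observe that $\widehat d_{\lrt}$ and $\widetilde d_{\lrt}$ are the same functional $2\{\overline{\loglik}(\cdot)-\overline{\loglik}(\cdot)\}$ evaluated at $(\widehat\psi^*,\widehat\psi_0^*)$ versus $(\overline\psi,\overline\psi_0)$, and close the gap by $\sqrt{n}$-equivalence of these pairs. This avoids the Wald intermediary and the Cox--Reid reparametrization altogether; the paper's detour, however, yields $\widehat d_{\lrt}\bumpeq\widetilde d_{\wt}'$ as a by-product, which it reuses in later proofs (e.g.\ of Lemma~\ref{thm:distHatrPos}).

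Two small points on your argument. First, in your second step you write ``about its maximizers, where its gradient vanishes'' and claim the constrained term is bounded identically, but $\nabla\overline{\loglik}(\widehat\psi_0^*)\neq\mathbf{0}$: it equals $J^\T\lambda$ with $J=\nabla\theta(\widehat\psi_0^*)$ and Lagrange multiplier $\lambda=O_p(\sqrt n)$ under $H_0$. The linear term $\lambda^\T J(\overline\psi_0-\widehat\psi_0^*)$ must therefore be bounded separately; it is indeed $o_p(1)$ because $J(\overline\psi_0-\widehat\psi_0^*)\approx\theta(\overline\psi_0)-\theta_0=O_p(1/n)$ (averaging points on the constraint surface moves off it only to second order), but this step deserves a line. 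Second, in your upgrade to $\bumpeq$, the vector $\sqrt n(\widehat\psi^*-\widehat\psi_0^*)$ is \emph{degenerate} in the nuisance directions (this is exactly the Cox--Reid fact the paper uses), so $W$ is not a nondegenerate Gaussian; what you actually need, and what holds, is that the limit $Z$ of $\widehat d_{\lrt}$ is a continuous nonnegative random variable with $P(Z\le\delta)\to0$ as $\delta\downarrow0$, which follows from its being a positive-definite quadratic form in the $k$ nondegenerate components. With these two adjustments your proof is complete.
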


Consequently, an improved combining rule is defined as
\begin{equation}\label{eqt:final_proposal}
	\widehat{D}_{\lrt}(\mathcal{r}_m) = \frac{\widehat{d}_{\lrt}}{k(1+\mathcal{r}_m)},
\end{equation}
for a given value of $\mathcal{r}_m$.
The forms of (\ref{eqt:def_of_DmApprox}) and (\ref{eq:mrlrt}) follow. 
Using $\widehat{d}_{\lrt}$ in (\ref{eqt:bardd_combinedLike}),
we can modify $\widetilde{r}_{\lrt}$ in (\ref{eqt:rL}) to provide 
a potentially better estimator: 
\begin{equation}\label{eqt:rLLpos}
	\widehat{r}_{\lrt} = \frac{m+1}{k(m-1)} ( \overline{d}_{\lrt} - \widehat{d}_{\lrt} ).
\end{equation}
Although $\widehat{d}_{\lrt}\geq 0$ is guaranteed by our construction, $\widehat{r}_{\lrt}\geq 0$ does not hold in general for a finite $m$. However, it is guaranteed in the following situation. 

\begin{proposition}\label{prop:condForPositiveR}
Write $\psi=(\theta^{\T}, \eta^{\T})^{\T}$, 
where $\eta$ represents a nuisance parameter that is distinct from $\theta$. If there exist functions $\loglik_{\dag}$ and $\loglik_{\ddag}$ 
such that, for all $X$, 
the log-likelihood function 
$\loglik(\psi\mid X) = \log f(X\mid\psi)$
is of the form $\loglik(\psi\mid X)= \loglik_{\dag}(\theta \mid X) + \loglik_{\ddag}(\eta\mid X)$,
then $\widehat{r}_{\lrt}\geq 0$ for all $m,n$.
\end{proposition}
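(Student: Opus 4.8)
The plan is to exploit the additive separation $\loglik(\psi\mid X)=\loglik_{\dag}(\theta\mid X)+\loglik_{\ddag}(\eta\mid X)$ to strip the nuisance parameter out of both $\overline{d}_{\lrt}$ and $\widehat{d}_{\lrt}$, reducing each to a ``$\theta$-part'' log-likelihood ratio; the difference $\overline{d}_{\lrt}-\widehat{d}_{\lrt}$ should then collapse to a manifestly nonnegative quantity, namely an average of individual maxima minus the value of the averaged objective at a single point.

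First I would record what the separation does to the maximizers. Writing $\psi=(\theta^{\T},\eta^{\T})^{\T}$, the two summands of $\loglik$ depend on disjoint coordinates, so the unconstrained complete-data MLE on $X^{\ell}$ splits as $\widehat{\psi}^{\ell}=(\widehat{\theta}^{\ell\T},\widehat{\eta}^{\ell\T})^{\T}$, with $\widehat{\theta}^{\ell}=\argmax_{\theta}\loglik_{\dag}(\theta\mid X^{\ell})$ and $\widehat{\eta}^{\ell}=\argmax_{\eta}\loglik_{\ddag}(\eta\mid X^{\ell})$; while the $H_0$-constrained MLE is $\widehat{\psi}_{0}^{\ell}=(\theta_0^{\T},\widehat{\eta}^{\ell\T})^{\T}$ with the \emph{same} $\widehat{\eta}^{\ell}$, since fixing $\theta=\theta_0$ leaves the $\loglik_{\ddag}$ block untouched. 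Hence $\loglik_{\ddag}$ cancels in each $d_{\lrt}^{\ell}$ and I would obtain
\[
	\overline{d}_{\lrt}=\frac{2}{m}\sum_{\ell=1}^{m}\bigl\{\loglik_{\dag}(\widehat{\theta}^{\ell}\mid X^{\ell})-\loglik_{\dag}(\theta_0\mid X^{\ell})\bigr\}.
\]
Next I would note that the pooled objective $\overline{\loglik}$ of (\ref{eqt:jointDensity}) inherits the additive form $\overline{\loglik}(\psi)=\overline{\loglik}_{\dag}(\theta)+\overline{\loglik}_{\ddag}(\eta)$ with $\overline{\loglik}_{\dag}(\theta)=m^{-1}\sum_{\ell}\loglik_{\dag}(\theta\mid X^{\ell})$, so $\widehat{\psi}^{*}$ and $\widehat{\psi}_{0}^{*}$ of (\ref{eqt:MLEpsi_wrt_combinedLike}) split in exactly the same way and share the common component $\widehat{\eta}^{*}=\argmax_{\eta}\overline{\loglik}_{\ddag}(\eta)$; hence (\ref{eqt:bardd_combinedLike}) gives $\widehat{d}_{\lrt}=2\{\overline{\loglik}_{\dag}(\widehat{\theta}^{*})-\overline{\loglik}_{\dag}(\theta_0)\}$.

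Subtracting, the $\theta_0$ contributions match because $m^{-1}\sum_{\ell}\loglik_{\dag}(\theta_0\mid X^{\ell})=\overline{\loglik}_{\dag}(\theta_0)$, leaving
\[
	\overline{d}_{\lrt}-\widehat{d}_{\lrt}=2\Bigl\{\frac{1}{m}\sum_{\ell=1}^{m}\loglik_{\dag}(\widehat{\theta}^{\ell}\mid X^{\ell})-\overline{\loglik}_{\dag}(\widehat{\theta}^{*})\Bigr\}.
\]
This is nonnegative: for every fixed $\theta$, and in particular $\theta=\widehat{\theta}^{*}$, one has $\loglik_{\dag}(\widehat{\theta}^{\ell}\mid X^{\ell})\ge\loglik_{\dag}(\theta\mid X^{\ell})$ by definition of $\widehat{\theta}^{\ell}$, and averaging over $\ell$ gives $m^{-1}\sum_{\ell}\loglik_{\dag}(\widehat{\theta}^{\ell}\mid X^{\ell})\ge\overline{\loglik}_{\dag}(\widehat{\theta}^{*})$. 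Since the prefactor $(m+1)/\{k(m-1)\}$ in (\ref{eqt:rLLpos}) is positive for $m\ge2$, this would establish $\widehat{r}_{\lrt}\ge0$ for all $m,n$.

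I do not expect a substantive obstacle. The one place requiring care is the bookkeeping that the $\eta$-block drops out identically from all four maximizations $\widehat{\psi}^{\ell},\widehat{\psi}_{0}^{\ell},\widehat{\psi}^{*},\widehat{\psi}_{0}^{*}$ — this is exactly what the ``distinct nuisance parameter'' hypothesis, together with a product parameter space on which $\eta$ ranges freely when $\theta$ is held at $\theta_0$, supplies. It is also this separability that is absent in general: without it $\overline{d}_{\lrt}-\widehat{d}_{\lrt}$ is a difference of two differences carrying no forced sign, which is precisely why the conclusion genuinely needs the hypothesis of the proposition.
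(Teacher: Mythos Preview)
Your proof is correct and follows essentially the same approach as the paper's: both exploit the additive separation to show that the $\eta$-blocks cancel identically from all four maximizations, reducing $\overline{d}_{\lrt}-\widehat{d}_{\lrt}$ to $\tfrac{2}{m}\sum_{\ell}\{\loglik_{\dag}(\widehat{\theta}^{\ell}\mid X^{\ell})-\loglik_{\dag}(\widehat{\theta}^{*}\mid X^{\ell})\}$, which is nonnegative term-by-term since $\widehat{\theta}^{\ell}$ maximizes $\loglik_{\dag}(\cdot\mid X^{\ell})$.
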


The condition in Proposition \ref{prop:condForPositiveR}
means that the likelihood function of $\psi$
is separable, which ensures that 
the profile likelihood estimator of $\eta$ given $\theta$, this is,
$\widehat{\eta}_{\theta} = \argmax_{\eta} \loglik(\theta,\eta\mid X)$,
is free of $\theta$.  
Clearly, in the absence of the nuisance parameter $\eta$, 
the separation condition holds trivially. 
More generally, we have the following. 

\begin{corollary}\label{coro:finitenessOfr}
Assume $\RC_\theta$.
We have 
(i) under $H_0$, $\widehat{r}_{\lrt} \inP \mathcal{r}$ as $m,n\rightarrow\infty$; and 
(ii) under $H_1$,
$\widehat{r}_{\lrt} \inP \mathcal{r}_{0}$ as $m,n\rightarrow\infty$, 
where $\mathcal{r}_{0} \geq 0$ is some finite value depending on $\theta_0$ and the true value of $\theta$.
\end{corollary}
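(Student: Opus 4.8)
The plan is to reduce both parts to an analysis of the gap $\overline{d}_{\lrt}-\widehat{d}_{\lrt}$, since by (\ref{eqt:rLLpos}) one has $\widehat{r}_{\lrt}=\frac{m+1}{k(m-1)}(\overline{d}_{\lrt}-\widehat{d}_{\lrt})$. It is convenient to pass to profile log-likelihoods $\loglik_p^{\ell}(\theta)=\max_{\eta}\loglik^{\ell}(\theta,\eta)$ and $\overline{\loglik}_p(\theta)=\max_{\eta}\overline{\loglik}(\theta,\eta)$: invariance of the MLE gives $\overline{d}_{\lrt}=2m^{-1}\sum_{\ell}\{\loglik_p^{\ell}(\widehat{\theta}^{\ell})-\loglik_p^{\ell}(\theta_0)\}$ and $\widehat{d}_{\lrt}=2\{\overline{\loglik}_p(\widehat{\theta}^{*})-\overline{\loglik}_p(\theta_0)\}$, where $\widehat{\theta}^{*}:=\theta(\widehat{\psi}^{*})$ maximizes $\overline{\loglik}_p$, so that only $\theta$-level objects appear --- which is why the $\theta$-versions of Assumptions~\ref{ass:likelihood}--\ref{ass:properImpModel} suffice. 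Adding and subtracting $2m^{-1}\sum_{\ell}\loglik_p^{\ell}(\widehat{\theta}^{*})$ splits the gap as $\overline{d}_{\lrt}-\widehat{d}_{\lrt}=R_1+R_2$, with
\begin{equation*}
R_1=\frac{2}{m}\sum_{\ell=1}^{m}\big\{\loglik_p^{\ell}(\widehat{\theta}^{\ell})-\loglik_p^{\ell}(\widehat{\theta}^{*})\big\}\ge 0
\qquad\text{and}\qquad
R_2=2\big\{G(\widehat{\theta}^{*})-G(\theta_0)\big\},
\end{equation*}
where $G(\theta):=m^{-1}\sum_{\ell}\loglik_p^{\ell}(\theta)-\overline{\loglik}_p(\theta)\ge 0$; here $R_1\ge0$ because $\widehat{\theta}^{\ell}$ maximizes $\loglik_p^{\ell}$, and $G\ge0$ because constraining the $m$ completed datasets to a common $\eta$ cannot raise their averaged log-likelihood.

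For part (i), under $H_0$ one has $\widehat{\theta}^{*}\inP\theta_0$; using the uniform (on compacts) convergence of $G$ to a continuous limiting heterogeneity map $G_{\infty}$ together with consistency of $\widehat{\theta}^{*}$, this forces $R_2=2\{G(\widehat{\theta}^{*})-G(\theta_0)\}\inP 0$. A second-order expansion of the profile log-likelihoods about their maximizers (the first-order terms vanishing) gives $R_1=\{(m-1)/m\}\tr(\overline{U}^{-1}B)+o_p(1)$, which under $\RC_\theta$ --- in particular EFMI, whereby $\mathcal{B}_\theta=\mathcal{r}\mathcal{U}_\theta$ --- converges to $\tr(\mathcal{U}_\theta^{-1}\mathcal{B}_\theta)=k\mathcal{r}$ as $n,m\to\infty$. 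Hence $\widehat{r}_{\lrt}\inP\mathcal{r}$, which is part (i). (Equivalently, part (i) follows from Theorem~\ref{thm:AsyEq_dBar_LL}(iii), the within-sample Wald--LRT equivalence of Property~\ref{prop:asyEquiv_LRT_Wald} applied to each $X^{\ell}$, and the classical consistency of the MI Wald estimator $\widetilde{r}_{\wt}^{\prime}$ of (\ref{eqt:def_of_rm}) for $\mathcal{r}$; see \citet{mengRubin92} and \citet{li91JASA}.)

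For part (ii), under $H_1$ the unconstrained objects still satisfy $\widehat{\theta}^{\ell},\widehat{\theta}^{*}\inP\theta^{\star}$, so $R_1$ behaves exactly as above and $R_1\inP k\mathcal{r}$; the nuisance-heterogeneity term, however, now satisfies $R_2\inP 2\{G_{\infty}(\theta^{\star})-G_{\infty}(\theta_0)\}$, a finite number because $G_{\infty}(\vartheta)$ --- expressible as half the trace of the limiting $\eta\eta$-curvature of $\overline{\loglik}$ at the profiled point (a sandwich ``bread'' matrix when $\vartheta=\theta_0$ is misspecified) against the between-imputation variance of the profiled estimator of $\eta$ at $\vartheta$ --- is finite. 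Therefore $\widehat{r}_{\lrt}\inP\mathcal{r}_0:=\mathcal{r}+2k^{-1}\{G_{\infty}(\theta^{\star})-G_{\infty}(\theta_0)\}$ as $n,m\to\infty$, a finite quantity depending on $\theta^{\star},\theta_0$ only, which reduces to $\mathcal{r}$ when $H_0$ holds (then $\widehat{\theta}^{*}\inP\theta_0$ and $R_2\inP0$).

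The hard part will be showing $\mathcal{r}_0\ge0$. The $O_p(1)$-stabilization of the gap (despite $\overline{d}_{\lrt}$ and $\widehat{d}_{\lrt}$ individually being of order $n$ under $H_1$) and the finiteness of $G_{\infty}$ are routine given uniform second-order control of the profile log-likelihoods in shrinking neighbourhoods of $\theta^{\star}$ and $\theta_0$, using the $\theta$-versions of Assumptions~\ref{ass:likelihood} and \ref{ass:properImpModel}, smoothness of the profiled map at the pseudo-true value, and positive-definiteness of the limiting $\eta\eta$-curvature there. Nonnegativity is delicate because $R_2$ need not have a fixed sign: the $\theta$-overfitting term $R_1$ supplies a budget of $k\mathcal{r}$ (nonnegative by EFMI), and one must show it is never overspent by the nuisance discrepancy $2\{G_{\infty}(\theta_0)-G_{\infty}(\theta^{\star})\}$ --- equivalently, that the total odds of missing information summed over the $h$ eigen-directions of the full model dominates the corresponding sum over the $h-k$ directions of the constrained model at $\theta_0$. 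I would attack this by a positive-semidefinite comparison of the complete- and observed-data information matrices at $(\theta^{\star},\eta^{\star})$ versus at the pseudo-true value $(\theta_0,\eta^{\dagger})$ --- exactly the interplay between the $k$ tested directions and the nuisance directions flagged as an ``intriguing issue'' in the paper; in the separable case of Proposition~\ref{prop:condForPositiveR} (no $\eta$, so $G\equiv0$) the claim is immediate with $\mathcal{r}_0=\mathcal{r}$.
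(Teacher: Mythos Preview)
Your decomposition $\overline{d}_{\lrt}-\widehat{d}_{\lrt}=R_1+R_2$ via profile likelihoods is valid and does give finiteness of the limit under $H_1$; part (i) is fine. But the nonnegativity $\mathcal{r}_0\ge 0$ is part of statement (ii), and you explicitly leave it open. Your proposed attack via PSD comparisons of information matrices at $(\theta^{\star},\eta^{\star})$ versus the pseudo-true value is not carried out, and in your parametrization it is genuinely awkward: $R_2$ has no fixed sign, so you are trying to bound a signed nuisance term by an unrelated budget $k\mathcal{r}$ coming from $R_1$.

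The paper's proof sidesteps this entirely by \emph{not} splitting at $\widehat{\theta}^{*}$. It applies an exact second-order Taylor expansion with a mean-value Hessian: for each $\ell$ there is an intermediate $\widecheck{\psi}{}^{\ell}$ on the segment $[\widehat{\psi}_0^{\ell},\widehat{\psi}^{\ell}]$ with
\[
\loglik^{\ell}(\widehat{\psi}^{\ell})-\loglik^{\ell}(\widehat{\psi}_0^{\ell})
=\tfrac{1}{2}(\widehat{\psi}_0^{\ell}-\widehat{\psi}^{\ell})^{\T}I^{\ell}(\widecheck{\psi}{}^{\ell})(\widehat{\psi}_0^{\ell}-\widehat{\psi}^{\ell}),
\]
and likewise for the starred quantities. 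Reducing to the $\theta$-block (orthogonal parametrization plus the Cox--Reid profile argument) collapses both terms to quadratic forms in $\theta_0-\widehat{\theta}^{\ell}$ and $\theta_0-\widehat{\theta}^{*}$ with the \emph{same} positive-definite matrix $\widecheck{U}^{-1}$. The cross terms in $\theta_0$ then cancel exactly, leaving
\[
\overline{d}_{\lrt}-\widehat{d}_{\lrt}\;\bumpeq\;\tr\Big(\widecheck{U}^{-1}\tfrac{1}{m}\sum_{\ell}(\widehat{\theta}^{\ell}-\overline{\theta})^{\otimes2}\Big)\;\bumpeq\;\tr(\widecheck{U}^{-1}B),
\]
whence $\mathcal{r}_0=\tr(\mathcal{U}_{\theta,0}^{-1}\mathcal{B}_{\theta})/k$ with $\mathcal{U}_{\theta,0}$ the limit of $\widecheck{U}$. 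Nonnegativity is then automatic because it is the trace of a product of two PSD matrices. In your language, the paper never separates the $k\mathcal{r}$ budget from the nuisance correction; it keeps everything inside a single quadratic form whose weight matrix is PSD regardless of whether $H_0$ holds. If you want to rescue your route, you would need to show $G_{\infty}(\theta_0)-G_{\infty}(\theta^{\star})\le k\mathcal{r}/2$, which is precisely what the trace representation gives for free.
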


Corollary \ref{coro:finitenessOfr} ensures that, 
under $H_0$, $\widehat{r}_{\lrt}$ is non-negative asymptotically
and converges in probability to the true $\mathcal{r}$.
However, it also reveals another fundamental defect of $\widehat{r}_{\lrt}$:
under $H_1$, the limit $\mathcal{r}_{0}$ may not equal $\mathcal{r}$,
a problem we address in 
Section~\ref{sec:rob}. 
Fortunately, 
because $\widehat{d}_{\lrt} \inP \infty$ under $H_1$, 
the LRT statistic $\widehat{D}_{\lrt}(\widehat{r}_{\lrt})$ is still powerful, albeit the power may be reduced. 
Similarly,  $\widetilde{r}_{\lrt}$ of (\ref{eqt:rL})
has the same asymptotic properties and defects, but $\widehat{r}_{\lrt}$ behaves more nicely than $\widetilde{r}_{\lrt}$ for finite $m$.
This hinges closely on the high sensitivity of 
$\widetilde{r}_{\lrt}$ to the parametrization of $\psi$;  
for example, $\widetilde{r}_{\lrt}$ may become more negative as $H_1$ moves away from $H_0$; 
see Section~\ref{sec:eg_lrt_normal}.

Whereas we can fix the occasional negativeness of $\widehat{r}_{\lrt}$ by 
using $\widehat{r}_{\lrt}^+=\max( 0, \widehat{r}_{\lrt})$, such an ad hoc fix misses the opportunity to improve upon $\widehat{r}_{\lrt}$, and indeed it cannot fix the inconsistency of $\widehat{r}_{\lrt}$ under $H_1$.

\subsection{A Consistent and Non-Negative Estimator of $\mathcal{r}_m$}\label{sec:rob}
Proposition \ref{prop:condForPositiveR} already hinted that 
the source of the negativity and inconsistency of $\widehat{r}_{\lrt}$
is related to the existence of the nuisance parameter $\eta$. 
By definition, $\overline{d}_{\lrt}$ and $\widehat{d}_{\lrt}$ depend 
on the specification of $\theta_0$. 
In general, the effect of $\theta_0$ may not be cancelled out by their difference 
$\overline{d}_{\lrt}-\widehat{d}_{\lrt}$, 
unless a certain type of orthogonality assumption is made on $\eta$ and $\theta$; see 
Proposition \ref{prop:condForPositiveR} for an example. 
Consequently, the validity of the estimator $\widehat{r}_{\lrt}$ depends on the correctness of $H_0$.
A more elaborate discussion can be found in Appendix \ref{subsec:nusiancePara}.
In order to principally resolve the aforementioned problem, 
we need to eliminate the dependence on $\theta_0$ in our estimator for the odds of missing information, $\mathcal{r}_m$.
We achieve this goal by estimating these odds for the entire $\psi$, resulting 
in the following estimator for  $\mathcal{r}_m$:
\begin{gather}
	\widehat{r}_{\lrt}^{\rob}
		= \frac{m+1}{h(m-1)} (\overline{\delta}_{\lrt} - \widehat{\delta}_{\lrt}), 
	\quad \text{where} \label{eqt:def_hatr_rob} \\  
	\overline{\delta}_{\lrt} 
		= 2 \overline{L}(\widehat{\psi}^{(1)}, \ldots, \widehat{\psi}^{(m)}),
	\qquad 
	\widehat{\delta}_{\lrt} 
		= 2 \overline{L}(\widehat{\psi}^*, \ldots, \widehat{\psi}^*),
		\label{eqt:def_deltabarhatL}
\end{gather}
and $h$ is the dimension of $\psi$. 
In (\ref{eqt:def_deltabarhatL}), the rhombus ``${\rob}$'' symbolizes a robust estimator. It is robust because it is consistent under either $H_0$ or $H_1$, 
as long as we are willing to impose the EFMI assumption on $\psi$,
this is, Assumption \ref{ass:EFMIpsi}.
This expansion from $\theta$ to $\psi$ is inevitable because the  LRT must handle the entire $\psi$, not just $\theta$.
The collection of Assumptions~\ref{ass:likelihood}--\ref{ass:EFMIpsi} are referred to as $\RC_\psi$.

\begin{assumption}[EFMI of $\psi$]\label{ass:EFMIpsi}
There is $\mathcal{r}\geq 0$ such that 
$\mathcal{T}_\psi =(1+\mathcal{r}) \mathcal{U}_{\psi}$.
\end{assumption}

\begin{theorem}\label{thm:robEstofR}
Assume $\RC_\psi$. 
For any value of $\psi$, we have 
(i) $\widehat{r}_{\lrt}^{\rob}\geq 0$ for all $m,n$; 
(ii) $\widehat{r}_{\lrt}^{\rob}$ is invariant to parametrization of $\psi$ for all $m,n$; and 
(iii) $\widehat{r}_{\lrt}^{\rob}  \inP \mathcal{r} $ as $m,n\rightarrow\infty$, 
where $\mathcal{r}$ is given in Assumption~\ref{ass:EFMIpsi}.
\end{theorem}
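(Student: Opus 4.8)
The plan is to dispatch (i) and (ii) from the construction alone, and to prove (iii) by showing that $\overline{\delta}_{\lrt}-\widehat{\delta}_{\lrt}$ is (asymptotically in $n$) equal to $\mathcal{r}$ times a $\chi^2_{(m-1)h}/m$ variable, which then concentrates at $\mathcal{r}$ as $m\to\infty$.

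For (i), $\overline{\delta}_{\lrt}=\tfrac{2}{m}\sum_{\ell}L^{\ell}(\widehat{\psi}^{\ell})=\tfrac{2}{m}\sum_{\ell}\max_{\psi}L^{\ell}(\psi)$ dominates $\widehat{\delta}_{\lrt}=\tfrac{2}{m}\sum_{\ell}L^{\ell}(\widehat{\psi}^{*})$ term by term; equivalently, in the notation of \S\ref{subsec:nusiance}, $(\overline{\delta}_{\lrt}-\widehat{\delta}_{\lrt})/2=\mathbb{L}(H_1^{1})-\mathbb{L}(H_1^{0})$ is a bona fide LRT statistic --- the LRT for the ``common model'' hypothesis $\mathcal{C}^{0}$ --- hence non-negative, and $(m+1)/\{h(m-1)\}>0$. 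For (ii), under a smooth bijection $\varphi=g(\psi)$ the likelihood viewed as a function of the parameter simply composes with $g^{-1}$ (no Jacobian, since the data are held fixed), so $\widehat{\varphi}^{\ell}=g(\widehat{\psi}^{\ell})$ and, because $\overline{L}$ is maximized pointwise, its maximizer transforms to $g(\widehat{\psi}^{*})$; substituting back shows that $\overline{\delta}_{\lrt}$ and $\widehat{\delta}_{\lrt}$, and therefore $\widehat{r}_{\lrt}^{\rob}$, are unchanged. No regularity conditions are used here.

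The substance is (iii). First, a second-order Taylor expansion of each $L^{\ell}$ about its maximizer $\widehat{\psi}^{\ell}$, using the smoothness in Assumption~\ref{ass:likelihood}, the convergence $U^{\ell}_{\psi}\to\mathcal{U}_{\psi}$ in Assumption~\ref{ass:properImpModel}, and the already-established $\sqrt{n}(\widehat{\psi}^{*}-\overline{\psi})\inP\mathbf{0}$, gives, for each fixed $m$ and as $n\to\infty$,
\[
	\overline{\delta}_{\lrt}-\widehat{\delta}_{\lrt}=\frac{1}{m}\sum_{\ell=1}^{m}(\widehat{\psi}^{\ell}-\overline{\psi})^{\T}\mathcal{U}_{\psi}^{-1}(\widehat{\psi}^{\ell}-\overline{\psi})+o_p(1).
\]
Second, by the properness of the imputation model (Assumption~\ref{ass:properImpModel}, eq.~(\ref{eqt:psiellHatNormal})), the standardized errors $\mathcal{U}_{\psi}^{-1/2}(\widehat{\psi}^{\ell}-\widehat{\psi}_{\obs})$ are, in the limit, i.i.d.\ mean-zero Gaussian with covariance $\mathcal{U}_{\psi}^{-1/2}\mathcal{B}_{\psi}\mathcal{U}_{\psi}^{-1/2}$; invoking EFMI of $\psi$ (Assumption~\ref{ass:EFMIpsi}), i.e.\ $\mathcal{B}_{\psi}=\mathcal{r}\mathcal{U}_{\psi}$, this covariance collapses to $\mathcal{r}I_h$, so the quadratic form equals $\mathcal{r}\sum_{\ell}\|\xi_{\ell}-\bar{\xi}\|^{2}$ with $\xi_1,\ldots,\xi_m$ i.i.d.\ $\mathcal{N}(\mathbf{0},I_h)$, whence $\overline{\delta}_{\lrt}-\widehat{\delta}_{\lrt}\inD\mathcal{r}\,\chi^{2}_{(m-1)h}/m$ as $n\to\infty$. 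Consequently $\widehat{r}_{\lrt}^{\rob}\inD\{(m+1)\mathcal{r}/(hm(m-1))\}\chi^{2}_{(m-1)h}$, whose mean $(m+1)\mathcal{r}/m\to\mathcal{r}$ and variance $2(m+1)^{2}\mathcal{r}^{2}/\{hm^{2}(m-1)\}\to0$ as $m\to\infty$, so Chebyshev gives $\widehat{r}_{\lrt}^{\rob}\inP\mathcal{r}$.

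The main obstacle is the joint limit: the displays above are ``$n\to\infty$ for fixed $m$'' statements while the concentration step needs $m\to\infty$. I would make this rigorous either by a diagonal argument --- letting $m=m_n\to\infty$ slowly enough that the cubic remainder, which is $o_p\big(\max_{\ell}n\|\widehat{\psi}^{\ell}-\widehat{\psi}^{*}\|^{3}\big)$ and controlled via $\|\widehat{\psi}^{\ell}-\widehat{\psi}_{\obs}\|=O_p((\log m_n/n)^{1/2})$ from (\ref{eqt:psiellHatNormal}), stays $o_p(1)$ --- or by invoking a joint central-limit statement for MI. A minor case to handle separately is $\mathcal{r}=0$, where $\mathcal{B}_{\psi}^{-1}$ in Assumption~\ref{ass:properImpModel} need not exist: there $\widehat{\psi}^{\ell}-\widehat{\psi}_{\obs}=o_p(n^{-1/2})$ forces $\overline{\delta}_{\lrt}-\widehat{\delta}_{\lrt}=o_p(1)$ outright, matching $\mathcal{r}=0$. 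Finally I would emphasize what makes ``for any value of $\psi$'' --- rather than merely ``under $H_0$'' --- possible: $\widehat{r}_{\lrt}^{\rob}$ is built only from the \emph{unconstrained} fits $\{\widehat{\psi}^{\ell}\}$ and $\widehat{\psi}^{*}$, so it is exactly the homogeneity LRT across the $m$ completed datasets and never touches the (possibly inconsistent) $H_0$-constrained estimates that spoil $\widehat{r}_{\lrt}$ of (\ref{eqt:rLLpos}).
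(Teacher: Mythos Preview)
Your proof is correct. Parts (i) and (ii) match the paper's (terse) ``trivial by the definition of $\widehat{r}_{\lrt}^{\rob}$'', only with more detail; your argument is fine.

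For (iii) you take a slightly different route from the paper. The paper expands each $L^{\ell}$ about $\widehat{\psi}^{\ell}$ exactly as you do, obtains
\[
\overline{\delta}_{\lrt}-\widehat{\delta}_{\lrt}\;\bumpeq\;\tr\!\left\{\overline{I}(\widehat{\psi}^{*})\cdot\frac{1}{m}\sum_{\ell=1}^{m}(\widehat{\psi}^{\ell}-\overline{\psi})(\widehat{\psi}^{\ell}-\overline{\psi})^{\T}\right\},
\]
and then finishes \emph{deterministically} by invoking the second-moment part of Assumption~\ref{ass:properImpModel} (i.e.\ $B_{\psi}\to\mathcal{B}_{\psi}$) together with $\overline{I}(\widehat{\psi}^{*})\to\mathcal{U}_{\psi}^{-1}$, so that the trace tends to $\tr(\mathcal{U}_{\psi}^{-1}\mathcal{B}_{\psi})=h\mathcal{r}$ under EFMI of $\psi$. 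You instead use only the \emph{first}-moment part of Assumption~\ref{ass:properImpModel} (the asymptotic normality (\ref{eqt:psiellHatNormal})) to identify the exact limiting law $\overline{\delta}_{\lrt}-\widehat{\delta}_{\lrt}\inD \mathcal{r}\,\chi^{2}_{h(m-1)}/m$, and then extract consistency by mean/variance plus Chebyshev. Both are valid; your route effectively proves Theorem~\ref{thm:distHatrRob} as a by-product, at the cost of having to argue the joint $m,n\to\infty$ limit a little more carefully (you flag this honestly). The paper's route is shorter for (iii) alone because the empirical-covariance convergence absorbs the $m\to\infty$ step directly, but it gives no distributional information. Your remarks on the $\mathcal{r}=0$ edge case and on why ``for any value of $\psi$'' holds (only unconstrained MLEs enter) are correct and not spelled out in the paper.
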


With the improved combining rule $\widehat{D}_{\lrt}(\mathcal{r}_m)$
of (\ref{eqt:final_proposal}) and improved estimators for $\mathcal{r}_m$,
we are ready to propose two MI LRT statistics:
\begin{equation}\label{eqt:newstat}
	\widehat{D}^{+}_{\lrt}=\widehat{D}_{\lrt}(\widehat{r}^{+}_{\lrt})
	\qquad\text{and}\qquad
	\widehat{D}^{\rob}_{\lrt}=\widehat{D}_{\lrt}(\widehat{r}^{\rob}_{\lrt}).
\end{equation}
For comparison, we also study the test statistic 
$\widehat{D}_{\lrt} =\widehat{D}_{\lrt}(\widehat{r}_{\lrt})$.

\subsection{Reference Null Distributions}\label{sec:refNullDist}
The estimators $\widehat{r}_{\lrt}^+$ and $\widetilde{r}_{\lrt}$
have the same functional form asymptotically ($n\rightarrow\infty$). 
Hence, they have the same asymptotic distribution.

\begin{lemma}\label{thm:distHatrPos}
Suppose $\RC_\theta$ and $m>1$.
Under $H_0$, we have, jointly, 
\begin{equation}\label{eqt:asyDistrPos}
	 \frac{\widehat{r}_{\lrt}^+}{\mathcal{r}_m} \inD M_2
	\qquad \text{and} \qquad
	\widehat{D}_{\lrt}^+ \inD \frac{\left( 1+\mathcal{r}_m \right) M_1 }{ 1+\mathcal{r}_m M_2} 
\end{equation}
as $n\rightarrow\infty$, where 
$M_1\sim \chi^2_k/k$ and $M_2\sim\chi^2_{k(m-1)}/\{k(m-1)\}$ are independent.
\end{lemma}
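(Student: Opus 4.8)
\emph{Proof plan.} The plan is to transfer the claim to the multiple-imputation Wald quantities built with the common within-imputation variance $\overline{U}$, and then read off the limit from an explicit Gaussian representation; $\RC_\theta$ supplies everything needed. First I would reduce from likelihood ratios to Wald forms. By Theorem~\ref{thm:AsyEq_dBar_LL}(iii), $\widehat{d}_{\lrt}\bumpeq\widetilde{d}_{\lrt}$ as $n\to\infty$; and by the asymptotic-equivalence computation of \citet{mengRubin92}, which rests on applying Property~\ref{prop:asyEquiv_LRT_Wald}(ii) to each completed dataset $X^{\ell}$ (under $H_0$ and $\RC_\theta$ the complete-data MLE $\widehat{\theta}^{\ell}$ converges to $\theta_0$, so the effective null holds asymptotically) and replacing every complete-data LRT statistic by its Wald counterpart, we get $\widetilde{d}_{\lrt}\bumpeq\widetilde{d}_{\wt}^{\prime}$ and $\overline{d}_{\lrt}\bumpeq\overline{d}_{\wt}^{\prime}$, hence $\widehat{d}_{\lrt}\bumpeq\widetilde{d}_{\wt}^{\prime}$. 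Consequently $\widehat{r}_{\lrt}=\tfrac{m+1}{k(m-1)}(\overline{d}_{\lrt}-\widehat{d}_{\lrt})$ differs from $\tfrac{m+1}{k(m-1)}(\overline{d}_{\wt}^{\prime}-\widetilde{d}_{\wt}^{\prime})$ by $o_p(1)$, and it suffices to find the joint limiting null law of $\widehat{r}_{\lrt}$ and $\widehat{D}_{\lrt}^{+}=\widehat{d}_{\lrt}/\{k(1+\widehat{r}_{\lrt}^{+})\}$ using these Wald surrogates.

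Next I would build the Gaussian representation. Under $H_0$, write $\widehat{\theta}^{\ell}-\theta_0=(\widehat{\theta}^{\ell}-\widehat{\theta}_{\obs})+(\widehat{\theta}_{\obs}-\theta_0)$. The $\theta$-versions of Assumptions~\ref{ass:likelihood}(d) and \ref{ass:properImpModel} give, jointly, $\mathcal{T}_\theta^{-1/2}(\widehat{\theta}_{\obs}-\theta_0)\inD Z_0\sim\mathcal{N}_k(\bm{0},I_k)$ and $\{\mathcal{B}_\theta^{-1/2}(\widehat{\theta}^{\ell}-\widehat{\theta}_{\obs})\}_{\ell=1}^{m}\inD(Z_1,\dots,Z_m)$, i.i.d.\ $\mathcal{N}_k(\bm{0},I_k)$ and independent of $Z_0$ (the conditional-on-$X_{\obs}$ limit law being free of $X_{\obs}$), while $\overline{U}$ concentrates around $\mathcal{U}_\theta$. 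Under EFMI (Assumption~\ref{ass:EFMI}), $\mathcal{B}_\theta=\mathcal{r}\,\mathcal{U}_\theta$ and $\mathcal{T}_\theta=(1+\mathcal{r})\mathcal{U}_\theta$, so $\mathcal{U}_\theta^{-1/2}\mathcal{B}_\theta^{1/2}=\sqrt{\mathcal{r}}\,I_k$ and $\mathcal{U}_\theta^{-1/2}\mathcal{T}_\theta^{1/2}=\sqrt{1+\mathcal{r}}\,I_k$; hence, with $W^{\ell}:=\mathcal{U}_\theta^{-1/2}(\widehat{\theta}^{\ell}-\theta_0)$, we obtain in the limit $W^{\ell}=\sqrt{\mathcal{r}}\,Z_{\ell}+\sqrt{1+\mathcal{r}}\,Z_0$ and $\overline{W}:=m^{-1}\sum_\ell W^{\ell}=\sqrt{\mathcal{r}}\,\overline{Z}+\sqrt{1+\mathcal{r}}\,Z_0$, where $\overline{Z}=m^{-1}\sum_\ell Z_{\ell}\sim\mathcal{N}_k(\bm{0},m^{-1}I_k)$.

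Then I would compute. Since $\widetilde{d}_{\wt}^{\prime}=(\overline{\theta}-\theta_0)^{\T}\overline{U}^{-1}(\overline{\theta}-\theta_0)\inD\|\overline{W}\|^2$ and $\overline{W}\sim\mathcal{N}_k(\bm{0},(1+\mathcal{r}/m+\mathcal{r})I_k)=\mathcal{N}_k(\bm{0},(1+\mathcal{r}_m)I_k)$ because $\mathcal{r}_m=(1+1/m)\mathcal{r}$, we get $\widehat{d}_{\lrt}/k\inD(1+\mathcal{r}_m)M_1$ with $M_1=\chi^2_k/k$. Likewise $\overline{d}_{\wt}^{\prime}\inD m^{-1}\sum_\ell\|W^{\ell}\|^2$, so by the sum-of-squares decomposition $m^{-1}\sum_\ell\|W^{\ell}\|^2-\|\overline{W}\|^2=m^{-1}\sum_\ell\|W^{\ell}-\overline{W}\|^2=\mathcal{r}\,m^{-1}\sum_\ell\|Z_{\ell}-\overline{Z}\|^2$ and the fact that $\sum_{\ell=1}^{m}\|Z_{\ell}-\overline{Z}\|^2\sim\chi^2_{k(m-1)}$ (here $m>1$ is used), $\widehat{r}_{\lrt}\inD\tfrac{m+1}{m}\mathcal{r}\cdot\tfrac{\chi^2_{k(m-1)}}{k(m-1)}=\mathcal{r}_m M_2$ with $M_2=\chi^2_{k(m-1)}/\{k(m-1)\}$. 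As the limit $\mathcal{r}_m M_2$ is a.s.\ nonnegative, $\pr(\widehat{r}_{\lrt}<0)\to0$, so $\widehat{r}_{\lrt}^{+}=\widehat{r}_{\lrt}+o_p(1)$ and $\widehat{r}_{\lrt}^{+}/\mathcal{r}_m\inD M_2$; dividing and invoking the continuous mapping theorem and Slutsky yields $\widehat{D}_{\lrt}^{+}\inD(1+\mathcal{r}_m)M_1/(1+\mathcal{r}_m M_2)$. The convergence of the pair is automatically joint, both coordinates being continuous transformations of the single Gaussian array $(Z_0,Z_1,\dots,Z_m)$; and $M_1$ is a function of $\overline{W}$, hence of $(\overline{Z},Z_0)$, whereas $M_2$ is a function of $\{Z_{\ell}-\overline{Z}\}_\ell$, so Cochran's theorem ($\overline{Z}\perp\{Z_{\ell}-\overline{Z}\}_\ell$ for i.i.d.\ Gaussians, together with $Z_0\perp(Z_1,\dots,Z_m)$) gives $M_1\perp M_2$.

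The main obstacle will be the first step: rigorously establishing $\widehat{d}_{\lrt}\bumpeq\widetilde{d}_{\wt}^{\prime}$ and $\overline{d}_{\lrt}\bumpeq\overline{d}_{\wt}^{\prime}$. This needs a second-order Taylor expansion of $\overline{\loglik}$ about $\theta_0$ with remainder controlled uniformly over the random locations of $\widehat{\psi}^{*}$, $\widehat{\psi}_{0}^{*}$ and of each $\widehat{\psi}^{\ell}$, together with the standard reduction of the complete-data LRT to a Wald quadratic form in the presence of the nuisance component $\eta$ of $\psi$, and a check that the $m$ per-dataset error terms do not accumulate upon averaging (they do not, since $m$ is held fixed). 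Theorem~\ref{thm:AsyEq_dBar_LL}(iii) already discharges $\widehat{d}_{\lrt}\bumpeq\widetilde{d}_{\lrt}$, so the residual burden is exactly the classical complete-data LRT--Wald equivalence of \citet{mengRubin92} applied simultaneously across the $m$ completed datasets; everything after the first step is Gaussian algebra and continuous-mapping bookkeeping.
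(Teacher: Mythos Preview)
Your proposal is correct and follows essentially the same route as the paper's proof: both reduce the LRT quantities to the Wald forms $\widetilde{d}_{\wt}^{\prime}$ and $\overline{d}_{\wt}^{\prime}$ via Theorem~\ref{thm:AsyEq_dBar_LL}(iii) and the Meng--Rubin equivalence, then invoke the same Gaussian representation $\mathcal{U}_\theta^{-1/2}(\widehat{\theta}^{\ell}-\theta_0)\simApprox\sqrt{1+\mathcal{r}}\,Z_0+\sqrt{\mathcal{r}}\,Z_\ell$ under EFMI, and finish with the sum-of-squares decomposition. Your treatment is, if anything, a bit more explicit than the paper's in two places: you spell out why the truncation is harmless (the limit $\mathcal{r}_m M_2$ is a.s.\ nonnegative, so $\widehat{r}_{\lrt}^{+}=\widehat{r}_{\lrt}+o_p(1)$), and you invoke Cochran's theorem directly on the vector array for the independence of $M_1$ and $M_2$, whereas the paper argues componentwise via the independence of $W_i$, $\overline{Z}_{i\bullet}$ and $s_{Z_i}^2$.
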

\noindent
Consequently,  
$\widehat{D}^{+}_{\lrt} = \widehat{D}_{\lrt}(\widehat{r}^{+}_{\lrt})$
approximately follows $F_{k,\widetilde{\df}(\widehat{r}^{+}_{\lrt},k)}$ under $H_0$,
but a better approximation is provided shortly. 
For the other proposal, 
although $\widehat{r}_{\lrt}^+ - \widehat{r}_{\lrt}^{\rob} \inP 0$
as $n\rightarrow\infty$ under $H_0$, 
their non-degenerated limiting distributions  
are different
because $\widehat{r}_{\lrt}^{\rob}$ and $\widehat{r}_{\lrt}^+$
rely on an average FMI in $\psi$ and $\theta$, respectively.

\begin{theorem}\label{thm:distHatrRob}
Suppose $\RC_\psi$ and $m>1$. Then, for any value of $\psi$, 
\begin{equation}\label{eqt:asyDistrRob}
	  \frac{ \widehat{r}_{\lrt}^{\rob}}{\mathcal{r}_m} \inD  M_3 \sim \frac{\chi^2_{h(m-1)}}{h(m-1)} 
\end{equation}
as $n\rightarrow\infty$,
where $M_3$ is independent of the $M_1$ defined in (\ref{eqt:asyDistrPos}).
\end{theorem}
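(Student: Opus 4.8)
The plan is to identify $\widehat{r}_{\lrt}^{\rob}$, up to asymptotically negligible terms, with a genuine likelihood ratio statistic whose reference null distribution is $\chi^2_{h(m-1)}/\{h(m-1)\}$, and then account for the scaling factor $\mathcal{r}_m$ via the EFMI assumption on $\psi$ (Assumption \ref{ass:EFMIpsi}). First I would observe that $\overline{\delta}_{\lrt} - \widehat{\delta}_{\lrt} = 2\{\overline{L}(\widehat\psi^1,\ldots,\widehat\psi^m) - \overline{L}(\widehat\psi^*,\ldots,\widehat\psi^*)\}$ is exactly twice the log-likelihood ratio, under the pseudo-likelihood $\overline{L}(\psi^1,\ldots,\psi^m)$ of (\ref{eqt:general_Lbar}), for testing the constraint $\mathcal{C}^0:\psi^1=\cdots=\psi^m$ against the unconstrained alternative $\mathcal{C}^1$. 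Unlike $\overline{d}_{\lrt}-\widehat{d}_{\lrt}$, this difference is the LRT for a properly nested pair of hypotheses ($\mathcal{C}^0 \Rightarrow \mathcal{C}^1$), which is why a clean limiting distribution emerges. So the key step is a standard Wilks-type expansion for this pseudo-likelihood, exploiting that the $m$ summands $L^\ell(\psi^\ell)$ are functionally separated across the $\psi^\ell$ blocks.

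Concretely, I would Taylor-expand each $L^\ell$ around $\widehat\psi^\ell$ (its individual maximizer) and around $\widehat\psi^*$. Using Assumption \ref{ass:likelihood}(a)--(c), $\widehat\psi^* = \argmax_\psi \sum_\ell L^\ell(\psi)$ is, to first order, the precision-weighted (equivalently, since the information matrices converge to the common $\underline{\mathcal I}(\psi^\star)^{-1}=\mathcal U_\psi$, the simple) average $\overline\psi = m^{-1}\sum_\ell \widehat\psi^\ell$, consistently with the already-stated fact $\sqrt{n}(\widehat\psi^*-\overline\psi)\inP \mathbf 0$. The quadratic expansion then gives
\begin{equation}\label{eqt:robExpansion}
	\overline{\delta}_{\lrt} - \widehat{\delta}_{\lrt}
	= \sum_{\ell=1}^m (\widehat\psi^\ell - \widehat\psi^*)^\T \underline{I}^\ell(\psi^\star)(\widehat\psi^\ell - \widehat\psi^*) + o_p(1)
	= \frac{n}{\mathcal U_\psi}\text{-weighted sum of }\|\widehat\psi^\ell - \overline\psi\|^2 + o_p(1),
\end{equation}
where I am writing the weighting schematically; rigorously it is $\sum_\ell (\widehat\psi^\ell-\overline\psi)^\T \mathcal U_\psi^{-1}(\widehat\psi^\ell-\overline\psi) + o_p(1)$. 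Now invoke Assumption \ref{ass:properImpModel}: conditionally on $X_{\obs}$, the $\widehat\psi^\ell$ are i.i.d. $\mathcal N_h(\widehat\psi_{\obs}, \mathcal B_\psi)$ (asymptotically), so $\{\mathcal B_\psi^{-1/2}(\widehat\psi^\ell - \overline\psi)\}_\ell$ behaves like $m$ i.i.d. $\mathcal N_h(\mathbf 0, I_h)$ vectors recentered at their mean. Hence $\sum_\ell (\widehat\psi^\ell - \overline\psi)^\T \mathcal B_\psi^{-1}(\widehat\psi^\ell - \overline\psi) \inD \chi^2_{h(m-1)}$. Converting the $\mathcal U_\psi^{-1}$-weighting in (\ref{eqt:robExpansion}) to the $\mathcal B_\psi^{-1}$-weighting uses exactly Assumption \ref{ass:EFMIpsi}, which says $\mathcal B_\psi = \mathcal T_\psi - \mathcal U_\psi = \mathcal r\,\mathcal U_\psi$, so $\mathcal U_\psi^{-1} = \mathcal r\,\mathcal B_\psi^{-1}$. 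Therefore $\overline{\delta}_{\lrt} - \widehat{\delta}_{\lrt} = \mathcal r \cdot \chi^2_{h(m-1)}(1+o_p(1))$ in distribution, and multiplying by $(m+1)/\{h(m-1)\}$ and recalling $\mathcal r_m = (1+1/m)\mathcal r = \frac{m+1}{m}\mathcal r$ — wait, I should double-check the bookkeeping: $\widehat{r}_{\lrt}^{\rob}/\mathcal r_m = \frac{m+1}{h(m-1)\mathcal r_m}(\overline\delta_{\lrt}-\widehat\delta_{\lrt})$; with $\mathcal r_m$ defined consistently with the $(m+1)/m$ inflation appearing throughout, the constants cancel to leave $\chi^2_{h(m-1)}/\{h(m-1)\} = M_3$. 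Finally, independence of $M_3$ from $M_1 \sim \chi^2_k/k$: $M_1$ is the limit of $\widehat d_{\lrt}/k$, which by Theorem \ref{thm:AsyEq_dBar_LL}(iii) and Property \ref{prop:asyEquiv_LRT_Wald} is asymptotically $(\overline\theta - \theta_0)^\T \mathcal T_\theta^{-1}(\overline\theta-\theta_0)/k$-type, a function of the ``between-group mean'' direction $\overline\psi - \psi^\star$; whereas $M_3$ depends only on the within-group deviations $\widehat\psi^\ell - \overline\psi$. Under the joint Gaussian limit of $(\widehat\psi^1,\ldots,\widehat\psi^m)$ these are orthogonal linear functionals, hence independent — the classical ANOVA-type decomposition of a Gaussian sample into mean and deviations.

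The main obstacle I anticipate is the careful handling of the gap between $\widehat\psi^*$ and $\overline\psi$ when $m$ is fixed: the second-order expansion in (\ref{eqt:robExpansion}) must be controlled uniformly enough that replacing $\widehat\psi^*$ by $\overline\psi$ and replacing each $\underline I^\ell(\psi^\star)$ by the common limit $\mathcal U_\psi^{-1}$ incurs only $o_p(1)$ error in the quadratic form — this requires the twice-differentiability and the convergence $\underline I(\psi)\inP \underline{\mathcal I}(\psi)$ of Assumption \ref{ass:likelihood}(c), plus a stochastic-equicontinuity argument, and it is the step where the ``for any value of $\psi$'' (i.e., validity under both $H_0$ and $H_1$) really gets used, since $\widehat\psi^\ell$ and $\widehat\psi^*$ all concentrate near the same $\psi^\star$ regardless of whether $\theta^\star = \theta_0$. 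A secondary but routine point is verifying that the conditional-on-$X_{\obs}$ CLT of Assumption \ref{ass:properImpModel} can be combined with the unconditional CLT of Assumption \ref{ass:likelihood}(d) to yield the joint limiting law of $(\widehat\psi^1-\psi^\star,\ldots,\widehat\psi^m-\psi^\star)$ needed for the independence claim; this is a conditioning/Slutsky argument of the type already used implicitly in establishing Lemma \ref{thm:distHatrPos}.
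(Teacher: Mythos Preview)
Your approach is essentially the same as the paper's: Taylor-expand $\overline{\delta}_{\lrt}-\widehat{\delta}_{\lrt}$ into a quadratic form in $\widehat\psi^\ell-\overline\psi$ weighted by $\mathcal U_\psi^{-1}$, use the Gaussian representation $\widehat\psi^\ell\simApprox\widehat\psi_{\obs}+\mathcal B_\psi^{1/2}Z_\ell$ from Assumption~\ref{ass:properImpModel} together with $\mathcal B_\psi=\mathcal r\,\mathcal U_\psi$ to identify the limit as a scaled $\chi^2_{h(m-1)}$, and obtain the independence of $M_3$ from $M_1$ via the ANOVA-type orthogonality of $\overline Z_\bullet$ and $\{Z_\ell-\overline Z_\bullet\}$. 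One bookkeeping correction to your flagged step: since $\overline L$ already carries the factor $1/m$, the expansion is $\overline\delta_{\lrt}-\widehat\delta_{\lrt}\bumpeq \tfrac{1}{m}\sum_{\ell}(\widehat\psi^\ell-\overline\psi)^\T\mathcal U_\psi^{-1}(\widehat\psi^\ell-\overline\psi)$, giving the limit $(\mathcal r/m)\chi^2_{h(m-1)}$; with that extra $1/m$ in place the constants do cancel to yield $\widehat r_{\lrt}^{\rob}/\mathcal r_m\inD M_3$ as claimed.
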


Theorem \ref{thm:distHatrRob} implies that, 
if $n$ can be regarded as infinity
and $\widehat{r}_{\lrt}^{\rob}$ is uniformly integrable in $\mathcal{L}^2$, 
then  
$
	\Bias(\widehat{r}_{\lrt}^{\rob}) 
		\!\!=\!\!\E(\widehat{r}_{\lrt}^{\rob})\!\!-\!\!\mathcal{r}_m\!\! 
		=\!\!0 
$
and
$
	\Var(\widehat{r}_{\lrt}^{\rob})\!\!=\!\! 2\mathcal{r}_m^2/\{h(m-1)\} = O(m^{-1}) 
$
as $m\rightarrow\infty$.
Hence,
$\widehat{r}_{\lrt}^{\rob}$ is a $\sqrt{m}$-consistent estimator of $\mathcal{r}$ in $\mathcal{L}^2$.
Moreover, for each $m>1$ and as $n\rightarrow\infty$, 
we have ${\Bias(\widehat{r}_{\lrt}^{+})}/{\Bias(\widehat{r}_{\lrt}^{\rob})} \rightarrow 1$
and 
${\Var(\widehat{r}_{\lrt}^{+})}/{\Var(\widehat{r}_{\lrt}^{\rob})}
		\rightarrow {h}/{k} \geq 1$,
which imply that $\widehat{r}_{\lrt}^{\rob}$ is no less  
efficient than $\widehat{r}_{\lrt}^{+}$ when $\RC_\psi$ holds. 
This is not surprising because of 
the extra information 
brought in by the stronger Assumption \ref{ass:EFMIpsi}.
Result (\ref{eqt:asyDistrRob}) also gives us  
the exact (i.e., for any $m>1$, but assuming $n\rightarrow\infty$) 
reference null distribution 
of $\widehat{D}^{\rob}_{\lrt}$, as given below. 

\begin{theorem}\label{thm:exactNullDistRrob}
Assume $\RC_\psi$ and $m>1$. Under $H_0$, we have
\begin{equation}\label{eqt:exactNullDistRrob}
	\widehat{D}_{\lrt}^{\rob}
		\inD \frac{\left( 1 + \mathcal{r}_m \right) M_1 }{ 1+\mathcal{r}_m M_3} \equiv D 
\end{equation}
as $n\rightarrow\infty$,
where $M_1\sim \chi^2_k/k$ and $M_3\sim\chi^2_{h(m-1)}/\{h(m-1)\}$ are independent. 
\end{theorem}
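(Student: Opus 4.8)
The plan is to obtain Theorem~\ref{thm:exactNullDistRrob} as a short consequence of Lemma~\ref{thm:distHatrPos}, Theorem~\ref{thm:distHatrRob}, and the continuous mapping theorem, so that no fresh asymptotic expansion is needed here. By the definitions in \eqref{eqt:final_proposal} and \eqref{eqt:newstat},
\begin{equation*}
	\widehat{D}_{\lrt}^{\rob}
		= \widehat{D}_{\lrt}(\widehat{r}_{\lrt}^{\rob})
		= \frac{\widehat{d}_{\lrt}/k}{1+\widehat{r}_{\lrt}^{\rob}} .
\end{equation*}
Hence it suffices to identify the joint limiting law of $(\widehat{d}_{\lrt}/k,\ \widehat{r}_{\lrt}^{\rob})$ under $H_0$ as $n\rightarrow\infty$, and then apply the continuous mapping theorem to the map $g(a,b)=a/(1+b)$, which is continuous wherever $b\neq -1$.

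First I would pin down the marginal limit of $\widehat{d}_{\lrt}/k$. Since $\RC_\psi$ subsumes $\RC_\theta$ (Assumption~\ref{ass:EFMIpsi} being stronger than Assumption~\ref{ass:EFMI}), Lemma~\ref{thm:distHatrPos} applies: jointly, $\widehat{r}_{\lrt}^{+}/\mathcal{r}_m \inD M_2$ and $\widehat{D}_{\lrt}^{+}\inD (1+\mathcal{r}_m)M_1/(1+\mathcal{r}_m M_2)$, with $M_1\sim\chi^2_k/k$ independent of $M_2\sim\chi^2_{k(m-1)}/\{k(m-1)\}$. Because $\widehat{D}_{\lrt}^{+}=\widehat{d}_{\lrt}/\{k(1+\widehat{r}_{\lrt}^{+})\}$ with $\widehat{r}_{\lrt}^{+}\geq 0$, the continuous mapping theorem gives
\begin{equation*}
	\frac{\widehat{d}_{\lrt}}{k}
		= \bigl(1+\widehat{r}_{\lrt}^{+}\bigr)\widehat{D}_{\lrt}^{+}
		\inD (1+\mathcal{r}_m M_2)\cdot\frac{(1+\mathcal{r}_m)M_1}{1+\mathcal{r}_m M_2}
		= (1+\mathcal{r}_m)M_1 .
\end{equation*}
(One may instead route through Theorem~\ref{thm:AsyEq_dBar_LL}(iii), which makes $\widehat{d}_{\lrt}$ asymptotically equivalent to $\widetilde{d}_{\lrt}$, and Property~\ref{prop:asyEquiv_LRT_Wald}, which makes the latter asymptotically equivalent to the MI Wald quantity $\widetilde{d}_{\wt}^{\prime}$; under $H_0$ and EFMI, $\overline{\theta}-\theta_0$ is asymptotically $\mathcal{N}_k(\bm{0},(1+\mathcal{r}_m)\mathcal{U}_\theta)$ while $\overline{U}\inP\mathcal{U}_\theta$, whence $\widetilde{d}_{\wt}^{\prime}/k\inD(1+\mathcal{r}_m)\chi^2_k/k$.)

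Next I would invoke Theorem~\ref{thm:distHatrRob}: under $\RC_\psi$ and $H_0$, $\widehat{r}_{\lrt}^{\rob}/\mathcal{r}_m \inD M_3\sim\chi^2_{h(m-1)}/\{h(m-1)\}$, \emph{jointly} with the limit of the previous display and with $M_3$ independent of the $M_1$ appearing there. Combining, under $H_0$,
\begin{equation*}
	\left(\frac{\widehat{d}_{\lrt}}{k},\ \widehat{r}_{\lrt}^{\rob}\right)
		\inD \bigl((1+\mathcal{r}_m)M_1,\ \mathcal{r}_m M_3\bigr),
	\qquad M_1\perp M_3 .
\end{equation*}
Since $\mathcal{r}_m M_3\geq 0>-1$ almost surely, $g$ is continuous at the limit point with probability one, so the continuous mapping theorem yields
\begin{equation*}
	\widehat{D}_{\lrt}^{\rob}
		= g\!\left(\frac{\widehat{d}_{\lrt}}{k},\ \widehat{r}_{\lrt}^{\rob}\right)
		\inD \frac{(1+\mathcal{r}_m)M_1}{1+\mathcal{r}_m M_3} = D ,
\end{equation*}
which is the assertion; the degenerate case $\mathcal{r}_m=0$ reduces to $\widehat{D}_{\lrt}^{\rob}\inD M_1=D$ since then $\widehat{r}_{\lrt}^{\rob}\inP 0$ and $\widehat{d}_{\lrt}/k\inD M_1$.

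The only genuinely substantive ingredient is the \emph{joint} convergence of $(\widehat{d}_{\lrt},\widehat{r}_{\lrt}^{\rob})$ with the asymptotic independence of $M_1$ and $M_3$ — everything else is algebra plus the continuous mapping theorem — and that ingredient is already delivered by Theorem~\ref{thm:distHatrRob}. Were one to supply it directly, the hard part would be this decoupling, and the argument would rest on the usual orthogonality between between- and within-imputation variation: writing $\widehat{\psi}^{\ell}=\widehat{\psi}_{\obs}+\mathcal{B}_\psi^{1/2}Z_\ell+o_p(n^{-1/2})$ with $Z_1,\ldots,Z_m$ i.i.d.\ standard normal given $X_{\obs}$ (Assumption~\ref{ass:properImpModel}) and $\widehat{\psi}^{*}$ asymptotically equivalent to $\overline{\psi}$, a second-order Taylor expansion of each $\loglik^{\ell}$ shows that $\overline{\delta}_{\lrt}-\widehat{\delta}_{\lrt}$, hence $\widehat{r}_{\lrt}^{\rob}$, is governed by the centered block $\{Z_\ell-\overline{Z}\}$, while $\widehat{d}_{\lrt}$ is governed by $\overline{\theta}-\theta_0$, a function of $\overline{Z}$ and of $\widehat{\psi}_{\obs}-\psi^{\star}$; for Gaussian $Z_\ell$ the centered block is exactly independent of $\overline{Z}$ and of $X_{\obs}$, which is precisely the independence that makes $M_1\perp M_3$.
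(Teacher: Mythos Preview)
Your proposal is correct and takes essentially the same approach as the paper: establish the asymptotic independence of $\widehat{d}_{\lrt}$ and $\widehat{r}_{\lrt}^{\rob}$ via the explicit representations obtained in the proofs of Lemma~\ref{thm:distHatrPos} and Theorem~\ref{thm:distHatrRob}, then combine by the continuous mapping theorem. The paper's own proof is a two-line pointer back to those representations, whereas you spell out the continuous-mapping step and the Gaussian orthogonality $(\overline{Z},W)\perp\{Z_\ell-\overline{Z}\}$ that drives the independence; but the substance is the same.
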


The impact of the nuisance parameter $\mathcal{r}_m$ on $D$  
diminishes with $m$ because 
$\widehat{D}_{\lrt}^{\rob}$ and $\widehat{D}_{\lrt}^{+}$
converge in distribution to $M_1 = \chi^2_k/k$ as $m,n\rightarrow\infty$. 
Because $M_3\inP 1$ faster than $M_2\inP 1$, $\widehat{D}_{\lrt}^{\rob}$ is expected to be more robust to $\mathcal{r}_m$. 
Nevertheless, $m$ typically is small in practice (e.g., $m\le 10$), so we cannot ignore the impact of $\mathcal{r}_m$. 
This issue has been largely dealt with in the literature by seeking an $F_{k, {\rm df}}$ distribution to approximate $D$, as in \cite{li91JASA}. However, directly adopting their $\widetilde{\rm df}$ of (\ref{eqt:def_w_r}) leads to a poorer approximation for our purposes; see below. 
A better approximation is to match the first two moments of the denominator of (\ref{eqt:exactNullDistRrob}), 
$1+\mathcal{r}_m M_3$, with that of a scaled $\chi^2$: $a \chi^2_b/b$. 
This yields $a = 1+\mathcal{r}_m$ and $b=(1+\mathcal{r}_m^{-1})^2h(m-1)$, and the approximated $F_{k, \widehat{\df}(\mathcal{r}_m,h)}$,
where 
\begin{equation}\label{eqt:df2New}
	\widehat{\df}(\mathcal{r}_m ,h ) 
		= \left\{ \frac{1+\mathcal{r}_m}{\mathcal{r}_m} \right\}^2 h(m-1)
		= \frac{h(m-1)}{\mathcal{f}_m^2},
\end{equation}
which is appealing because it simply inflates the denominator degrees of freedom $h(m-1)$ by dividing it by 
the square of the finite-$m$ corrected FMI $\mathcal{f}_m = \mathcal{r}_m/(1+\mathcal{r}_m)$. 
The less missing information, the closer $F_{k, \widehat{\df}(\mathcal{r}_m ,h )}$ is to $\chi^2_k/k$, the usual large-$n$ $\chi^2$ test; as mentioned earlier, for small $n$, see \cite{BarnardRubin1999}.

\begin{figure}[t!]
\begin{center}
\includegraphics[width=.95\textwidth]{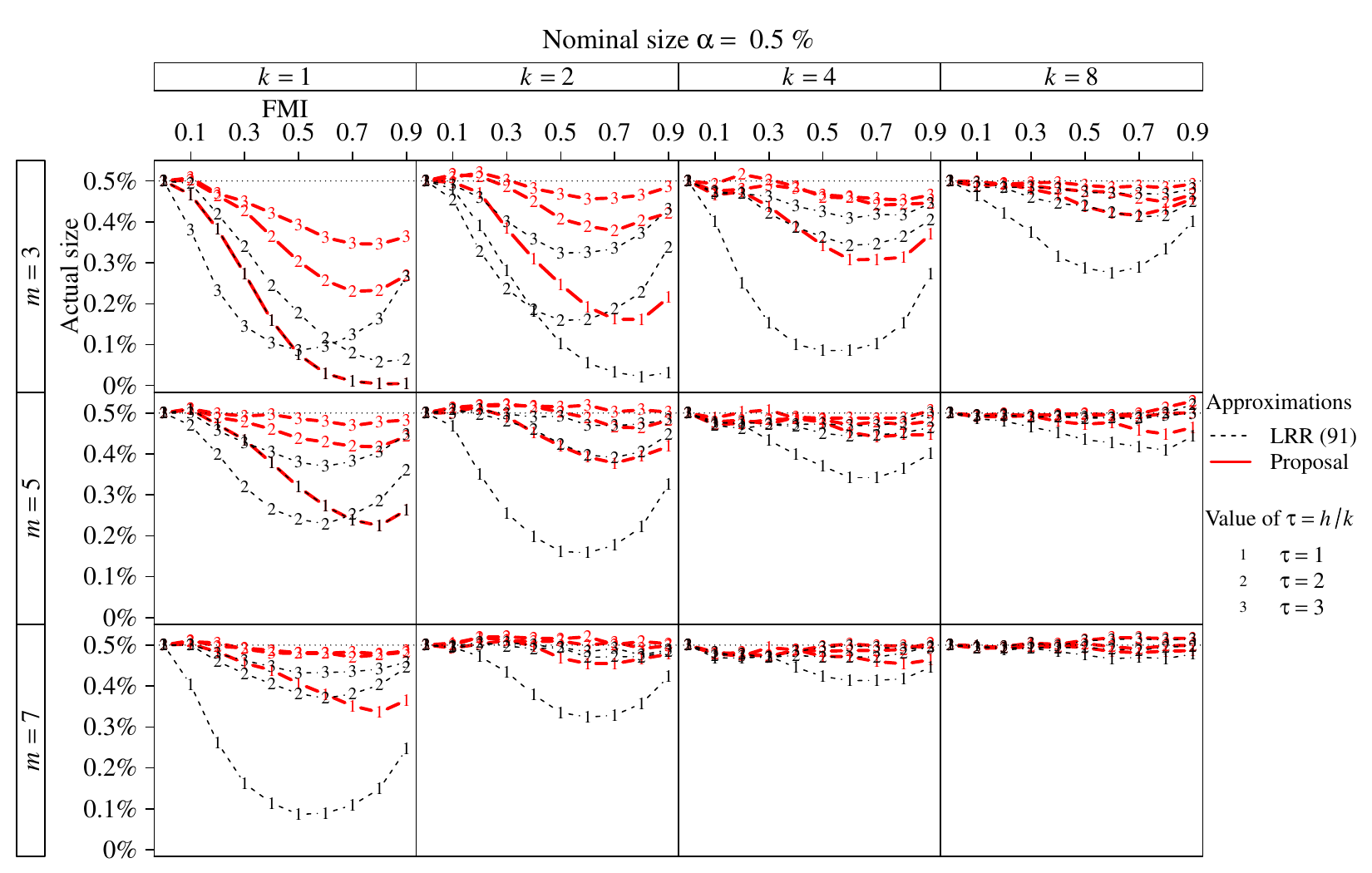}
\vspace{-0.4cm}
\caption{\small 
The performance of two approximated null distributions
when the nominal size is $\alpha=0.5\%$. 
The vertical axis denotes $\widehat{\alpha}$ or $\widetilde{\alpha}$, 
and the horizontal axis denotes the value of $\mathcal{f}_m$.
The number attached to each line denotes the value of $\tau = h/k$.
}
\label{fig:nullDist05pc}
\end{center} 
\end{figure}

To compare the performance of $F_{k,\widehat{\df}(\mathcal{r}_m,h)}$ in (\ref{eqt:df2New}) 
with the existing best approximation $F_{k,\widetilde{\df}(\mathcal{r}_m,h)}$,
as approximations to the limiting distribution of $D$ given in (\ref{eqt:exactNullDistRrob}), 
we compute via simulations
\begin{equation*}\label{eqt:alphaHatTilde}
	\widetilde{\alpha} = \pr\left\{ D > F^{-1}_{k, \widetilde{\df}(\mathcal{r}_m,h)}(1-\alpha)\right\} 
	\quad \text{and} \quad
	\widehat{\alpha}  = \pr\left\{D > F^{-1}_{k, \widehat{\df}(\mathcal{r}_m,h)}(1-\alpha)\right\},
\end{equation*}
where $F^{-1}_{k,\df}(q)$ denotes the $q$-quantile of $F_{k,\df}$.
Note that the experiments assess solely the performance of the finite-$m$ approximation instead of the 
performance of the large-$n$ $\chi^2$-approximation of the asymptotic LRT statistics.
We draw $N=2^{18}$ independent copies $D$ for each of the following possible combinations: $m\in\{3,5,7\}$, $k\in\{1,2,4,8\}$, $\tau = h/k\in\{1,2,3\}$,
$\mathcal{f}_m \in\{ 0, 0.1, \ldots, 0.9 \}$, and following the recommendation of \citet{benjamin2017redefine}, we use both $\alpha\in\{0.5\%, 5\%\}$.
The results for $\alpha=0.5\%$ and $\alpha=5\%$ are shown in Figure \ref{fig:nullDist05pc} and Figure \ref{fig:nullDist5pc} of the Appendix, respectively.
In general, $\widehat{\alpha}$ approximates $\alpha$ much better than $\widetilde{\alpha}$ does, 
especially when $m,k,h$ are small. 
When $m,h$ are larger, they perform similarly   
because both $F_{k, \widetilde{\df}(\mathcal{r}_m,h)}$ and $F_{k, \widehat{\df}(\mathcal{r}_m,h)}$ 
get closer to $\chi^2_k/k$. 
However, the performance of $\widetilde{\alpha}$ and $\widehat{\alpha}$
is not monotonic in $\mathcal{f}_m$. 
The performance of $F_{k,\widehat{\df}(\mathcal{r}_m,h)}$ 
is particularly good for $0\%\lesssim \mathcal{f}_m \lesssim 30\%$. 
Consequently, we recommend using $F_{k,\widehat{\df}(\widehat{r}_{\lrt}^{\rob},h)}$ 
as an approximate null distribution for 
$\widehat{D}_{\lrt}^{\rob}$,  
and $F_{k,\widehat{\df}(\widehat{r}_{\lrt}^{+},k)}$ for $\widehat{D}_{\lrt}^{+}$, 
as employed in the rest of this paper. However, these approximations obviously suffer from the usual ``plug-in problem" by ignoring the uncertainty in estimating $\mathcal{r}_m$. 
Because $F_{k, {\rm df}}$ is not  too sensitive to the value of ${\rm df}$ once it is reasonably large ($\df\ge 20$), the ``plug-in problem" is less an issue here than in many other contexts, 
leading to acceptable approximations, as empirically demonstrated in Section~\ref{sec:example}. Nevertheless, further improvements 
should be sought, especially for dealing with the violation of the EFMI assumption, which would likely make the performance of our tests deteriorate with large $k$ or $h$, in contrast to the results shown in Figure~\ref{fig:nullDist05pc}; 
see \cite{chanSMI} for a possible remedy.

\section{Computational Considerations and Comparisons}\label{sec:comp}

\subsection{Computationally Feasible Combining Rule}\label{sec:compFeasibleComRule}
For many real-world data sets, 
$X$ is an $n\times p$ matrix, with rows indicating subjects and columns indicating attributes. 
We write $X=(X_1,\ldots,X_n)^{\T}$, and its sampling model by $f_n(X\mid\psi)$. 
Correspondingly, the $\ell$th imputed data set is
$X^{(\ell)} = (X^{(\ell)}_{1},\ldots,X^{(\ell)}_{n})^{\T}$.
Define the stacked data set by
$X^{(1:m)} = [(X^{(1)})^\T,\ldots,(X^{(m)})^\T]^\T$, 
a $mn\times p$ matrix, which is conceptually different from the collection of data sets 
$\{X^{(1)},\ldots,X^{(m)}\}$. 
Assuming that the rows of $X$ are independent, 
we can compute (\ref{eqt:jointDensity}) as 
\begin{equation}\label{eqt:prod_joint_density}
	\overline{\loglik}(\psi)
		= \frac{1}{m} \log f_{mn}(X^{(1:m)}\mid \psi).
\end{equation}
Consequently, as long as the user's complete-data procedures can handle size $mn$ instead of $n$, the user can apply them to $X^{(1:m)}$ to obtain 
$\widehat{D}^{+}_{\lrt}$ and $\widehat{D}^{\rob}_{\lrt}$ in (\ref{eqt:newstat}).

In many applications, the rows correspond to individual subjects. 
Thus, the row-independence assumption typically holds for \textit{arbitrary} $n$. Hence, we can extend from $n$ to $mn$, assuming the user's complete-data procedure is not size-limited. Even if this is not true, 
(\ref{eqt:prod_joint_density}) can still hold approximately
under some regularity conditions;
see Appendix~\ref{supp:results}, where we also reveal a subtle, but important difference 
between the computation formulae (\ref{eqt:jointDensity}) and (\ref{eqt:prod_joint_density}).

Similar to $\mathcal{D}_{\lrt}$ in (\ref{eqt:computeCode}), we define complete-data functions 
\begin{gather}
	\mathcal{D}_{\lrt,0}(X)
		= 2\log f(X\mid \widehat{\psi}_0(X)),  \quad
	\mathcal{D}_{\lrt,1}(X)
		= 2\log f(X\mid \widehat{\psi}(X)), \label{eqt:computeCode01}
\end{gather}
the only input of which is the data set $X$.
Clearly, $\mathcal{D}_{\lrt}(X)=\mathcal{D}_{\lrt,1}(X)-\mathcal{D}_{\lrt,0}(X)$.
The subroutine for evaluating the complete-data LRT function 
$X\mapsto\mathcal{D}_{\lrt}(X)$ is usually available, as is the subroutine for 
$X\mapsto\mathcal{D}_{\lrt,1}(X)$, for example, the function \texttt{logLik} in R extracts 
the maximum of the complete data log-likelihood for objects belonging to 
classes \texttt{"glm"}, \texttt{"lm"}, \texttt{"nls"}, and \texttt{"Arima"}.

Algorithms \ref{algo:MI_LRT_rob} and \ref{algo:MI_LRT_pos} 
compute $\widehat{D}^{\rob}_{\lrt}$ and $\widehat{D}_{\lrt}^{+}$, respectively.  
We recommend using the robust MI LRT in Algorithm \ref{algo:MI_LRT_rob}, 
because it has the best theoretical guarantee.  
The second test can be useful when $\mathcal{D}_{\lrt}$ is available but $\mathcal{D}_{\lrt,1}$ is not.

\begin{algorithm}[t!] 
\small
\caption{(Robust) MI LRT statistic $\widehat{D}^{\rob}_{\lrt}$}\label{algo:MI_LRT_rob}
\SetAlgoVlined
\DontPrintSemicolon
\textbf{Input}:  
{Data sets $X^{(1)},\ldots,X^{(m)}$; $h,k$; 
functions $\mathcal{D}_{\lrt,1}$, $\mathcal{D}_{\lrt}$ in (\ref{eqt:computeCode01}), (\ref{eqt:computeCode}).} \;
\Begin{
    Stack the data sets to form $X^{(1:m)} = [(X^{(1)})^\T,\ldots,(X^{(m)})^\T]^\T$. \\ 
    Find $\overline{\delta}_{\lrt} =\sum_{\ell=1}^m\mathcal{D}_{\lrt,1}(X^{(\ell)})/m$,
			$\widehat{\delta}_{\lrt} = \mathcal{D}_{\lrt,1}(X^{(1:m)})/m$, 
			$\widehat{d}_{\lrt} = \mathcal{D}_{\lrt}(X^{(1:m)})/m$. \\
	Calculate 
	$\widehat{r}_{\lrt}^{\rob}$ according to (\ref{eqt:def_hatr_rob}), and 
	$\widehat{D}^{\rob}_{\lrt}$ according to (\ref{eqt:final_proposal}) and (\ref{eqt:newstat}). \\
	Calculate $\widehat{\df}(\widehat{r}^{\rob}_{\lrt},h)$ according to (\ref{eqt:df2New}).\\
	Compute the $p$-value as $1-F_{k,\widehat{\df}(\widehat{r}^{\rob}_{\lrt},h)}(\widehat{D}_{\lrt}^{\rob})$. 
}	
\end{algorithm}

\begin{algorithm}[t!] 
\caption{MI LRT statistic $\widehat{D}_{\lrt}^{+}$}\label{algo:MI_LRT_pos}
\small
\SetAlgoVlined
\DontPrintSemicolon
\textbf{Input}:  
{Data sets $X^{(1)},\ldots,X^{(m)}$; 
 $k$;  
function $\mathcal{D}_{\lrt}$ in (\ref{eqt:computeCode}).} \;  
\Begin{
	Stack the data sets to form $X^{(1:m)} = [(X^{(1)})^\T,\ldots,(X^{(m)})^\T]^\T$.\\ 
	Find $\overline{d}_{\lrt} = \sum_{\ell=1}^m \mathcal{D}_{\lrt}(X^{(\ell)})/m$ and $\widehat{d}_{\lrt} = m^{-1}\mathcal{D}_{\lrt}(X^{(1:m)})$.\\
	Calculate 
	$\widehat{r}_{\lrt}^{+}$ according to (\ref{eqt:rLLpos}), and 
	$\widehat{D}_{\lrt}^{+}$ according to (\ref{eqt:final_proposal}) and (\ref{eqt:newstat}).  \\
	Calculate $\widehat{\df}(\widehat{r}^+_{\lrt},k)$ according to (\ref{eqt:df2New}).\\
	Compute the $p$-value as $1- F_{k,\widehat{\df}(\widehat{r}^+_{\lrt},k)}(\widehat{D}_{\lrt}^{+}).$
}	
\end{algorithm}

\subsection{Computational Comparison with Existing Tests}\label{sec:comparison}
Different MI tests require different computing subroutines, 
for example, $\mathcal{D}_{\lrt}$, $\widetilde{\mathcal{D}}_{\lrt}$, $\mathcal{D}_{\lrt,1}$,   
\begin{gather*}		
	\mathcal{M}_{\wt}(X) = \left\{ \widehat{\theta}(X), U(X) \right\} \qquad\text{and}\qquad
	\mathcal{M}_{\lrt}(X) = \left\{ \widehat{\psi}(X) , \widehat{\psi}_0(X) \right\} , 
\end{gather*}
where the unnormalized density can be used in $\mathcal{D}_{\lrt,1}$. 
We summarize the computing requirement in Table \ref{table:prosCons}. 
We also compare the following statistical and computational properties
of various MI test statistics and various estimators of $\mathcal{r}_m$:
\begin{itemize}[noitemsep]
\vspace{-0.2cm}
	\item (Inv) The MI test is invariant to re-parametrization of $\psi$. 
	\item (Con) The estimator of $\mathcal{r}_m$ is consistent, regardless of whether or not $H_0$ is true.
	\item ($\geq 0$) The test statistic and estimator of $\mathcal{r}_m$ 
			are always non-negative.  
	\item (Pow) The MI test has high power to reject $H_0$ under $H_1$. 
	\item (Def) The MI test statistic is well defined and numerically well conditioned. 
	\item (Sca) The MI procedure requires that users deal with scalars only.
	\item (EFMI) Whether EFMI is assumed for $\theta$ or for $\psi$. 
\end{itemize}

\begin{table}
\caption{\small Computational requirements and statistical properties of 
MI tests.
The symbol ``$+$'' (resp.~``$-$'') means that a test
has (resp.~does not have) the indicated property;
see Section~\ref{sec:comparison} for detailed descriptions.
WT-1 \citep{rubin2004multiple,li91} and 
LRT-1 \citep{mengRubin92} are existing tests. 
LRT-2 and LRT-3 are the proposed tests, which can be computed by Algorithms \ref{algo:MI_LRT_pos} and \ref{algo:MI_LRT_rob}, respectively. LRT-3 is recommended.}
\setlength{\tabcolsep}{3.5pt}
\renewcommand{\arraystretch}{0.5}
\begin{center}
\begin{tabular}{cccc ccccccc} \toprule
&&&&\multicolumn{7}{c}{Properties}\\
\cmidrule(r){5-11}
Test & Statistic & Distribution & Routine & Inv & Con & $\geq 0$ & Pow & Def & Sca & EFMI \\ 
\cmidrule(r){1-11}
WT-1  & $D_{\wt}(T)$ & $\approx F_{k, \widetilde{\df}(\widetilde{r}_{\wt}',k)}$ & $\mathcal{M}_{\wt}$ & $-$ & $+$ & $+$ & $-$ & $-$ & $-$ & $\theta$ \\
LRT-1 & $\widetilde{D}_{\lrt}(\widetilde{r}_{\lrt})$ & $\approx F_{k, \widetilde{\df}(\widetilde{r}_{\lrt},k)}$ & $\mathcal{M}_{\lrt},\widetilde{\mathcal{D}}_{\lrt}$ & $-$ & $-$ & $-$ & $-$ & $+$ & $-$ & $\theta$\\
LRT-2 & $\widehat{D}_{\lrt}(\widehat{r}^+_{\lrt})$ & $\approx F_{k, \widehat{\df}(\widehat{r}_{\lrt}^+,k)}$ & $\mathcal{D}_{\lrt}$ & $+$ & $-$ & $+$ & $-$ & $+$ & $+$ & $\theta$ \\
LRT-3 & $\widehat{D}_{\lrt}(\widehat{r}^{\rob}_{\lrt})$ & $\approx F_{k, \widehat{\df}(\widehat{r}_{\lrt}^{\rob},k)}$ & $\mathcal{D}_{\lrt},\mathcal{D}_{\lrt,1}$ & $+$ & $+$ & $+$ & $+$ & $+$ & $+$ & $\psi$ \\
\bottomrule
\end{tabular}
\end{center}
\label{table:prosCons}
\end{table}

In summary, our proposed LRT-2
is the most attractive computationally. 
If the user is willing to make stronger assumptions, 
our proposed LRT-3 has better statistical properties, and is still computationally feasible. 
In practice, we recommend using LRT-3. 
We also present other existing MI tests and compare our proposals with them
in Appendix \ref{sec:otherMItests}.

\subsection{Summary of Notation}
For ease of referencing, we summarize all major notation used in the paper.
Recall that 
$\psi\in\mathbb{R}^h$ is the model parameter, and 
$\theta$ is the parameter of interest.
We would like to test $H_0:\theta=\theta_0$ against $H_1:\theta\neq \theta_0$. 

\begin{itemize}[noitemsep]
	\item Complete-data Estimators and Test Statistics:
			\begin{itemize}[noitemsep]
				\item $\widehat{\theta}(X)$ and $U(X)$: MLE of $\theta$ and its variance estimator.
				\item $\widehat{\psi}(X)$ and $\widehat{\psi}_0(X)$:
						the unrestricted and $H_0$-restricted MLEs of $\psi$.
				\item $d_{\wt}(\widehat{\theta},U) = (\widehat{\theta}-\theta_0)^\T U^{-1} (\widehat{\theta}-\theta_0)$: the Wald test statistic.
				\item $d_{\lrt}(\widehat{\psi}_0,\widehat{\psi}\mid X) 
						= 2 \log \{ f(X\mid\widehat{\psi}) / f(X\mid\widehat{\psi}_0)\}$:
						the LRT statistic.
			\end{itemize}
	\item Complete-data Functions (or Software Routines): 
			\begin{itemize}[noitemsep]
				\item $\mathcal{M}_{\wt}(X)= \{ \widehat{\theta}(X), U(X) \}$
					and $\mathcal{M}_{\lrt}(X)= \{ \widehat{\psi}(X) , \widehat{\psi}_0(X) \}$.
				\item $\widetilde{\mathcal{D}}_{\lrt}(X,\psi_0,\psi)= d_{\lrt}(\psi_0,\psi\mid X)$:
						a nonstandard LRT function/routine. 
				\item $\mathcal{D}_{\lrt}(X)= d_{\lrt}(\widehat{\psi}_0(X),\widehat{\psi}(X)\mid X)$:
						the standard LRT function/routine.
				\item $\mathcal{D}_{\lrt,1}(X ) = 2\log f(X\mid \widehat{\psi}(X))$:
						the (scaled) maximum log-likelihood.
			\end{itemize}	
	\item MI Statistics:
			\begin{itemize}[noitemsep]
				\item $\widehat{\theta}^{(\ell)}$, $U^{(\ell)}$, 
						$\widehat{\psi}_{0}^{(\ell)}$, $\widehat{\psi}^{(\ell)}$, 
						$d^{(\ell)}_{\wt}$, $d_{\lrt}^{(\ell)}$:
						the imputed values of $\widehat{\theta}$, $U$,
						$\widehat{\psi}_0$, $\widehat{\psi}$, 
						$d_{\wt}(\widehat{\theta},U)$, $d_{\lrt}(\widehat{\psi}_0,\widehat{\psi}\mid X)$
						using the imputed data set $X^{(\ell)}$ for each $\ell$.
				\item $\overline{\theta}$, $\overline{U}$, $\overline{\psi}_0$, $\overline{\psi}$,
						$\overline{d}_{\wt}$, $\overline{d}_{\lrt}$:
						the averages (over $\ell$)
						of $\widehat{\theta}^{(\ell)}$, $U^{(\ell)}$, 
						$\widehat{\psi}_{0}^{(\ell)}$, $\widehat{\psi}^{(\ell)}$, 
						$d^{(\ell)}_{\wt}$, $d_{\lrt}^{(\ell)}$.
				\item $T = \overline{U} + (1+1/m)B$, where
						$B = \sum_{\ell=1}^m (\widehat{\theta}^{(\ell)}-\overline{\theta})(\widehat{\theta}^{(\ell)}-\overline{\theta})^\T/(m-1)$.
				\item $\overline{d}_{\wt}^{\prime}= \sum_{\ell=1}^m d_{\wt}(\widehat{\theta}^{(\ell)},\overline{U})/m$ and $\widetilde{d}_{\wt}^{\prime}=  d_{\wt}(\overline{\theta},\overline{U})$.
				\item $\widetilde{d}_{\lrt} = \sum_{\ell=1}^m \widetilde{\mathcal{D}}_{\lrt}(X^{(\ell)},\overline{\psi}_{0} ,\overline{\psi})/m$: an existing pooled LRT statistic. 
				\item $\widehat{d}_{\lrt}= \mathcal{D}_{\lrt}(X^{(1:m)})/m$: 
						the proposed pooled LRT statistic. 
				\item $\overline{\delta}_{\lrt} =\sum_{\ell=1}^m\mathcal{D}_{\lrt,1}(X^{(\ell)})/m$
						and $\widehat{\delta}_{\lrt} = \mathcal{D}_{\lrt,1}(X^{(1:m)})/m$:
						two proposed ways for pooling maximized log-likelihood.
			\end{itemize}		
	\item Estimators of $\mathcal{r}_m$:
			\begin{itemize}[noitemsep]
				\item $\widetilde{r}_{\wt}^{\prime} = (m+1)( \overline{d}_{\wt}^{\prime}- \widetilde{d}_{\wt}^{\prime})/\{k(m-1)\}$
				\add{\citep{rubin2004multiple}}.
				\item $\widetilde{r}_{\lrt} = (m+1) ( \overline{d}_{\lrt} - \widetilde{d}_{\lrt} ) /\{k(m-1)\}$ 
				\add{\citep{mengRubin92}}.
				\item $\widehat{r}_{\lrt}^+ = \max[0,(m+1) ( \overline{d}_{\lrt} - \widehat{d}_{\lrt} )/\{k(m-1)\}]$:
						our first proposal.
				\item $\widehat{r}_{\lrt}^{\rob}
						= (m+1) (\overline{\delta}_{\lrt} - \widehat{\delta}_{\lrt})/\{h(m-1)\}$:
						our second proposal.
			\end{itemize}		
	\item MI Test Statistics for Testing $H_0$ against $H_1$:
			\begin{itemize}[noitemsep]
				\item (WT-1) $D_{\wt}(T) = d_{\wt}(\overline{\theta},T)/k$:
						the classical MI Wald test. 
				\item (LRT-1) $\widetilde{D}_{\lrt}(\widetilde{r}_{\lrt}) = {\widetilde{d}_{\lrt}}/\{k(1+\widetilde{r}_{\lrt})\}$: 
						the existing MI LRT. 
				\item (LRT-2) $\widehat{D}_{\lrt}(\widehat{r}_{\lrt}^+) = {\widehat{d}_{\lrt}}/\{k(1+\widehat{r}_{\lrt}^+)\}$:
						our first proposal.
 				\item (LRT-3) $\widehat{D}_{\lrt}(\widehat{r}_{\lrt}^{\rob}) = {\widehat{d}_{\lrt}}/\{k(1+\widehat{r}_{\lrt}^{\rob})\}$:
						our second proposal.
			\end{itemize}
\end{itemize}

\vspace{-0.3cm}
\section{Empirical Investigation and Findings}\label{sec:example}
\subsection{Monte Carlo Experiments With EFMI}\label{sec:eg_lrt_normal}
Let $X_1,\ldots,X_n\sim\mathcal{N}_p(\mu,\Sigma)$ independently, 
where $\mu=(\mu_1,\ldots,\mu_p)^{\T}$.
Assume that only $n_{\obs} = \lfloor (1-\mathcal{f})n \rfloor$ data points are observed. 
Let $X_{\obs} = \{X_i: i=1,\ldots, n_{\obs}\}$ and
$X_{\mis} = \{X_i: i=n_{\obs}+1, \ldots, n\}$.
We want to test $H_0: \mu_1 = \cdots = \mu_p$.

Obviously,  one may directly use the observed data set to construct the LRT statistic $D_{\lrt}$ without MI.
Thus, it is regarded as a benchmark (denoted by LRT-0).  
The tests 
WT-1 and LRT-1,2,3 listed in Table \ref{table:prosCons} are investigated.
We perform MI using a Bayesian model with a multivariate Jeffreys prior on $(\mu,\Sigma)$, 
this is, $f(\mu,\Sigma)\propto |\Sigma|^{-(p+1)/2}$. 
The imputation procedure is detailed in Appendix \ref{sec:eg_lrt_normal_MI}.
We study the impact of the parametrization on
different test statistics.

\begin{itemize}[noitemsep]
\vspace{-0.3cm}
\item Parametrizations of $\theta$ for the Wald tests: 
(i) $\theta = \left(\mu_2-\mu_1, \ldots, \mu_p-\mu_{p-1}\right)^{\T}$;
(ii) $\theta = \left(\mu_2/\mu_1-1, \ldots, \mu_p/\mu_{p-1}-1\right)^{\T}$; and
(iii) $\theta = \left(\mu_2^3-\mu_1^3, \ldots, \mu_p^3-\mu_{p-1}^3\right)^{\T}$. 
For any case above, $H_0$ can be expressed as $\theta=(0,\ldots,0)^{\T}$. 
\item Parametrizations of $\psi$ for LRTs:
(i) $\psi = \left\{ \mu;\Sigma \right\}$; 
(ii) $\psi = \{\sqrt{\sigma_{ii}}/\mu_i, 1\le i \le p; \Sigma\}$; and 
(iii) $\psi=\left\{\mu^{\T}\Sigma^{-1/2}; \Sigma^{-1}\right\}$,
where $\Sigma=(\sigma_{ij})$  
and $\Sigma^{1/2}$ is the square root of $\Sigma$ via the spectral method. 
The dimension of $\psi$ is $h=(p^2+3p)/2$.
\end{itemize}
We set $\Sigma = \sigma^2 \{(1-\rho)I_p + \rho \mathbf{1}_p\mathbf{1}_p^{\T}\}$,
$\mathcal{f}=0.5$, $p=2$, $\rho=0.8$, $\sigma^2=5$, and $\mu=(-2+\delta,-2+2\delta)^{\T}$
for different values of $m\in\{3,10,30\}$, $n\in\{100,400,1600\}$,
and $\delta=\mu_2-\mu_1\in[0,4]$.
All simulations are repeated $2^{12}$ times. 
The empirical power functions 
for $\alpha=0.5\%$ tests  
are plotted in Figure \ref{fig:power_05pc}.
The results for $\alpha=5\%$ tests are deferred to Table~\ref{fig:power_5pc} of the Appendix.

\begin{figure}[t!]
\begin{center}
\includegraphics[width=.95\textwidth]{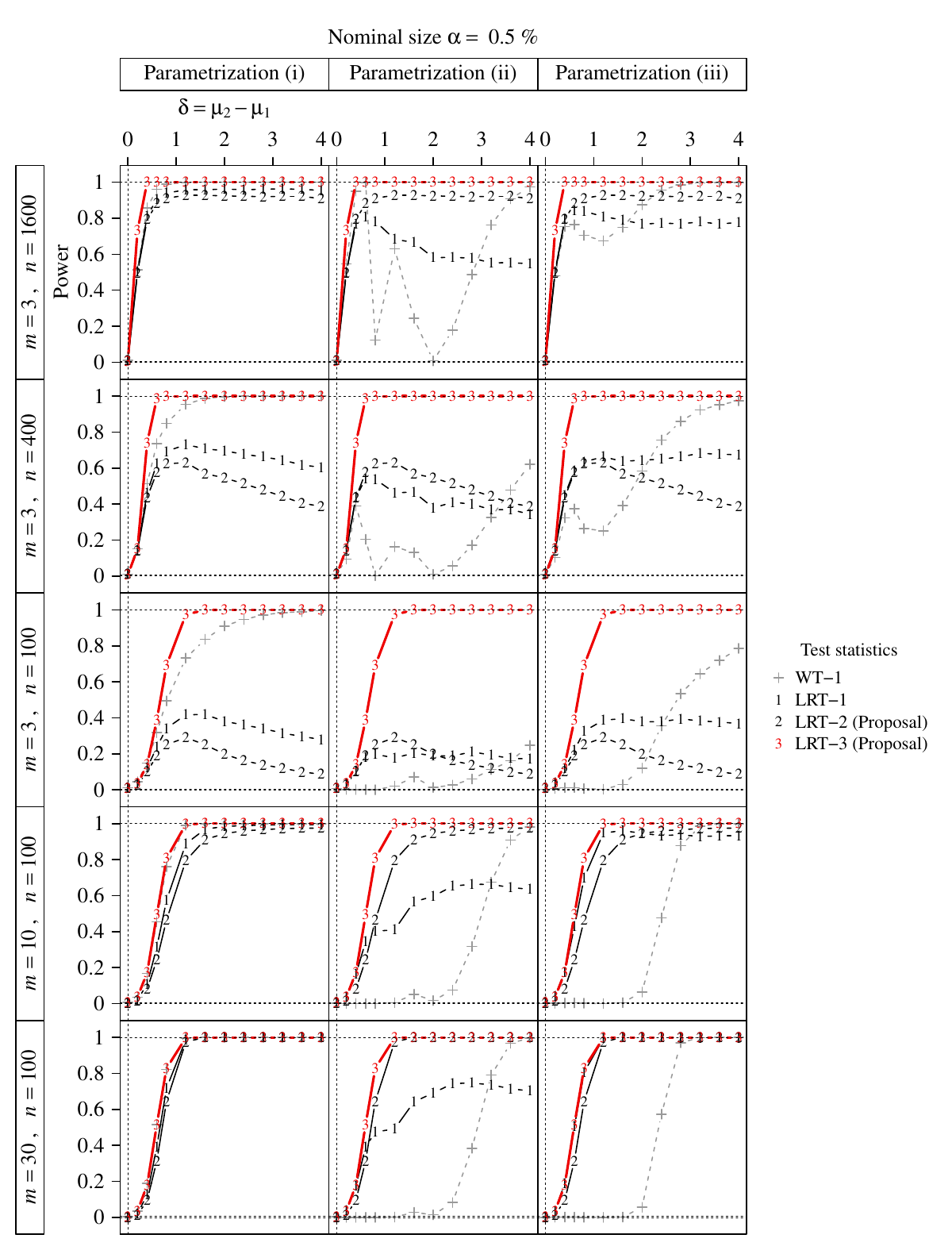} 
\vspace{-0.6cm}
\caption{\small 
The power curves under nominal size $\alpha=0.5\%$. 
In each plot, the vertical axis denotes the power,
and the horizontal axis denotes the value of $\delta=\mu_2-\mu_1$.
}
\label{fig:power_05pc}
\end{center} 
\end{figure}

In general, WT-1 exhibits monotonically increasing power as $\delta$ increases, and its performance is affected significantly by parametrization.  
Indeed, the power can be as low as zero 
when $1\lesssim \delta \lesssim 2$ under parametrizations (ii) and (iii).
Under parametrization (ii), 
LRT-1 is not powerful, even for large $\delta$. 
On the other hand, 
our first proposed test statistic LRT-2 performs
better than LRT-1, at least for large $m$;
however, they also lose a significant amount of power when $m$ is small.
Our recommended proposal LRT-3 performs best in all cases. 
The superiority of LRT-3 is particularly striking when $m$ is small, this is, $m=3$.

We also investigate
(a) the distribution of the $p$-value,
(b) the empirical size
$\widehat{\alpha}$ in comparison to the nominal type-I error $\alpha$, 
(c) the empirical size-adjusted power \citep{BBBS2015},
(d) the robustness of our proposed estimators of $\mathcal{r}_m$, and 
(e) the performance of other existing MI tests.  
The results are shown in Appendix \ref{sec:eg_lrt_normal_MI}, 
all of which indicate that our proposed tests perform best.

\subsection{Monte Carlo Experiments Without EFMI}\label{sec:MCexpUFMI}
To check how robust various tests are to the assumption of EFMI, we simulate $X_{i}  = (X_{i1}, \ldots, X_{ip})^{\T} \sim \mathcal{N}_p(\mu, \Sigma)$ independently 
for $i=1,\ldots, n$. 
Let $R_{ij}$ be defined by $R_{ij}=1$ if $X_{ij}$ is observed, and $R_{ij}=0$ otherwise. 
Suppose that the first variable $X_{\cdot 1}$ is always observed, and the rest form a monotone missing pattern, as defined by a logistic model on the missing propensity:
$\pr\left( R_{ij} = 0 \mid R_{i,j-1} = a \right) = 
\left[1+ \exp(  \alpha_0 + \alpha_1 X_{i,j-1}) \right]^{-1}$
(for $j=2,\ldots,p$) when $a=1$. This probability is zero when $a=0$ (i.e., nothing is missing).   
If $\alpha_1=0$, the data are missing completely at random (MCAR); otherwise they are missing at random (MAR); see \cite{rubin1976}. The imputation procedure is given in Appendix \ref{sec:MCexpUFMI_MI}.

We test $H_0: \mu = \bm{0}_p$ against $H_1: \mu \not{=} \bm{0}_p$.
We set 
$\mu = \delta \bm{1}_p$, where $\delta \in [0,0.6]$; 
$\Sigma_{ij} = 0.5^{|i-j|}$, for $i,j=1,\ldots,p$; 
$n=500$; 
$m\in\{3,5\}$; 
$p=5$; and
$(\alpha_0, \alpha_1) \in \{ (2,-1) , (1,0)\}$.
Our model treats $\Sigma$ as unknown, and hence $k=p$ and $h=(3p+p^2)/2$. 	
Under $H_0$ and MAR, 
the FMI, i.e., 
the eigenvalues of $\mathcal{B}_{\theta}\mathcal{T}_{\theta}^{-1}$, 
are $(0, 19\%, 34\%, 45\%, 55\%)$. 
Thus, the assumption of EFMI does not hold.

In this experiment, we also compare the performance of WT-1 and LRT-1,2,3.
For reference, 
the complete-case (asymptotic) LRT using $\left\{ X_{i} : R_{i1}=\cdots=R_{ip}=1 \right\}$, denoted by LRT-0, 
is also computed. 
The results are shown in Figure \ref{fig:jmi_3Jun2018_eg4a2}.
The size of LRT-3 is accurate 
when the nominal size is small. 
If the data are MCAR, LRT-0 is valid, 
but with slightly less power.
(LRT-0 is typically invalid without MCAR.) 
The test LRT-3 has the best power-to-size ratio
among all other tests. 
The power-to-size ratio of LRT-2 and LRT-3
become closer to the nominal value $1/0.5\%=200$ as $m$ increases. 
These results indicate that our proposed tests perform well and best,
despite the serious violation of the EFMI assumption.

\begin{figure}[t!]
\begin{center}
\includegraphics[width=1\textwidth]{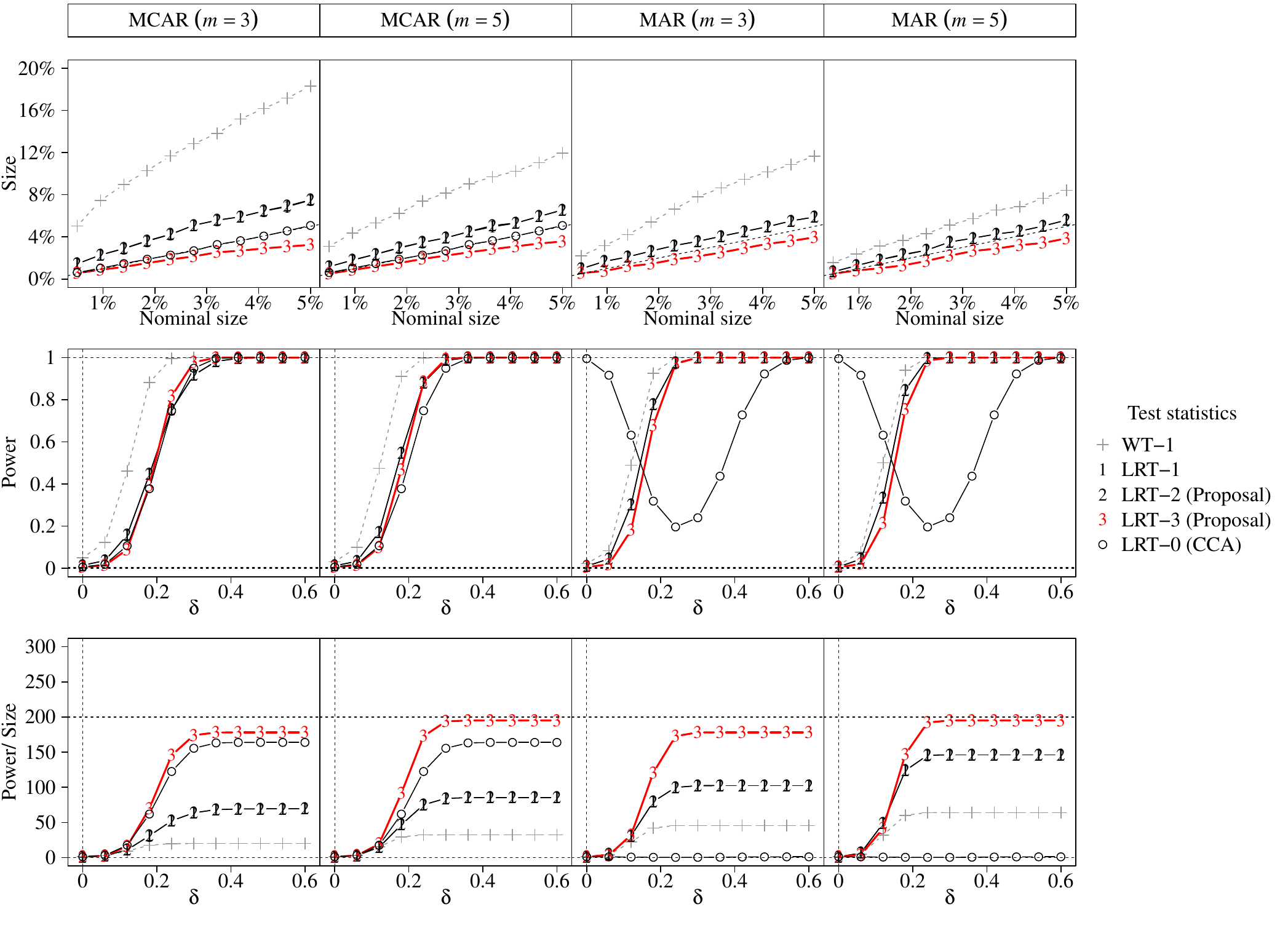} 
\vspace{-1cm}
\caption{\small The empirical size, empirical power, and their ratio. 
The first row of plots show the empirical sizes.
The size of the complete-case test (C2) under MAR is off the chart (always equal to one) because it is invalid. 
The second and third rows of plots show the powers 
and the power-to-size ratios, respectively. The nominal size is $0.5\%$.}
\label{fig:jmi_3Jun2018_eg4a2}
\end{center}
\end{figure}

\section{Conclusion, Limitation and Future Work}\label{sec:conclusion} 
In addition to conducting a general comparative study of MI tests, we have proposed two particularly promising MI LRTs based on $\widehat{D}_{\lrt}^{\rob} = \widehat{D}_{\lrt}(\widehat{r}^{\rob}_{\lrt})$ and
$\widehat{D}_{\lrt}^{+} = \widehat{D}_{\lrt}(\widehat{r}^{+}_{\lrt})$. Both test statistics are non-negative, invariant to parametrizations, and powerful to reject a false $H_0$ (at least for large enough $m$). 
The test $\widehat{D}_{\lrt}^{\rob}$
is the most principled, and has desirable monotonically increasing power as $H_1$ departs from $H_0$. 
However, it is derived under the stronger assumption of EFMI for $\psi$, not just for $\theta$. 
Furthermore, row independence of $X_{\com}$ is needed for ease of computation. 
(With a slightly more computationally demanding requirement, 
$\widehat{D}_{\lrt}(\widehat{r}^{\rob}_{\lrt})$ can be used without the independence assumption.) 
The main advantage of $\widehat{D}_{\lrt}^{+}$ is that it is  easier to compute, because it requires only standard complete-data computer subroutines for LRTs. One drawback is that the ad hoc fix $\widehat{r}^{+}_{\lrt} = \max(0,\widehat{r}_{\lrt})$
is inconsistent, in general. However, the inconsistency does not significantly affect the asymptotic power,  at least in our experiments. 
Although $\widehat{D}_{\lrt}^{+}$
and $\widehat{D}_{\lrt}^{\rob}$
offer significant improvements over existing options, 
more research is needed, for the reasons listed below:
\begin{itemize}[noitemsep]
\vspace{-0.3cm}
\item When the missing-data mechanism is not ignorable, but the imputers fail to fully take that into account, the issue of uncongeniality becomes critical \citep{meng1994}. \citet*{XXMeng2017} provide theoretical tools to address this issue in the context of estimation, and research is needed to extend their findings to the setting of hypothesis testing. 
	\item  Violating the EFMI assumption may not  
	invalidate a test, but it will affect its power. 
	Thus, it is desirable to explore MI tests without assuming EFMI. 
	\item The robust $\widehat{D}_{\lrt}^{\rob}$
			relies on a stronger assumption of EFMI on $\psi$. 
			We can modify it so only EFMI on $\theta$ is required, 
			but the modification may be very difficult to compute, 
			and may require that users have access to nontrivial complete-data procedures.
			Hence, a computationally feasible robust test that only assumes EFMI on $\theta$
			needs to be developed. 
	\item Because the FMI is a fundamental nuisance parameter and there is no (known) pivotal quantity, all MI tests are just approximations.
	If FMI is large or $m$ is small, 
	they may perform poorly.
	Thus,  
	seeking powerful MI tests that are least affected by FMI
	is of both theoretical and practical interest.
\end{itemize}

{\small
\vskip 5pt
\noindent {\large\bf Supplementary Material}\\
Appendix \ref{supp:results} contains additional theoretical results
and details of numerical examples.
Appendix \ref{sec:proof} contains proofs of the main results.
The R code is provided online.

\par
\vskip 5pt
\noindent {\large\bf Acknowledgments}\\
Meng thanks the NSF and JTF for their partial financial support. 
He is also grateful for 
Keith's (Kin Wai's) creativity and diligence, 
which led to the remedies presented here, and which are also a part of Keith's thesis. 
Chan thanks the University Grant Committee of HKSAR for its partial financial support. 
\par
}
\markboth{\hfill{\footnotesize\rm Kin Wai Chan AND Xiao-Li Meng} \hfill}
{\hfill {\footnotesize\rm Multiple imputation LR tests} \hfill}

\vspace{-0.3cm}
\bibhang=1.7pc
\bibsep=2pt
\fontsize{9}{14pt plus.8pt minus .6pt}\selectfont
\renewcommand\bibname{\large \bf References}
\expandafter\ifx\csname
natexlab\endcsname\relax\def\natexlab#1{#1}\fi
\expandafter\ifx\csname url\endcsname\relax
  \def\url#1{\texttt{#1}}\fi
\expandafter\ifx\csname urlprefix\endcsname\relax\def\urlprefix{URL}\fi

\lhead[\footnotesize\thepage\fancyplain{}\leftmark]{}\rhead[]{\fancyplain{}\rightmark\footnotesize{} }
\bibliographystyle{rss}
{\footnotesize
\bibliography{myRef.bib}
}

\vskip .65cm
\noindent
Department of Statistics, The Chinese University of Hong Kong.
\vskip 2pt
\noindent
E-mail: {\ttfamily kinwaichan@cuhk.edu.hk}
\vskip 2pt
\noindent
Department of Statistics, Harvard University.
\vskip 2pt
\noindent
E-mail: {\ttfamily meng@stat.harvard.edu}

\newpage
\begin{center}
\Large Supplementary Material
\end{center}

\appendix
\fontsize{12}{14pt plus.8pt minus .6pt}\selectfont
\section{Supplementary Results}\label{supp:results}
\subsection{A Complication Caused by Nuisance Parameter}\label{subsec:nusiancePara}
This section supplement the discussion of Section \ref{sec:rob} in the main article.
Recall that the likelihood function $\loglik^{(\ell)}(\cdot)$
is based on both observed data $X_{\obs}$ and imputed data $X_{\mis}^{(\ell)}$, 
which varies across $\ell$.
Hence, each imputed likelihood $\loglik^{(\ell)}(\cdot)$ is associated with 
a (imputation-specific) pseudo parameter $\psi^{(\ell)}$, 
may vary across $\ell=1,\ldots, m$.

To see the source of the negativity of $\widehat{r}_{\lrt}$,
we extend $\overline{\loglik}(\psi)$ in (\ref{eqt:jointDensity}) to 
\begin{equation}\label{eqt:general_Lbar}
	\overline{\loglik}(\psi^{(1)}, \ldots, \psi^{(m)}) = \frac{1}{m}\sum_{\ell=1}^m \loglik^{(\ell)}(\psi^{(\ell)}).
\end{equation}
Using the ``log-likelihood'' $\overline{\loglik}(\psi^{(1)}, \ldots, \psi^{(m)})$, 
we can construct, at least conceptually, 
four hypotheses $H_0^{0}$, $H_0^{1}$, $H_1^{0}$, $H_1^{1}$ 
defined in Table \ref{table:4hypotheses}.
Each of them consists of zero, one or two of the constraints 
$\mathcal{C}_0: \theta^{(1)} = \cdots = \theta^{(m)} = \theta_0$
and 
$\mathcal{C}^0: \psi^{(1)} = \cdots = \psi^{(m)}$, 
where $\theta^{(\ell)} = \theta(\psi^{(\ell)})$ is the interested part of $\psi^{(\ell)}$ for each $\ell$.
The constraint $\mathcal{C}_0$ is equivalent to $H_0$, and the constraint $\mathcal{C}^0$ means that all $\psi^{(\ell)}$s are equal, and hence it is effectively equivalent to $\mathcal{r}=0$, 
i.e., no missing information. The relationships among $H_0^{0}$, $H_0^{1}$, $H_1^{0}$, $H_1^{1}$ can be visualized in Figure \ref{fig:4hypotheses}.
Define the maximized value of $\overline{\loglik}(\psi^{(1)}, \ldots, \psi^{(m)})$
under hypothesis $H\in\{H_0^{0},H_0^{1},H_1^{0},H_1^{1}\}$ by $\mathbb{L}(H)$.
Then we can re-express $(\overline{d}_{\lrt} - \widehat{d}_{\lrt})/2$ as
\begin{equation}\label{eqt:diff_of_d_function_of_4_liklihood}
	(\overline{d}_{\lrt} - \widehat{d}_{\lrt} ) /2
	= \left\{\mathbb{L}(H_1^{1}) - \mathbb{L}(H_1^{0})\right\} - \left\{\mathbb{L}(H_0^{1}) - \mathbb{L}(H_0^{0})\right\}.
\end{equation}
Whereas the two bracketed terms in (\ref{eqt:diff_of_d_function_of_4_liklihood}) 
are non-negative as they correspond to two LRT statistics, 
their difference can be negative.

A simple example illustrates this well. 
For the regression model $[Y\mid X_1, X_2] \sim \mathcal{N}(\beta_0 + \beta_1 X_1 + \beta_2 X_2, \sigma^2)$, the LRT statistic for testing $H_1^0: \beta_1=0, \beta_2\in\mathbb{R}$ 
against $H_1^1: \beta_1,\beta_2\in\mathbb{R}$ is not necessarily larger (or smaller) than  
that for testing $H_0^0: \beta_1=\beta_2=0$ against $H_0^1: \beta_1\in\mathbb{R}, \beta_2=0$;
see Figure \ref{fig:contour} for a schematic illustration.

\begin{figure}[t!]
\begin{center}
\includegraphics[width=80mm]{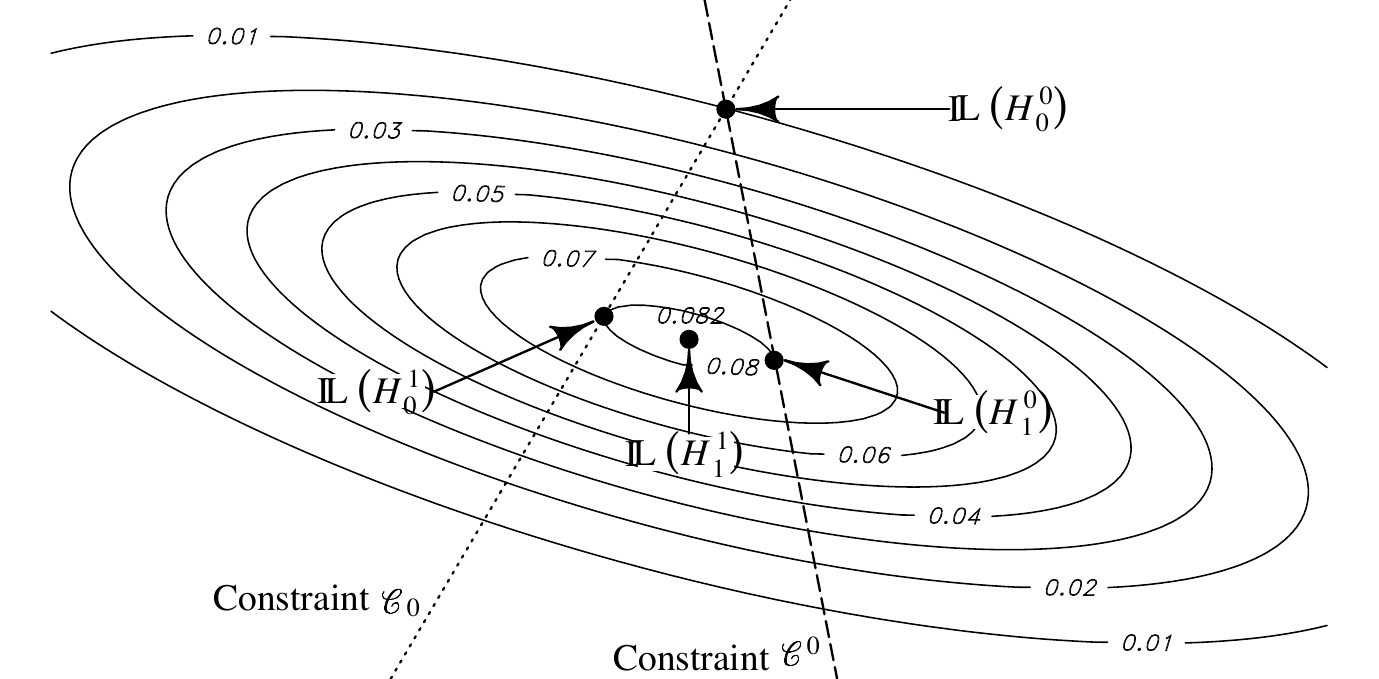} 
\vspace{-0.3cm}
\caption{\small 
A schematic illustration of the sign of (\ref{eqt:diff_of_d_function_of_4_liklihood}). 
The contour lines of $\overline{L}(\psi^{(1)}, \ldots, \psi^{(m)})$ are plotted.
The two straight lines refer to constraints $\mathcal{C}_0$ and $\mathcal{C}^0$.
Since
$\mathbb{L}(H_1^{1}) = 0.082$, $\mathbb{L}(H_0^{1})=\mathbb{L}(H_1^{0}) = 0.08$, 
and $\mathbb{L}(H_0^{0}) = 0.01$, 
we have $\left\{\mathbb{L}(H_1^{1})- \mathbb{L}(H_1^{0})\right\}- \left\{\mathbb{L}(H_0^{1})-\mathbb{L}(H_0^{0})\right\}= 0.002 - 0.007<0$.
Note that the function $\overline{L}(\psi^{(1)}, \ldots, \psi^{(m)})$ in (\ref{eqt:general_Lbar})
is at least $4$-dimensional (i.e., $\theta^{(1)}, \theta^{(2)}, \eta^{(1)}, \eta^{(2)}$) generally, 
so this illustration in a $2$-dimension space is just conceptual.
}
\label{fig:contour}
\end{center} 
\end{figure}

\begin{table*}[t!]
\begin{center}
\setlength{\tabcolsep}{2pt}
\renewcommand{\arraystretch}{0.5}
\caption{\small The definitions of hypotheses $H_0^{0}$, $H_0^{1}$, $H_1^{0}$, $H_1^{1}$.}
\begin{tabular}{c|c|c}\toprule
& $\begin{array}{c}	\mathcal{C}^0: \psi^{(1)} = \cdots= \psi^{(m)} \in\Psi 
						\\\text{(i.e., $\mathcal{r}=0$)}\end{array}$ & 
$\begin{array}{c}	\mathcal{C}^1: \psi^{(1)} , \ldots, \psi^{(m)} \in\Psi 
						\\\text{(i.e., $\mathcal{r}\geq 0$)}\end{array}$ \\
\cmidrule(r){1-3}
$\begin{array}{c}	\mathcal{C}_0: \theta^{(1)} = \cdots= \theta^{(m)} =\theta_0\in\Theta \\ 
						\text{(i.e., $H_0$-constrained)}\end{array}$ 
& $H_{0}^{0}= \mathcal{C}_{0} \cap \mathcal{C}^{0}$ 
& $H_{0}^{1}= \mathcal{C}_{0} \cap \mathcal{C}^{1}$  \\[0.5ex]
\cmidrule(r){1-3}
$\begin{array}{c} \mathcal{C}_1: \theta^{(1)} , \ldots , \theta^{(m)} \in\Theta\\ 
						\text{(i.e., not $H_0$-constrained)}\end{array}$	
& $H_{1}^{0}= \mathcal{C}_{1} \cap \mathcal{C}^{0}$  
& $H_{1}^{1}= \mathcal{C}_{1} \cap \mathcal{C}^{1}$  \\[0.5ex]
\bottomrule
\end{tabular}
\label{table:4hypotheses}
\end{center}
\end{table*}

\begin{figure}[t!]
\begin{center}
\tikzstyle{arrow} = [draw, thick, -latex']
\begin{tikzpicture}
	\node (P1) at (0, 0)   {$H_1^0$};    
	\node (P2) at (3, 0)   {$H_1^1$};    
	\node (P3) at (0, 1.5)   {$H_0^0$};    
	\node (P4) at (3, 1.5)   {$H_1^1$};    
	\path [arrow] (P1) [double] -- (P2);
	\path [arrow] (P3) [double] -- (P1);
	\path [arrow] (P3) [double] -- (P4);
	\path [arrow] (P4) [double] -- (P2);
	\path [arrow] (P3) [double] -- (P2);
\end{tikzpicture}
\end{center}
\vspace{-1cm}
\caption{\small The relationships between the four hypotheses 
$H_0^{0}$, $H_0^{1}$, $H_1^{0}$, $H_1^{1}$.
Each arrow denotes an implication, e.g., 
$H_0^{0} \Rightarrow H_0^{1}$ means that $H_0^{0}$ implies $H_0^{1}$.}\label{fig:4hypotheses}
\end{figure}

The decomposition (\ref{eqt:diff_of_d_function_of_4_liklihood}) 
provides another interpretation of $\widehat{r}_{\lrt}$.
The test statistic $\mathbb{L}(H_1^{1})-\mathbb{L}(H_1^{0})$ 
seeks evidence for detecting 
the falsity of $\mathcal{r}=0$ in both $\theta$ and $\eta$, whereas
$\mathbb{L}(H_0^{1})-\mathbb{L}(H_0^{0})$ seeks evidence 
only in $\eta$.
For cases where $\theta$ and $\eta$ are orthogonal (at least locally), 
the left-hand side of (\ref{eqt:diff_of_d_function_of_4_liklihood}) can be viewed as a measure of evidence against $\mathcal{r}=0$ 
solely from $\theta$;  Proposition~\ref{prop:condForPositiveR} already hinted this possibility. 
However, the ``test statistic'' (\ref{eqt:diff_of_d_function_of_4_liklihood}) 
has a fatal flaw. Because $\mathcal{C}_0$ requires all $\theta^{(\ell)}$s to coincide with a specific $\theta_0$,
$\mathcal{C}_0$ is not nested within $\mathcal{C}^0$, 
i.e., $\mathcal{C}^0 \nRightarrow \mathcal{C}_0$.
Hence $\widehat{r}_{\lrt}$ is guaranteed to consistently estimate $\mathcal{r}_m$ only under $H_0$.
This explains Corollary \ref{coro:finitenessOfr}, 
and leads to an improvement in Section \ref{sec:rob}.
In it not hard to see that our new estimator $\widehat{r}_{\lrt}^{\rob}$ simply 
drops the second term in (\ref{eqt:diff_of_d_function_of_4_liklihood}).

\subsection{Another Motivation for $\widehat{r}_{\lrt}^{\rob}$}\label{appendix:perb}
The definition of $\widehat{r}_{\lrt}^{\rob}$
can also be motivated by the following observation. 
First, observe that one simple method to construct an always non-negative estimator of $\mathcal{r}_m$
is to perturb $\widehat{\psi}_{0}^{*}$ and $\widehat{\psi}_{0}^{(\ell)}$ 
by a suitable amount, say $\Delta$, so that 
the perturbed version of $\widehat{r}_{\lrt}$ 
is always non-negative, and is still asymptotically equivalent to the original $\widehat{r}_{\lrt}$.
We show, in Theorem \ref{thm:AsyEq_r_LL_Delta} below, that the right amount of $\Delta$ is 
$\Delta = \widehat{\psi}^{*} - \widehat{\psi}_{0}^{*}$.
Using the perturbed version of $\widehat{r}_{\lrt}$, we obtain 
\begin{equation*}
	\widehat{r}_{\lrt}^{\pert}
        = \frac{m+1}{k(m-1)} \widehat{\delta}_{\lrt}^{\pert}, 
\end{equation*}
where
\begin{equation}\label{eqt:alternativeDeltad}
    \widehat{\delta}_{\lrt}^{\pert}
       = \frac{2}{m}\sum_{\ell=1}^m 
        \log\left\{\frac{f(X^{(\ell)}\mid\widehat{\psi}^{(\ell)})}{f(X^{(\ell)}\mid\widehat{\psi}^{*})}
        \frac{f(X^{(\ell)}\mid\widehat{\psi}_{0}^{*}+\Delta)}{f(X^{(\ell)}\mid\widehat{\psi}_{0}^{(\ell)}+\Delta)}
        \right\} \nonumber
        = \frac{1}{m} \sum_{\ell=1}^m d_{\lrt}(\widehat{\psi}^{(\ell)}_0+ \Delta, \widehat{\psi}^{(\ell)} \mid X^{(\ell)}) .
\end{equation}
Then we have the following result.

\begin{theorem}\label{thm:AsyEq_r_LL_Delta}
Suppose $\RC_\theta$.
Under $H_0$, we have 
(i) $\widehat{r}_{\lrt}^{\pert} \geq 0$ for all $m,n$; and 
(ii) $\widehat{r}_{\lrt}^{\pert} \bumpeq \widehat{r}_{\lrt}$
as $n\rightarrow\infty$ for each $m$.
\end{theorem}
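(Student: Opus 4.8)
The plan is to handle the two claims separately: claim (i) follows directly from the optimality of the MLE, whereas claim (ii) is a Taylor-expansion argument whose punchline is a cancellation supplied by Theorem~\ref{thm:AsyEq_dBar_LL}. Throughout write $\loglik^{\ell}(\psi)=\log f(X^{\ell}\mid\psi)$. For (i), observe that $\widehat{\delta}_{\lrt}^{\pert}=\frac{2}{m}\sum_{\ell=1}^{m}\{\loglik^{\ell}(\widehat{\psi}^{\ell})-\loglik^{\ell}(\widehat{\psi}_{0}^{\ell}+\Delta)\}$; since $\widehat{\psi}^{\ell}$ maximizes $\loglik^{\ell}$ over $\Psi$ and $\widehat{\psi}_{0}^{\ell}+\Delta\in\Psi$ whenever the statistic is well-defined (it lies within $O_p(n^{-1/2})$ of $\widehat{\psi}^{*}\in\Psi$ under $H_0$), every summand is nonnegative, so $\widehat{\delta}_{\lrt}^{\pert}\ge0$ and hence $\widehat{r}_{\lrt}^{\pert}=\{(m+1)/(k(m-1))\}\widehat{\delta}_{\lrt}^{\pert}\ge0$ for all $m,n$; this step uses nothing about the particular value of $\Delta$.

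For (ii) I would first do the bookkeeping. Writing $\widehat{d}_{\lrt}$, $\overline{d}_{\lrt}$ and $\widehat{\delta}_{\lrt}^{\pert}$ in terms of the $\loglik^{\ell}$ and using the identity $\widehat{\psi}^{*}=\widehat{\psi}_{0}^{*}+\Delta$, one gets
\[
	\widehat{\delta}_{\lrt}^{\pert}-\bigl(\overline{d}_{\lrt}-\widehat{d}_{\lrt}\bigr)
		=\frac{2}{m}\sum_{\ell=1}^{m}\bigl\{h^{\ell}(\widehat{\psi}_{0}^{*})-h^{\ell}(\widehat{\psi}_{0}^{\ell})\bigr\},
	\qquad
	h^{\ell}(\psi):=\loglik^{\ell}(\psi+\Delta)-\loglik^{\ell}(\psi),
\]
so it suffices to show the right-hand side is $o_p(1)$ under $H_0$. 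The relevant rates are: under $\RC_\theta$ the complete-data MLEs are $\sqrt{n}$-consistent, so $\widehat{\psi}^{\ell}-\psi^{\star}$, $\widehat{\psi}_{0}^{\ell}-\psi^{\star}$ and $\overline{\psi}_{0}-\psi^{\star}$ are $O_p(n^{-1/2})$; Theorem~\ref{thm:AsyEq_dBar_LL} gives $\sqrt{n}(\widehat{\psi}_{0}^{*}-\overline{\psi}_{0})\inP\mathbf{0}$ and $\sqrt{n}(\widehat{\psi}^{*}-\overline{\psi})\inP\mathbf{0}$, whence $\Delta=O_p(n^{-1/2})$ and $\widehat{\psi}_{0}^{\ell}-\widehat{\psi}_{0}^{*}=O_p(n^{-1/2})$, all of these points lying in a shrinking neighborhood of $\psi^{\star}$ with probability approaching one.

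The core estimate is a Taylor expansion of $h^{\ell}$ about $\psi^{\star}$. Expanding $\nabla h^{\ell}(\psi)=\nabla\loglik^{\ell}(\psi+\Delta)-\nabla\loglik^{\ell}(\psi)$ in $\Delta$ and invoking the convergence of the observed information (Assumption~\ref{ass:likelihood}(c)) yields $\nabla h^{\ell}(\psi)=-n\,\underline{\mathcal{I}}(\psi^{\star})\Delta+o_p(n^{1/2})$ uniformly for $\psi$ near $\psi^{\star}$, the quadratic remainder contributing only $O_p(n|\Delta|^{2})=O_p(1)$. By the fundamental theorem of calculus along the segment from $\widehat{\psi}_{0}^{\ell}$ to $\widehat{\psi}_{0}^{*}$, $h^{\ell}(\widehat{\psi}_{0}^{*})-h^{\ell}(\widehat{\psi}_{0}^{\ell})=n\,\Delta^{\T}\underline{\mathcal{I}}(\psi^{\star})(\widehat{\psi}_{0}^{\ell}-\widehat{\psi}_{0}^{*})+o_p(1)$, the error being $o_p(n^{1/2})\cdot O_p(n^{-1/2})$. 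Summing over $\ell$ and using $\sum_{\ell}(\widehat{\psi}_{0}^{\ell}-\widehat{\psi}_{0}^{*})=m(\overline{\psi}_{0}-\widehat{\psi}_{0}^{*})=o_p(n^{-1/2})$ produces the decisive cancellation: the leading term equals $n\cdot O_p(n^{-1/2})\cdot o_p(n^{-1/2})=o_p(1)$. Hence $\widehat{\delta}_{\lrt}^{\pert}-(\overline{d}_{\lrt}-\widehat{d}_{\lrt})=o_p(1)$ and therefore $\widehat{r}_{\lrt}^{\pert}-\widehat{r}_{\lrt}=o_p(1)$; since under $H_0$ both statistics share the same non-degenerate $O_p(1)$ limiting law (cf.\ Lemma~\ref{thm:distHatrPos} and Corollary~\ref{coro:finitenessOfr}), this upgrades to $\widehat{r}_{\lrt}^{\pert}\bumpeq\widehat{r}_{\lrt}$.

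I expect the main obstacle to be exactly this last cancellation: term by term, $h^{\ell}(\widehat{\psi}_{0}^{*})-h^{\ell}(\widehat{\psi}_{0}^{\ell})$ is only $O_p(1)$, so a crude bound does not suffice; what rescues the argument is that the perturbation $\Delta$ is common to all $\ell$ while the deviations $\widehat{\psi}_{0}^{\ell}-\widehat{\psi}_{0}^{*}$ average out at rate $o_p(n^{-1/2})$ because $\widehat{\psi}_{0}^{*}$ is, up to that order, the average $\overline{\psi}_{0}$ (Theorem~\ref{thm:AsyEq_dBar_LL}) --- which is precisely why $\Delta=\widehat{\psi}^{*}-\widehat{\psi}_{0}^{*}$ is ``the right amount.'' Two minor points remain to be pinned down: the domain claim used in (i), namely $\widehat{\psi}_{0}^{\ell}+\Delta\in\Psi$, and the degenerate case $\mathcal{r}=0$ when passing from $o_p(1)$ to $\bumpeq$.
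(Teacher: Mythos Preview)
Your argument is correct, but it takes a genuinely different route from the paper's. For (ii) the paper does not write out the difference $\widehat{\delta}_{\lrt}^{\pert}-(\overline{d}_{\lrt}-\widehat{d}_{\lrt})$ at all; instead it expands each $d_{\lrt}(\widehat{\psi}_{0}^{\ell}+\Delta,\widehat{\psi}^{\ell}\mid X^{\ell})$ quadratically around $\widehat{\psi}^{\ell}$ in the orthogonal parametrization ($\Cov(\widehat{\theta}^{\ell},\widehat{\eta}^{\ell})\bumpeq\mathbf{0}$), observes that the $\theta$-component of $\widehat{\psi}_{0}^{\ell}+\Delta-\widehat{\psi}^{\ell}$ is $\widehat{\theta}^{*}-\widehat{\theta}^{\ell}\bumpeq\overline{\theta}-\widehat{\theta}^{\ell}$ while the $\eta$-component is $O_p(1/n)$ (Cox--Reid), and thereby identifies $\widehat{\delta}_{\lrt}^{\pert}$ directly with the Wald-type quantity $\overline{d}_{\wt}^{\prime}-\widetilde{d}_{\wt}^{\prime}$, which is already known to be $\bumpeq\overline{d}_{\lrt}-\widehat{d}_{\lrt}$. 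Your approach avoids the orthogonal parametrization and the Wald detour entirely: you instead isolate the discrepancy as a second difference $\sum_{\ell}\{h^{\ell}(\widehat{\psi}_{0}^{*})-h^{\ell}(\widehat{\psi}_{0}^{\ell})\}$ and kill its leading term through the cancellation $\sum_{\ell}(\widehat{\psi}_{0}^{\ell}-\widehat{\psi}_{0}^{*})=m(\overline{\psi}_{0}-\widehat{\psi}_{0}^{*})=o_p(n^{-1/2})$. This is arguably cleaner, since it needs only the asymptotic equivalence of $\widehat{\psi}_{0}^{*}$ and $\overline{\psi}_{0}$ rather than the full block-diagonal bookkeeping; the paper's route, on the other hand, delivers the explicit limiting Wald form of $\widehat{\delta}_{\lrt}^{\pert}$, which is informative in its own right.

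Two small notes. First, the fact $\sqrt{n}(\widehat{\psi}_{0}^{*}-\overline{\psi}_{0})\inP\mathbf{0}$ is asserted in the text of \S\ref{sec:correctInv} (with a citation for the unconstrained version in the proof of Theorem~\ref{thm:AsyEq_dBar_LL}), not proved in Theorem~\ref{thm:AsyEq_dBar_LL} itself, so your citation should point there. Second, the domain caveat you raise in (i) is real but the paper glosses over it too; the claim ``for all $m,n$'' is best read as ``whenever the statistic is well-defined,'' in which case nonnegativity is immediate from $\widehat{\psi}^{\ell}$ being the global maximizer of $\loglik^{\ell}$.
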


Although $\widehat{r}_{\lrt}^{\pert}\geq 0$,
it is only invariant to affine transformations, 
and not robust against $\theta_0$, 
and less computational feasible than $\widehat{r}_{\lrt}$; 
see Section~\ref{sec:comp}.
However, it gives us some insights on how to construct a potentially better estimator. 
Note that, in (\ref{eqt:alternativeDeltad}), 
the constrained MLE is not used in $d_{\lrt}(\cdot,\cdot\mid X^{(\ell)})$, 
but it is still always non-negative. 
We call this a ``pseudo'' LRT statistics.
Then, $\widehat{\delta}_{\lrt}^{\pert}$ 
is just a multiple of an average of many ``pseudo'' LRT statistics.
In order to find a good estimator of $\mathcal{r}_m$, 
we may seek for an estimator 
which admits this form.
Indeed, our estimator $\widehat{r}^{\rob}_{\lrt}$ 
also takes the same form:
\[
	\widehat{r}_{\lrt}^{\rob}
		= \frac{m+1}{h (m-1)} \frac{1}{m} \sum_{\ell=1}^m d_{\lrt}(\widehat{\psi}^{*} , \widehat{\psi}^{(\ell)} \mid X^{(\ell)}) .
\]

\subsection{Additional result for Section~\ref{sec:refNullDist}}
This section presents the additional simulation result 
for Section~\ref{sec:refNullDist}.
The performance of different approximations to the reference null distribution
when $\alpha=5\%$ is shown in Figure \ref{fig:nullDist5pc}.

\begin{figure}[h!]
\begin{center}
\includegraphics[width=.95\textwidth]{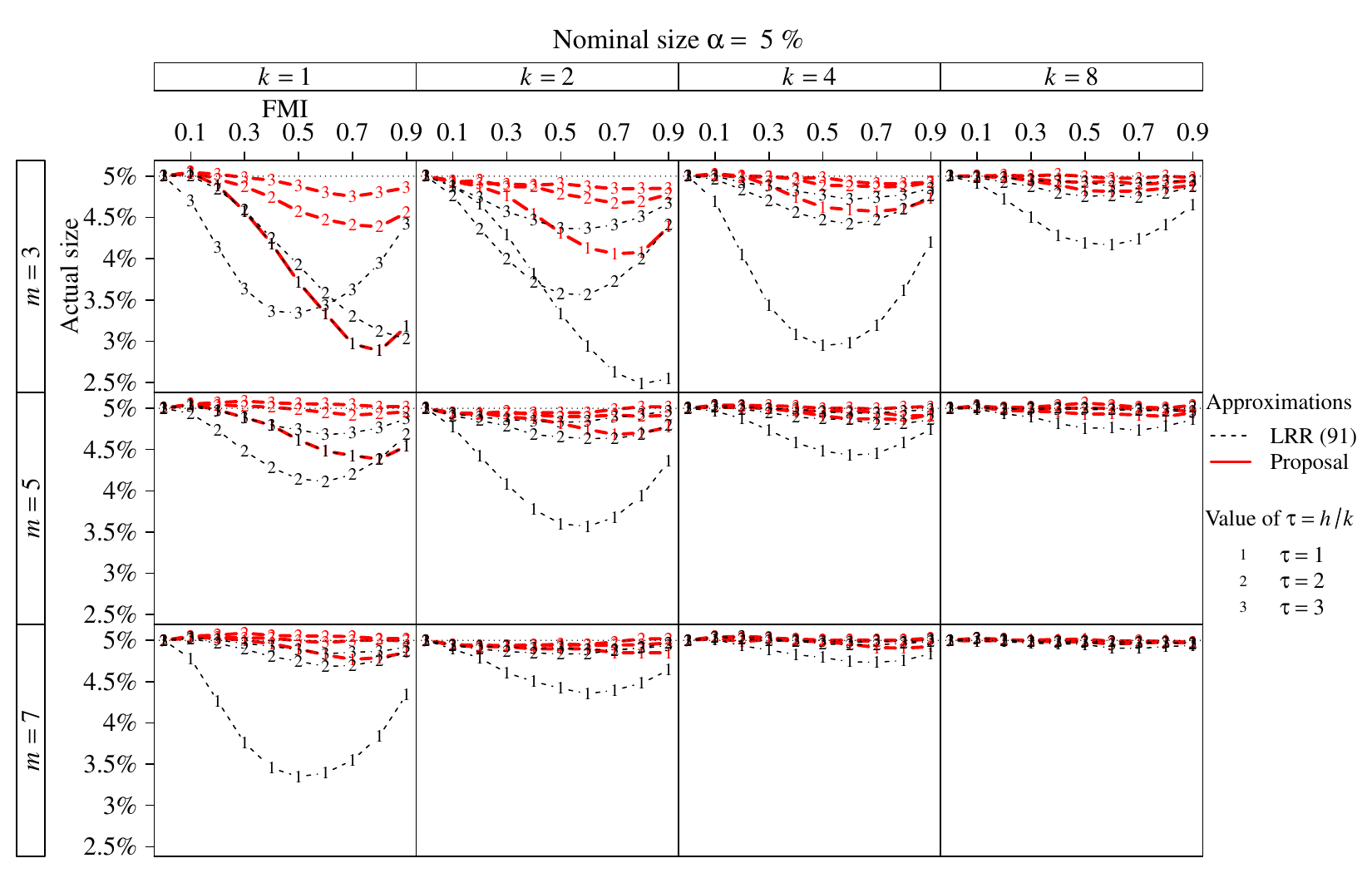} 
\vspace{-0.5cm}
\caption{\small 
The performance of two approximate null distributions
when the nominal size is $\alpha=5\%$. 
The vertical axis denotes $\widehat{\alpha}$ or $\widetilde{\alpha}$, 
and the horizontal axis denotes the value of $\mathcal{f}_m$.
The number attached to each line denotes the value of $\tau = h/k$.
The proposed approximation $\widehat{\alpha}$ is denoted by thick solid lines with triangles, and the existing approximation $\widetilde{\alpha}$ is denoted by thin dashed lines with circles. 
}
\label{fig:nullDist5pc}
\end{center} 
\end{figure}

\subsection{Results for Dependent Data}\label{appendix-timeSeries}
This is a supplement for Section~\ref{sec:compFeasibleComRule}.
If the data are not independent, then (\ref{eqt:prod_joint_density}) is no longer true.
In other words, $\overline{\loglik}(\psi) \not\equiv \overline{\loglik}^{\Stack}(\psi)$, 
where $\overline{\loglik}(\psi)=\sum_{\ell=1}^m \loglik^{(\ell)}(\psi)/m$ is defined in (\ref{eqt:jointDensity}), and 
\begin{equation}\label{eqt:prod_joint_density}
	\overline{\loglik}^{\Stack}(\psi)
		= \frac{1}{m} \log f_{mn}(X^{(1:m)}\mid \psi).
\end{equation}
In principle, $\overline{\loglik}(\psi)$ should be used instead of the ``stacked version'' $\overline{\loglik}^{\Stack}(\psi)$, 
however, the stacked one is much easier to compute. 
Because of this reason, it is of interest to see whether the stacked version can be used 
generally.

To begin with, we define the stacked version of all MI statistics when
$\overline{\loglik}^{\Stack}(\psi)$ is used instead of $\overline{\loglik}(\psi)$.
Let 
\begin{align}
	\widehat{\psi}_{0}^{\Stack}
		&= \argmax_{\psi\in\Psi\;:\; \theta(\psi)=\theta_0} \overline{\loglik}^{\Stack}(\psi),
	&
	\widehat{\psi}^{\Stack} 
		&= \argmax_{\psi\in\Psi} \overline{\loglik}^{\Stack}(\psi); &&\label{eqt:stackedMLE}  \\
	\widehat{\delta}_{0,\Stack} &= 2 \overline{\loglik}^{\Stack}(\widehat{\psi}_{0}^{\Stack}),
	   &
	\widehat{\delta}_{\Stack} &= 2 \overline{\loglik}^{\Stack}(\widehat{\psi}^{\Stack}).&& \label{eqt:def_deltaHat01S}
\end{align} 
and
\begin{align}
	\widehat{D}_{\Stack}(\mathcal{r}_m) 
		&= \frac{\widehat{d}_{\Stack}}{k(1+\mathcal{r}_m)}, 
		&& \text{with $\widehat{d}_{\Stack}=\widehat{\delta}_{\Stack}-\widehat{\delta}_{0,\Stack}$ 
			of (\ref{eqt:def_deltaHat01S})}; & \label{eqt:hatbarDm_harrcirc} \\
	\widehat{r}_{\Stack} 
		&= \frac{m+1}{k(m-1)}
			( \overline{d}_{\Stack} - \widehat{d}_{\Stack} ),
		&& \text{with $\overline{d}_{\Stack} = \overline{d}_{\lrt}$ 
			of (\ref{eqt:def_psiHat_bardd})}; & \label{eqt:def_hatrs} \\
	\widehat{r}_{\Stack}^{\rob}
		&= \frac{m+1}{h(m-1)} 
				( \overline{\delta}_{\Stack} - \widehat{\delta}_{\Stack}),  
		&&\text{with $\overline{\delta}_{\Stack}=\overline{\delta}_{\lrt}$ 
			of (\ref{eqt:def_deltabarhatL})}; & \label{eqt:def_rharrobCompeasy}
\end{align}
and $\widehat{r}^{+}_{\Stack} = \max(0,\widehat{r}_{\Stack})$. 
The stacked counterparts of $\widehat{D}^{\rob}_{\lrt}$ and 
its existing counterparts $\widehat{D}_{\lrt}$ and $\widehat{D}^{+}_{\lrt}$ 
(see (\ref{eqt:newstat})) then are given by 
\begin{equation}\label{eqt:newstat_compeasy}
	\widehat{D}_{\Stack}^{\rob}	= \widehat{D}_{\Stack}(\widehat{r}^{\rob}_{\Stack}), \qquad
	\widehat{D}_{\Stack} 			= \widehat{D}_{\Stack}(\widehat{r}_{\Stack}), \qquad
	\widehat{D}_{\Stack}^{+}	= \widehat{D}_{\Stack}(\widehat{r}^{+}_{\Stack}).
\end{equation}

The approximation 
$\widehat{d}_{\lrt} \bumpeq \widehat{d}_{\Stack}$ is still true
under the following conditions.

\begin{assumption}\label{ass:for_propApproxMLE}
\begin{enumerate}[noitemsep]
	\item[(a)] 
			Define $R(\psi) = \underline{\overline{\loglik}}^{\Stack}(\psi) - \underline{\overline{\loglik}}(\psi)$, where 
			$$\underline{\overline{\loglik}}(\psi) = (mn)^{-1} \sum_{\ell=1}^m \log f(X^{(\ell)}\mid \psi)
			\quad{\rm and}\quad \underline{\overline{\loglik}}^{\Stack}(\psi) = (mn)^{-1} \log f(X^{\Stack}\mid \psi).$$
			For each $m$, as $n\rightarrow\infty$, 
			\[
				\sup_{\psi\in\Psi} \left\vert R(\psi) \right\vert =O_p(1/n),
				\qquad 
				\sup_{\psi\in\Psi} \left\vert \frac{\partial}{\partial \psi} R(\psi) \right\vert =O_p(1/n).
			\]
	\item[(b)] 
			For each $m$, there exists a continuous function 
			$\psi\mapsto\overline{\underline{\mathcal{L}}}(\psi)$, which is free of $n$
			but may depend on $m$, 
			such that, as $n\rightarrow\infty$,
			\[
				\sup_{\psi\in\Psi} \left\vert \underline{\overline{\loglik}}(\psi) 
					- \overline{\underline{\mathcal{L}}}(\psi) \right\vert = o_p(1).
			\]
	\item[(c)] 
			Let $\psi_{0}^{*} = \argmax_{\psi\in\Psi\;:\;\psi(\theta)=\theta_0}\overline{\underline{\mathcal{L}}}(\psi)$
			and $\psi_{}^{*} = \argmax_{\psi\in\Psi}\overline{\underline{\mathcal{L}}}(\psi)$.
			For any fixed $m$, and for all $\varepsilon>0$, there exists $\delta>0$ such that 
			\begin{equation*}
				\sup_{\substack{\psi\in\Psi\,:\,|\psi_{0}^{*}-\psi|>\varepsilon\\ \theta(\psi)=\theta_0}} 
				\left\{ \overline{\underline{\mathcal{L}}}(\psi_{0}^{*})
					 - \overline{\underline{\mathcal{L}}}(\psi) \right\} \geq \delta, 
				\quad 
				\sup_{\psi\in\Psi\,:\,|\psi_{}^{*}-\psi|>\varepsilon} 
				\left\{ \overline{\underline{\mathcal{L}}}(\psi_{}^{*})
					 - \overline{\underline{\mathcal{L}}}(\psi) \right\} \geq \delta.
			\end{equation*}
\end{enumerate}
\end{assumption}

Conditions (b) and (c) in Assumption \ref{ass:for_propApproxMLE} 
are standard RCs that are usually assumed for M-estimators 
(see Section~5 of \cite{vanDerVaart2000}); 
whereas condition (a) is satisfied by many models 
(see Example \ref{eg:timeSeries} below).

\begin{theorem}\label{prop:approxMLE}
Suppose $\RC_\theta$ and Assumption \ref{ass:for_propApproxMLE}.
Under both $H_0$ and $H_1$, 
we have 
(i) $\widehat{d}_{\Stack},\widehat{r}_{\Stack}\geq 0$ for all $m,n$; 
(ii) $\widehat{d}_{\Stack},\widehat{r}_{\Stack}$ are 
invariant to the parametrization of $\psi$ for all $m,n$; and
(iii) $\widehat{d}_{\lrt} \bumpeq \widehat{d}_{\Stack}$
and $\widehat{r}_{\lrt} \bumpeq \widehat{r}_{\Stack}$
as $n\rightarrow\infty$ for each $m$.
\end{theorem}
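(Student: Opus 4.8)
The plan is to dispatch (i) and (ii) as finite-sample bookkeeping and to put the real work into (iii). For (i), $\widehat{d}_{\Stack}=2\{\overline{\loglik}^{\Stack}(\widehat{\psi}^{\Stack})-\overline{\loglik}^{\Stack}(\widehat{\psi}_0^{\Stack})\}\ge 0$ because $\widehat{\psi}^{\Stack}$ maximizes $\overline{\loglik}^{\Stack}$ over all of $\Psi$ whereas $\widehat{\psi}_0^{\Stack}$ maximizes it only over the sub-manifold $\{\psi:\theta(\psi)=\theta_0\}$ --- the usual nesting argument for LRT positivity, applied to $X^{\Stack}$ viewed as a single dataset of size $mn$; and $\widehat{r}_{\Stack}\ge 0$ follows from the same cancellation that proves Proposition~\ref{prop:condForPositiveR}, now with $\overline{\loglik}^{\Stack}$ in place of $\overline{\loglik}$. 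For (ii), under a bijection $\varphi=g(\psi)$ the value $\log f(X\mid\psi)$ is unchanged at corresponding parameter points and all (constrained and unconstrained) maximizers transform equivariantly, while each $d_{\lrt}^{\ell}$ entering $\overline{d}_{\lrt}$ is already invariant (\S~\ref{subsec:defect}); hence $\widehat{d}_{\Stack}$, $\overline{d}_{\lrt}$, and therefore $\widehat{r}_{\Stack}$ are unchanged.

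For (iii) I would work with the normalized objectives $\underline{\overline{\loglik}}(\psi)=\overline{\loglik}(\psi)/n$ and $\underline{\overline{\loglik}}^{\Stack}(\psi)=\overline{\loglik}^{\Stack}(\psi)/n$, which differ by $R(\psi)$. Assumption~\ref{ass:for_propApproxMLE}(b),(c) are the standard regularity conditions for M-estimators, and together with part~(a) --- which forces $\underline{\overline{\loglik}}^{\Stack}$ to share the uniform probability limit $\overline{\underline{\mathcal{L}}}$ of $\underline{\overline{\loglik}}$ --- they give consistency of all four estimators $\widehat{\psi}^{*},\widehat{\psi}_0^{*},\widehat{\psi}^{\Stack},\widehat{\psi}_0^{\Stack}$ to the population maximizers $\psi^{*},\psi_0^{*}$. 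I would then sharpen this to a rate: since $\widehat{\psi}^{\Stack}$ solves $\partial\underline{\overline{\loglik}}/\partial\psi=-\partial R/\partial\psi=O_p(1/n)$ uniformly while $\widehat{\psi}^{*}$ solves $\partial\underline{\overline{\loglik}}/\partial\psi=0$, a first-order Taylor expansion about $\widehat{\psi}^{*}$ combined with the negative definiteness of the limiting Hessian yields $\widehat{\psi}^{\Stack}-\widehat{\psi}^{*}=O_p(1/n)$, and the same argument on the constraint manifold (or via Lagrange multipliers) gives $\widehat{\psi}_0^{\Stack}-\widehat{\psi}_0^{*}=O_p(1/n)$.

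The decisive step is to expand $\widehat{d}_{\Stack}/2=\{\overline{\loglik}(\widehat{\psi}^{\Stack})+nR(\widehat{\psi}^{\Stack})\}-\{\overline{\loglik}(\widehat{\psi}_0^{\Stack})+nR(\widehat{\psi}_0^{\Stack})\}$. Since $\widehat{\psi}^{*}$ maximizes $\overline{\loglik}$ the linear term drops and $\overline{\loglik}(\widehat{\psi}^{\Stack})-\overline{\loglik}(\widehat{\psi}^{*})=O_p(n\,|\widehat{\psi}^{\Stack}-\widehat{\psi}^{*}|^2)=O_p(1/n)$ (likewise for the constrained pair), so $\widehat{d}_{\Stack}/2=\widehat{d}_{\lrt}/2+n\{R(\widehat{\psi}^{\Stack})-R(\widehat{\psi}_0^{\Stack})\}+o_p(1)$. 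Under $H_0$ the two limits coincide, $|\widehat{\psi}^{\Stack}-\widehat{\psi}_0^{\Stack}|=O_p(n^{-1/2})$, so with $\sup_\psi|\partial R/\partial\psi|=O_p(1/n)$ the middle term is $O_p(n^{-1/2})=o_p(1)$ and $\widehat{d}_{\Stack}-\widehat{d}_{\lrt}=o_p(1)$; since $\widehat{d}_{\lrt}$ has a nondegenerate, almost surely positive null limit (combine the two displays of Lemma~\ref{thm:distHatrPos}), a routine truncation argument upgrades this to $\widehat{d}_{\Stack}/\widehat{d}_{\lrt}\inP 1$, i.e.\ $\widehat{d}_{\Stack}\bumpeq\widehat{d}_{\lrt}$. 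Under $H_1$ one only gets $n\{R(\widehat{\psi}^{\Stack})-R(\widehat{\psi}_0^{\Stack})\}=O_p(1)$ from $\sup_\psi|R(\psi)|=O_p(1/n)$, but $\widehat{d}_{\lrt}=\Theta_p(n)\to\infty$, so again $\widehat{d}_{\Stack}\bumpeq\widehat{d}_{\lrt}$. Finally $\widehat{r}_{\lrt}-\widehat{r}_{\Stack}=\tfrac{m+1}{k(m-1)}(\widehat{d}_{\Stack}-\widehat{d}_{\lrt})=o_p(1)$, which together with the common probability limit from Corollary~\ref{coro:finitenessOfr} gives $\widehat{r}_{\lrt}\bumpeq\widehat{r}_{\Stack}$.

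The main obstacle is controlling the $nR$ terms: individually $nR(\widehat{\psi}^{\Stack})=O_p(1)$ is not negligible against the $O_p(1)$ null value of $\widehat{d}_{\lrt}$, so the argument must exploit that $\widehat{d}_{\Stack}$ is a \emph{difference} of log-likelihoods at two nearby maximizers, where the two $nR$ contributions nearly cancel. This is exactly why the argument does not extend to $\widehat{\delta}_{\Stack}$, and hence not to $\widehat{r}_{\Stack}^{\rob}$: there $\widehat{\delta}_{\Stack}-\widehat{\delta}_{\lrt}=2nR(\widehat{\psi}^{\Stack})+o_p(1)=O_p(1)$ retains an uncancelled $nR$ term, so $\widehat{r}_{\Stack}^{\rob}$ and $\widehat{r}_{\lrt}^{\rob}$ are \emph{not} asymptotically equivalent under dependence --- the subtle difference flagged before the theorem. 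Everything else --- consistency from Assumption~\ref{ass:for_propApproxMLE}(b),(c) and the $O_p(1/n)$ rate on the constraint manifold --- is routine M-estimation.
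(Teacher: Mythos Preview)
Your treatment of (iii) follows essentially the same route as the paper's: normalize, use the score perturbation $\nabla\underline{\overline{\loglik}}(\widehat{\psi}^{\Stack})=-\nabla R(\widehat{\psi}^{\Stack})=O_p(1/n)$ to bring $\widehat{\psi}^{\Stack}$ close to $\widehat{\psi}^{*}$, expand, and control the residual $n\{R(\widehat{\psi}^{\Stack})-R(\widehat{\psi}_0^{\Stack})\}$ differently under $H_0$ (the two arguments are $O_p(n^{-1/2})$ apart, so mean-value with $\sup_\psi|\nabla R|=O_p(1/n)$ gives $o_p(1)$) and under $H_1$ (only $O_p(1)$, but $\widehat{d}_{\lrt}\to\infty$). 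Your explicit rate $\widehat{\psi}^{\Stack}-\widehat{\psi}^{*}=O_p(1/n)$ is a touch more than the paper writes down, but the underlying mechanism is the same. Your closing observation on why the cancellation fails for $\widehat{r}_{\Stack}^{\rob}$ also matches the paper's note after the proof.

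There is, however, a genuine gap in your argument for (i). You claim $\widehat{r}_{\Stack}\ge 0$ ``follows from the same cancellation that proves Proposition~\ref{prop:condForPositiveR}''. But that proposition relies on the separability condition $L(\psi\mid X)=L_\dag(\theta\mid X)+L_\ddag(\eta\mid X)$, which is \emph{not} among the hypotheses of the present theorem; without it the $\eta$-terms do not drop out and the argument collapses. Indeed the claim itself is suspect: under row-independence Proposition~\ref{prop:iid} gives $\widehat{r}_{\Stack}\equiv\widehat{r}_{\lrt}$, yet \S\ref{sec:AffineInvStat} explicitly says $\widehat{r}_{\lrt}\ge 0$ ``does not hold in general for a finite $m$''. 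What \emph{is} trivial is $\widehat{d}_{\Stack}\ge 0$ --- your nesting argument is correct there --- and the paper's own proof merely labels (i) ``trivial from the definitions'' without further detail. So neither you nor the paper has supplied a valid argument for $\widehat{r}_{\Stack}\ge 0$ under just $\RC_\theta$ and Assumption~\ref{ass:for_propApproxMLE}; this part of the statement may simply be in error.
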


Theorem \ref{prop:approxMLE} implies that 
the handy test statistics $\widehat{D}_{\Stack}$ and $\widehat{D}_{\Stack}^{+}$
approximate $\widehat{D}_{\lrt}$ and $\widehat{D}_{\lrt}^{+}$
for dependent data, provided that Assumption \ref{ass:for_propApproxMLE} holds.

\begin{example}\label{eg:timeSeries}
Consider a stationary autoregressive model of order one.
Suppose the complete data $X=(X_1,\ldots,X_n)^{\T}$
is generated as following: 
$X_1\sim\mathcal{N}(0,v^2)$ 
and $[ X_i | X_{i-1} ]\sim\mathcal{N}(\phi X_{i-1}, \sigma^2)$
for $i\geq 2$, 
where $v^2 = {\sigma^2(1+\phi)}/{(1-\phi)}$.
Then $\psi=(\phi,\sigma^2)^{\T}$, and 
\begin{eqnarray*}
	\underline{\overline{\loglik}}(\psi)
		&=&  -\frac{1}{2}\log(2\pi)
				- \frac{1}{2n}\log v^2
				- \frac{1}{mn}\sum_{\ell=1}^m \frac{X_1^{(\ell)}}{2v^2}
				- \frac{n-1}{2n}\log \sigma^2 \\
		&&		- \frac{1}{mn}\sum_{\ell=1}^m\sum_{i=2}^n \frac{(X_i^{(\ell)}-\phi X_{i-1}^{(\ell)})^2}{2\sigma^2} , \\
	\underline{\overline{\loglik}}^{\Stack}(\psi)	
		&=&  -\frac{1}{2}\log(2\pi)
			- \frac{1}{2mn} \log v^2
			- \frac{(X_1^{(1)})^2}{2mnv^2}
			- \frac{mn-1}{2mn}\log \sigma^2 \\ 
		&&	- 
			\frac{1}{mn}\sum_{\ell=1}^m\sum_{i=2}^n \frac{(X_i^{(\ell)}-\phi X_{i-1}^{(\ell)})^2}{2\sigma^2}
			- \frac{1}{mn}\sum_{\ell=2}^m \frac{(X_1^{(\ell)}-\phi X_{n}^{(\ell-1)})^2}{2\sigma^2}.
\end{eqnarray*}
Then, it is easy to see that 
condition (a) of Assumption \ref{ass:for_propApproxMLE} is satisfied.
\end{example}

\subsection{Other existing MI tests}\label{sec:otherMItests}

First, we list some existing estimators of $\mathcal{r}_m$.
Let $s_{\wt,a}^2$ be the sample variances of 
$\{ ( d^{(\ell)}_{\wt})^a \}_{\ell=1}^m$ for $a>0$.
\cite{rubin2004multiple} and \cite{li91} proposed 
\begin{eqnarray}
	\widetilde{r}_{\wt,1} 
		&=& \frac{(1+1/m)s_{\wt,1}^2}
			{2\overline{d}_{\wt} + \sqrt{ \max\left\{ 0, 4 \overline{d}^2_{\wt} - 2 k s_{\wt,1}^2\right\}}},\label{eqt:otherEst_of_r_1}\\
	\widetilde{r}_{\wt,1/2}  
		&=& (1+1/m) s_{\wt,1/2}^2 ,\label{eqt:otherEst_of_r_2}
\end{eqnarray}
respectively.
When $k$ is large and $m$ is small, 
using (\ref{eqt:otherEst_of_r_1}) or (\ref{eqt:otherEst_of_r_2}) may lead to power loss. 
A trivial modification of $\widetilde{r}_{\lrt}$ of (\ref{eqt:rL}), i.e., $\widetilde{r}^{+}_{\lrt} = \max(0,\widetilde{r}_{\lrt})$, is a better alternative.

Second, we list some alternative MI combining rules. 
Having the above estimators of $\mathcal{r}_m$, 
we can insert them into the following combining rules:
\begin{equation}\label{eqt:def_D_as_function_of_r}
	\widetilde{D}_{\wt}^{\prime}(\mathcal{r}_m) = \frac{\widetilde{d}_{\wt}^{\prime}}{k(1+\mathcal{r}_m)}, \;
	\widetilde{D}_{\lrt}(\mathcal{r}_m) = \frac{\widetilde{d}_{\lrt}}{k(1+\mathcal{r}_m)} , \;
	\widetilde{D}^{+}_{\lrt}(\mathcal{r}_m) = \left\{ \widetilde{D}_{\lrt}(\mathcal{r}_m)\right\}^+.
\end{equation}
Using (\ref{eqt:def_of_rm}) and (\ref{eqt:rL}), 
we can also define the following combining rules:
\begin{equation}\label{eqt:barD_combineRule}
	\overline{D}_{\wt}^{\prime}(\mathcal{r}_m) 
		= \frac{\overline{d}_{\wt}^{\prime}-\frac{k(m-1)}{m+1}\mathcal{r}_m}{k(1+\mathcal{r}_m)},
	\qquad 
	\overline{D}_{\lrt}(\mathcal{r}_m) 	
		= \frac{\overline{d}_{\lrt}-\frac{k(m-1)}{m+1}\mathcal{r}_m}{k(1+\mathcal{r}_m)};
\end{equation}
see, e.g., \cite{li91}.
The combining rule $\overline{D}_{\wt}^{\prime}(\mathcal{r}_m)$ is useful
when  
computing $\overline{d}_{\wt}^{\prime}$ and estimating $\mathcal{r}_m$ are simple,
but the resulting power may deteriorate.
If $\widetilde{r}_{\wt,1}$ or $\widetilde{r}_{\wt,1/2}$ is used
for estimating $\mathcal{r}_m$,
the null distribution of (\ref{eqt:def_D_as_function_of_r}) and (\ref{eqt:barD_combineRule})
can be approximated by $F_{k,\widetilde{\df}'(\mathcal{r}_m,k)}$,
where $\widetilde{\df}'(\mathcal{r}_m, k)=(m-1)(1+\mathcal{r}_m^{-1})^2k^{-3/m}$; see \cite{li91}.

Next, we introduce and recall some notation:  
(a) standard complete-data moments estimation ($\mathcal{M}_{\wt}$, $\mathcal{M}_{\lrt}$) and testing procedures ($\mathcal{D}_{\wt}$, $\mathcal{D}_{\lrt}$), and 
(b) non-standard complete-data procedures ($\widetilde{\mathcal{D}}_{\lrt}$, $\overline{\mathcal{D}}_{\lrt}$, $\mathcal{D}_{\lrt,1}$, $\overline{\mathcal{D}}_{\lrt,1}$), where 
\begin{gather*}		
	\mathcal{M}_{\wt}(X) = \left\{ \widehat{\theta}(X), U(X) \right\}, \quad
	\mathcal{M}_{\lrt}(X) = \left\{ \widehat{\psi}(X) , \widehat{\psi}_0(X) \right\} ,  \\
	\mathcal{D}_{\wt}(X) = d_{\wt}(\widehat{\theta}(X),U(X)) , \quad
	\overline{\mathcal{D}}_{\lrt,1}(\mathbb{X}) 
		= \frac{2}{m} \sum_{\ell=1}^m \log f(X^{(\ell)}\mid\widehat\psi^*(\mathbb{X})), \\
	\overline{\loglik}(\psi)
		= \frac{1}{m} \log f_{mn}(X^{(1:m)}\mid \psi).\label{eqt:prod_joint_density}
\end{gather*}

Table \ref{table:prosCons_long} is the full version of Table \ref{table:prosCons} in the main text. 
It summarizes the statistical and computational properties of different MI tests; 
see Section \ref{sec:comparison} for details.

\begin{landscape} 
\begin{table}
\caption{\small Computational requirements and statistical properties of 
MI test statistics,
their associated combining rules and estimators of FMI $\mathcal{r}_m$.
The symbols ``$+$'' and ``$-$'' mean that the test statistic (or estimator)
is equipped and not equipped with the indicated property, respectively;
see the end of Section~\ref{sec:comparison} for heading descriptions.
The reference papers/book are abbreviated as follows: 
\cite{rubin2004multiple} (R04), \cite{li91} (LMRR91) and \cite{mengRubin92} (MR92).}
\begin{minipage}{20cm}
\setlength{\tabcolsep}{3.5pt}
\renewcommand{\arraystretch}{0.5}
\scriptsize
\begin{tabular}{cccc cccc c cccc ccHc} \toprule
\multicolumn{2}{c}{} & 
\multicolumn{2}{c}{Combining Rule} & 
\multicolumn{2}{c}{Estimator of $\mathcal{r}_m$} & 
\multicolumn{2}{c}{Approx. null distribution\footnote{In actual computation, the $\mathcal{r}_m$ in the denominator degree of freedom of $F$ is replaced by its corresponding estimator.}} 
&
\multicolumn{1}{c}{} & 
\multicolumn{8}{c}{Properties} \\
\cmidrule(r){3-4}\cmidrule(r){5-6}\cmidrule(r){7-8}
\cmidrule(r){10-17}
Test & No. & Formula & Routine & Formula & Routine & Original & Proposed & Reference 
& Inv & Con & $\geq 0$ & Pow & Def & Sca & Dep & EFMI \\
\cmidrule(r){1-17}
WT & WT-1 & 
 $D_{\wt}(T)$\footnote{Computing the test statistic $D_{\wt}(T) = d_{\wt}(\overline{\theta},T)/k$ does not require estimating $\mathcal{r}_m$.} & $\mathcal{M}_{\wt}$ & 
 $\widetilde{r}'_{\wt}$ & $\mathcal{D}_{\wt}$ &
 $F_{k, \widetilde{\df}(\mathcal{r}_m,k)}$ & $F_{k, \widehat{\df}(\mathcal{r}_m,k)}$ &
R04 & 
$-$ & $+$ & $+$ & $-$ & $-$ & $-$ & $+$ & $\theta$\footnote{EFMI is not required for the test statistic $D_{\wt}(T)$, but it is required for its approximate null distribution.} \\
 & WT-2 & 
$\widetilde{D}'_{\wt}(\mathcal{r}_m)$ & $\mathcal{M}_{\wt}$ & 
$\widetilde{r}'_{\wt}$ & $\mathcal{M}_{\wt}$ & 
$F_{k, \widetilde{\df}(\mathcal{r}_m,k)}$\footnote{The approximate null distribution documented in \cite{rubin2004multiple} was modified by \cite{li91}. This also applies to WT-2,4,5.} & $F_{k, \widehat{\df}(\mathcal{r}_m,k)}$ &
R04 & 
$-$ & $+$\footnote{The estimator $\widetilde{r}'_{\wt}$ does not depend on $\theta_0$, but its MSE may be inflated under $H_1$ if a bad parametrization of $\theta$ is used.} & $+$ & $-$ & $-$ & $-$ & $+$ & $\theta$ \\
 & WT-3 &  
 $\widetilde{D}'_{\wt}(\mathcal{r}_m)\footnote{The originally proposed combining rule is $\overline{D}'_{\wt}(\mathcal{r}_m)$; see (\ref{eqt:barD_combineRule}). Although $\overline{D}'_{\wt}(\mathcal{r}_m)$ is more computational feasible, the power loss is more significant than $\widetilde{D}'_{\wt}(\mathcal{r}_m)$ after inserting an inefficient estimator $\widetilde{r}'_{\wt,1}$ for $\mathcal{r}_m$. This footnote also applies to WT-3.}$ & $\mathcal{M}_{\wt}$  & 
 $\widetilde{r}'_{\wt,1}$ & $\mathcal{D}_{\wt}$ &
$F_{k, \widetilde{\df}'(\mathcal{r}_m,k)}$ & NA &
R04 & 
$-$ & $-$ & $+$ & $-$ & $-$ & $-$ & $+$ & $\theta$ \\  
 & WT-4 &  
 $\widetilde{D}'_{\wt}(\mathcal{r}_m)$ & $\mathcal{M}_{\wt}$  & 
 $\widetilde{r}'_{\wt,1/2}$ & $\mathcal{D}_{\wt}$ &
 $F_{k, \widetilde{\df}'(\mathcal{r}_m,k)}$ & NA &
LMRR91 & 
$-$ & $-$ & $+$ & $-$ & $-$ & $-$ & $+$ & $\theta$ \\
 & WT-5 &  
 $\overline{D}'_{\wt}(\mathcal{r}_m)$ & $\mathcal{D}_{\wt}$  & 
 $\widetilde{r}'_{\wt,1}$ & $\mathcal{D}_{\wt}$ &
 $F_{k, \widetilde{\df}'(\mathcal{r}_m,k)}$ & NA &
R04 & 
$-$ & $-$ & $-$ & $-$ & $-$ & $+$ & $+$ & $\theta$ \\
 & WT-6 &  
 $\overline{D}'_{\wt}(\mathcal{r}_m)$ & $\mathcal{D}_{\wt}$  & 
 $\widetilde{r}'_{\wt,1/2}$ & $\mathcal{D}_{\wt}$ &
 $F_{k, \widetilde{\df}'(\mathcal{r}_m,k)}$ & NA &
LMRR91 & 
$-$ & $-$ & $-$ & $-$ & $-$ & $+$ & $+$ & $\theta$ \\
\cmidrule(r){1-17}
LRT & LRT-1 & 
$\widetilde{D}_{\lrt}(\mathcal{r}_m)$ & $\mathcal{M}_{\lrt},\widetilde{\mathcal{D}}_{\lrt}$ & 
$\widetilde{r}_{\lrt}$ & $\mathcal{M}_{\lrt},\widetilde{\mathcal{D}}_{\lrt}$ &
$F_{k, \widetilde{\df}(\mathcal{r}_m,k)}$ & $F_{k, \widehat{\df}(\mathcal{r}_m,k)}$ &
MR92 & 
$-$ & $-$ & $-$ & $-$ & $+$ & $-$\footnote{Averaging and processing vector estimators of $\psi$, but not their covariance matrixes, is needed. This footnote also applies to LRT-2.} & $+$ & $\theta$ \\
 & LRT-2 & 
 $\widehat{D}_{\lrt}(\mathcal{r}_m)$ & $\mathcal{D}_{\lrt}$ & 
 $\widehat{r}^+_{\lrt}$ & $\mathcal{D}_{\lrt}$ &
 $F_{k, \widetilde{\df}(\mathcal{r}_m,k)}$ & $F_{k, \widehat{\df}(\mathcal{r}_m,k)}$ &
Proposal & 
$+$ & $-$ & $+$ & $-$ & $+$ & $+$ & $+$ & $\theta$ \\
 & LRT-3 & 
 $\widehat{D}_{\lrt}(\mathcal{r}_m)$ & $\mathcal{D}_{\lrt}$ & 
 $\widehat{r}^{\rob}_{\lrt}$ & $\mathcal{D}_{\lrt,1}$ &
 $F_{k, \widetilde{\df}(\mathcal{r}_m,h)}$ & $F_{k, \widehat{\df}(\mathcal{r}_m,h)}$ &
Proposal & 
$+$ & $+$ & $+$ & $+$ & $+$ & $+$ & $+$ & $\psi$ \\
 & LRT-4 & 
 $\widetilde{D}^+_{\lrt}(\mathcal{r}_m)$ & $\mathcal{M}_{\lrt},\widetilde{\mathcal{D}}_{\lrt}$ & 
 $\widetilde{r}^+_{\lrt}$ & $\mathcal{M}_{\lrt},\widetilde{\mathcal{D}}_{\lrt}$ &
 $F_{k, \widetilde{\df}(\mathcal{r}_m,k)}$ & $F_{k, \widehat{\df}(\mathcal{r}_m,k)}$ &
MR92\footnote{It is a trivial modification of the original proposal in MR92 by replacing $\widetilde{r}_{\rm L}$ with $\widetilde r_{\rm L}^{+}=\max\{0, \widetilde{r}_{\rm L}\}$.} & 
$-$ & $-$ & $+$ & $-$ & $+$ & $-$ & $+$ & $\theta$ \\
 & LRT-5 & 
 $\widehat{D}_{\lrt}(\mathcal{r}_m)$ & $\mathcal{D}_{\lrt}$ &
 $\widehat{r}_{\lrt}$ & $\mathcal{D}_{\lrt}$ &
 $F_{k, \widetilde{\df}(\mathcal{r}_m,k)}$ & $F_{k, \widehat{\df}(\mathcal{r}_m,k)}$ &
Proposal & 
$+$ & $-$ & $-$ & $-$ & $+$ & $+$ & $-$
& $\theta$ \\
\bottomrule
\end{tabular}
\label{table:prosCons_long}
\end{minipage}
\end{table}
\end{landscape}

\subsection{Supplement for Section~\ref{sec:eg_lrt_normal}}\label{sec:eg_lrt_normal_MI}
Let $\overline{X}_{\obs}$ and $S_{\obs}$ be the 
sample mean and sample covariance matrix based on ${X}_{\obs}$.
Then, the $\ell$th imputed missing data set 
can be produced by the following procedure, 
for $\ell=1,\ldots,m$.
\begin{enumerate}[noitemsep]
	\item Draw $(\Sigma^{(\ell)})^{-1}$ from 
			a Wishart distribution 
			with $(n_{\obs}-1)$ degrees of freedom and scale matrix $S_{\obs}$.
	\item Draw $\mu^{(\ell)}$ from  
			$\mathcal{N}_p(\overline{X}_{\obs}, \Sigma^{(\ell)}/n_{\obs})$.
	\item 
			Draw $(n-n_{\obs})$ imputed missing values
			$\{X^{(\ell)}_{i}:i=n_{\obs}+1,\ldots,n\}$ from 
			$\mathcal{N}_p(\mu^{(\ell)}, \Sigma^{(\ell)})$ independently.
\end{enumerate}
Also, denote $X^{(\ell)}_{i} = X_i$ for $i=1,\ldots,n_{\obs}$.
With the $\ell$th completed data set,
the unconstrained MLEs for $\mu$ and $\Sigma$ are 
\begin{equation*}
	\widehat{\mu}^{(\ell)} 
		= \frac{1}{n} \sum_{i=1}^n X_{i}^{(\ell)},\qquad 
	\widehat{\Sigma}^{(\ell)} 
		= \frac{1}{n} \sum_{i=1}^{n} \left(X_{i}^{(\ell)}-\widehat{\mu}^{(\ell)}\right)\left(X_{i}^{(\ell)}-\widehat{\mu}^{(\ell)}\right)^{\T}.
\end{equation*}
Whereas we generate data using a covariance matrix with common variance and correlation, 
our model does not assume any structure for $\Sigma$. The only restriction we can impose is the common-mean assumption under the null, 
for which the constrained MLEs are 
\begin{equation*}
	\widehat{\mu}^{(\ell)}_0
    	= \left\{
				\frac{\mathbf{1}_p^{\T}(\widehat{\Sigma}^{(\ell)})^{-1}\widehat{\mu}^{(\ell)}}
        				{\mathbf{1}_p^{\T}(\widehat{\Sigma}^{(\ell)})^{-1}\mathbf{1}_p} 
			\right\} \mathbf{1}_p ,\qquad 
	\widehat{\Sigma}^{(\ell)}_0
    	= \widehat{\Sigma}^{(\ell)} 
        	+ 	\left( \widehat{\mu}^{(\ell)} - \widehat{\mu}^{(\ell)}_0 \right)
				\left( \widehat{\mu}^{(\ell)} - \widehat{\mu}^{(\ell)}_0 \right)^{\T}.
\end{equation*}

\begin{table*}[t!]
\begin{center}
\renewcommand{\arraystretch}{0.5}
\caption{\small The values of parameters used in the simulation experiment in Section~\ref{sec:eg_lrt_normal}.}
\begin{tabular}{cc C{.9cm}C{.9cm}C{.9cm} ccccc}\toprule
\multicolumn{2}{c}{\bf Experiment}	& 
\multicolumn{3}{c}{\bf Fixed Parameters} 	& 
\multicolumn{5}{c}{\bf Variable Parameter}  \\
\cmidrule(r){1-2}\cmidrule(r){3-5}\cmidrule(r){6-10}
No. & Variable Parameter	& $\rho$ & $p$ & $\mathcal{f}$ & Case 1 & Case 2 & Case 3 & Case 4 & Case 5 	\\
\cmidrule(r){1-10}
	I & Correlation	$\rho$ 	&	-- 		& $2$ 	& $0.5$	& $-0.8$ & $-0.4$ & $0$ 	& $0.4$ 	& $0.8$ 	 \\
	II & Dimension $p$ 			&	$0.4$ 	& -- 	& $0.5$ 	& $2$ 	& $3$ 	& $4$		& $5$ 	& $6$		 \\
	III & FMI $\mathcal{f}$  	&	$0.4$ 	& $2$ 	& --	 	& $0.1$ 	& $0.3$ 	& $0.5$ 	& $0.7$ 	& $0.9$ 	 	\\
\bottomrule
\end{tabular}
\label{table:experimentParaVal}
\end{center}
\end{table*}
\noindent

We first study the distribution of 
$p$-values of each test under $H_0$.
We use $n=100$, $m=3$, $\sigma^2=5$ and $\mu=\mathbf{1}_p$, 
with various values of $\rho$, $p$ and $\mathcal{f}$ specified in Table \ref{table:experimentParaVal}.
The results under parametrizations (i), (ii) and (iii) are shown in Figures \ref{fig:pVal1}, 
\ref{fig:pVal2} and \ref{fig:pVal3}, respectively. 
Note that, for Wald tests under parametrization (ii), 
the matrix $U^{(\ell)}$ is singular in $0.25\%$ of the replications, and those cases are removed from the analysis (which should favor the Wald tests).

\begin{figure}[h!]
\begin{center}
\includegraphics[width=\textwidth]{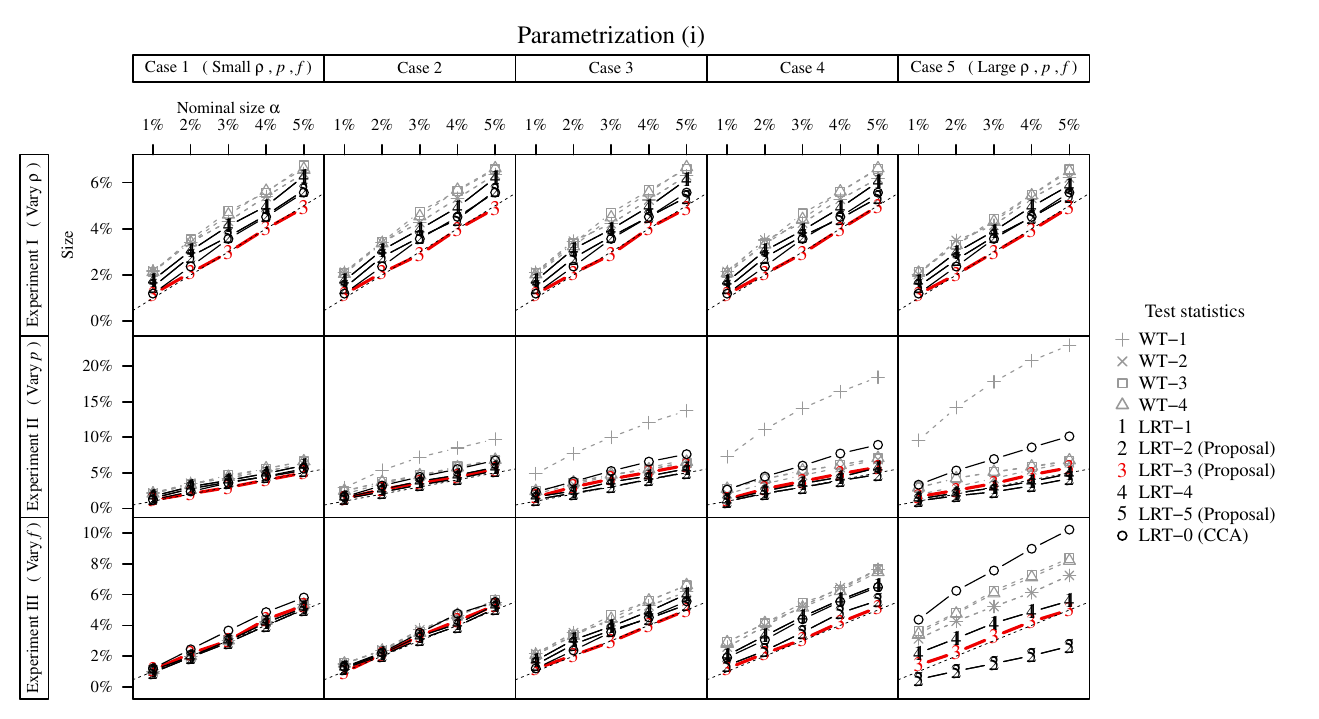}
\caption{\small 
The comparison between empirical size and nominal size $\alpha$
under parametrization (ii) for $\alpha\in (0,5\%]$.
Our most recommended proposal is LRT-3, which is highlighted red. 
}
\label{fig:pVal1}
\end{center} 
\end{figure}

\begin{figure}[h!]
\begin{center}
\includegraphics[width=\textwidth]{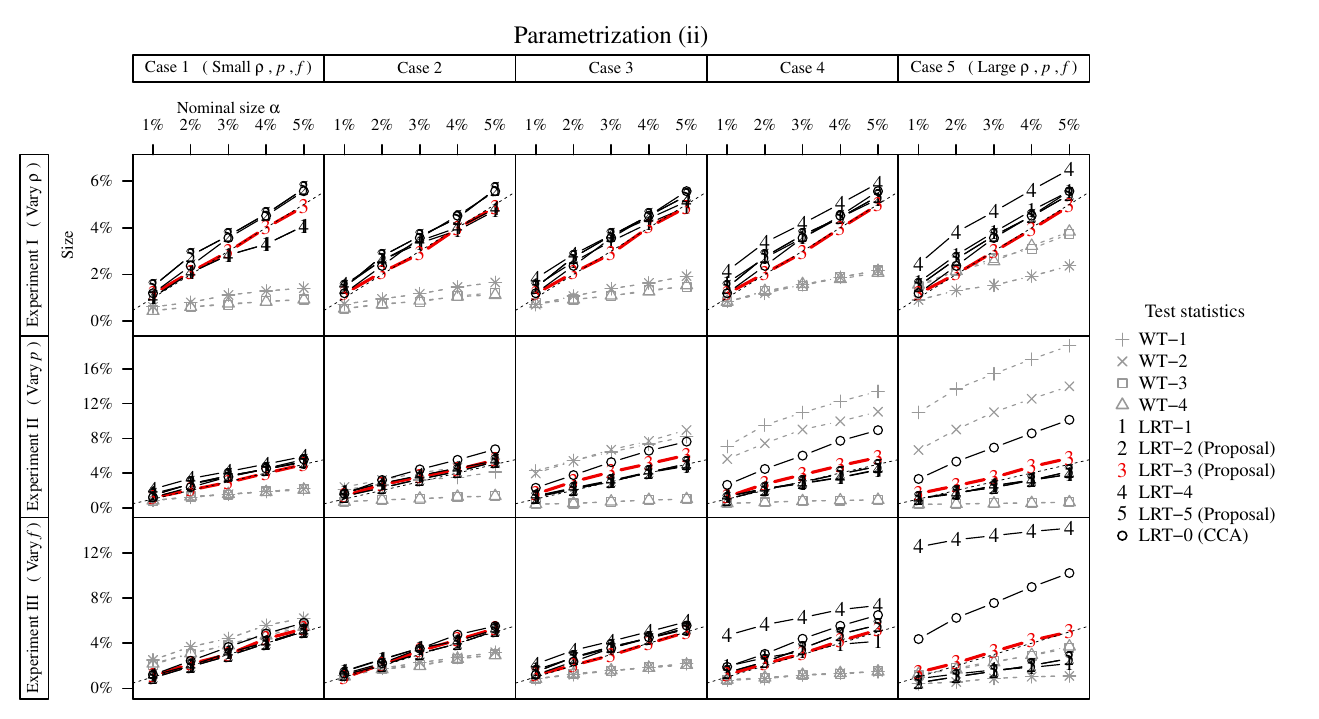}
\caption{\small 
The comparison between empirical size and nominal size $\alpha$
under parametrization (i) for $\alpha\in (0,5\%]$.
The legend in Figure \ref{fig:pVal1} also applies here. 
}
\label{fig:pVal2}
\end{center} 
\end{figure}

\begin{figure}[h!]
\begin{center}
\includegraphics[width=\textwidth]{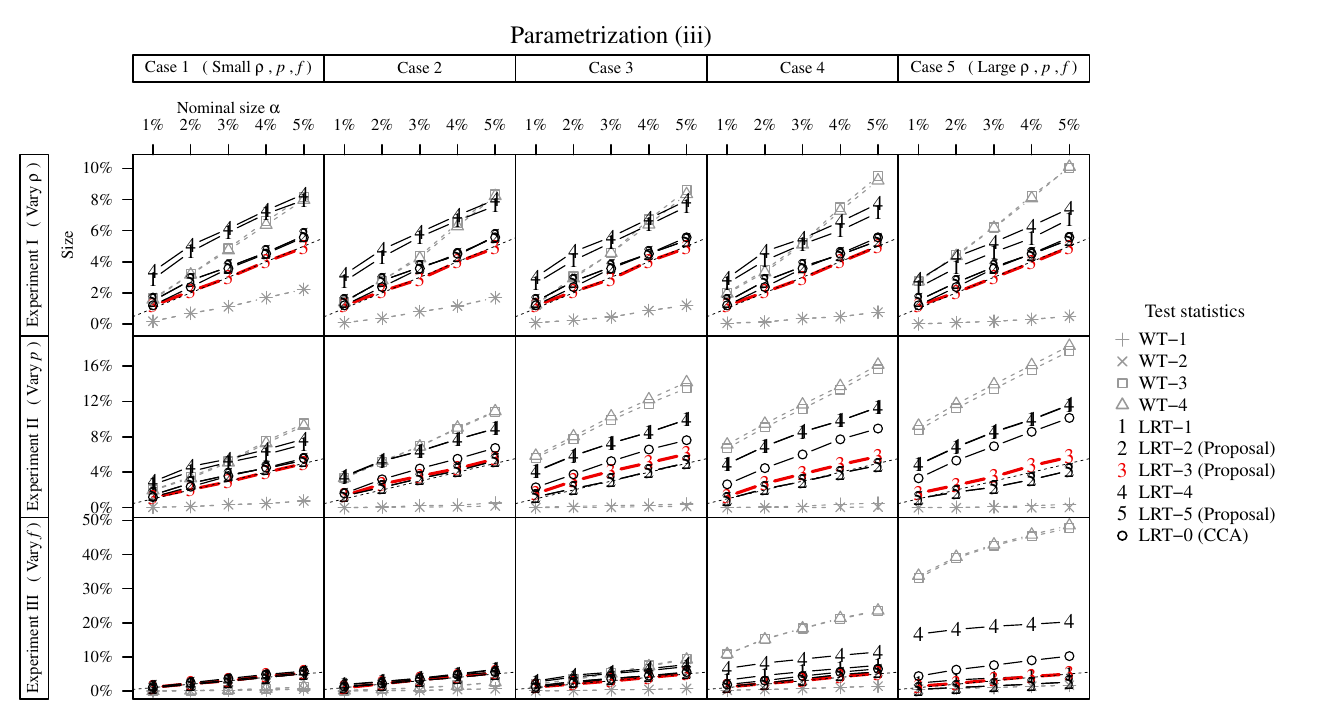}
\caption{\small 
The comparison between empirical size and nominal size $\alpha$
under parametrization (iii) for $\alpha\in (0,5\%]$.
The legend in Figure \ref{fig:pVal2} also applies here. 
}
\label{fig:pVal3}
\end{center} 
\end{figure}

The empirical sizes (i.e., type-I errors) of the MI Wald tests generally deviate from the nominal size $\alpha$ under parametrization (ii). In contrast, the sizes of all LRTs are closer to $\alpha$.
However, the original L-1 and its trivial modification L-2 
do not have accurate sizes when $|\rho|$ or $\mathcal{f}$ is large.
They can be over-sized or under-sized depending on which parametrization is used. 
Moreover, the trivial modification L-2 does not help to correct the size, 
and it may even worsen the test. 
For our test statistics 
L-3 and L-4, 
they are invariant to parametrizations
and have quite accurate sizes, although they are under-sized 
in challenging cases where both $p$ and $\mathcal{f}$ are large.
For our recommended statistic L-5, 
it gives the most satisfactory overall results. 
It generally has very accurate size, except that it is slightly over-sized for large $p$, a problem that should diminish when we use $m$ beyond the smallest recommended $m=3$. 

Interestingly, as seen clearly in Figure~\ref{fig:pVal2},  the benchmark L-0 performs very badly for large $p$ and $\mathcal{f}$. This is because the sample size per parameter, $n/h$, is small; for $p\ge 4$, $n/h\le 100/14<8$. The asymptotic null distribution $\chi^2_k/k$ then can fail badly under arbitrary or even all parametrizations; (ii) apparently falls into this category. An $F$ approximation would be more appropriate \citep[see][]{BarnardRubin1999}. But this is exactly what is being used for MI tests, albeit with different
choices of the denominator degrees of freedom. Note also that, in some cases, nearly half of the simulated values of 
$\widetilde{r}_{\lrt}$ and $\widetilde{D}_{\lrt}$ are negative;
see Table \ref{table:propNeg}. 
In contrast, $\widehat{r}_{\Stack}$
is always non-negative in our simulation,
despite the fact that it can be negative in theory.

\begin{table*}[t!]
\renewcommand{\arraystretch}{0.5}
\begin{center}
\caption{\small The empirical proportions 
of negative $\widetilde{r}_{\lrt}$ and $\widetilde{D}_{\lrt}$.
The results under parametrizations (ii) and (iii) are shown.
For parametrization (i), 
$\widetilde{r}_{\lrt} \geq 0$ and $\widetilde{D}_{\lrt}\geq 0$ in the experiments.}
\begin{tabular}{cccccccccccc}\toprule
&& \multicolumn{10}{c}{\bf Case}\\
\cmidrule(r){3-12}
&&1 & 2 & 3 & 4 & 5 & 1 & 2 & 3 & 4 & 5 \\
\cmidrule(r){3-7}\cmidrule(r){8-12}
{\bf Experiment} & {\bf Parametrization}  & \multicolumn{5}{c}{\% of $\widetilde{r}_{\lrt}< 0$} 
		& \multicolumn{5}{c}{\% of $\widetilde{D}_{\lrt}< 0$} \\[0.5ex]
\cmidrule(r){1-1}\cmidrule(r){2-2}\cmidrule(r){3-7}\cmidrule(r){8-12}
I&	(ii)	&	$ 1$	&	$ 2$	&	$ 3$	&	$ 4$	&	$ 5$	&	$26$	&	$16$	&	$13$	&	$12$	&	$12$	\\[0.5ex]
&	(iii)	&	$ 6$	&	$ 6$	&	$ 7$	&	$ 7$	&	$ 7$	&	$ 1$	&	$ 1$	&	$ 1$	&	$ 1$	&	$ 2$	\\[0.5ex]
\cmidrule(r){1-1}\cmidrule(r){2-2}\cmidrule(r){3-7}\cmidrule(r){8-12}
II	&(ii)	&	$ 4$	&	$ 1$	&	$ 0$	&	$ 0$	&	$ 0$	&	$12$	&	$ 5$	&	$ 3$	&	$ 4$	&	$ 3$	\\[0.5ex]
	&(iii)	&	$ 7$	&	$ 3$	&	$ 1$	&	$ 1$	&	$ 1$	&	$ 1$	&	$ 0$	&	$ 0$	&	$ 0$	&	$ 0$	\\[0.5ex]
\cmidrule(r){1-1}\cmidrule(r){2-2}\cmidrule(r){3-7}\cmidrule(r){8-12}
III	&(ii)	&	$13$	&	$ 6$	&	$ 4$	&	$ 4$	&	$ 3$	&	$55$	&	$25$	&	$12$	&	$ 5$	&	$ 2$	\\[0.5ex]
	&(iii)	&	$18$	&	$ 9$	&	$ 7$	&	$ 5$	&	$ 4$	&	$20$	&	$ 5$	&	$ 1$	&	$ 1$	&	$ 0$	\\[0.5ex]
\bottomrule
\end{tabular}
\label{table:propNeg}
\end{center}
\end{table*}

\begin{figure}[h!]
\begin{center}
\includegraphics[width=\textwidth]{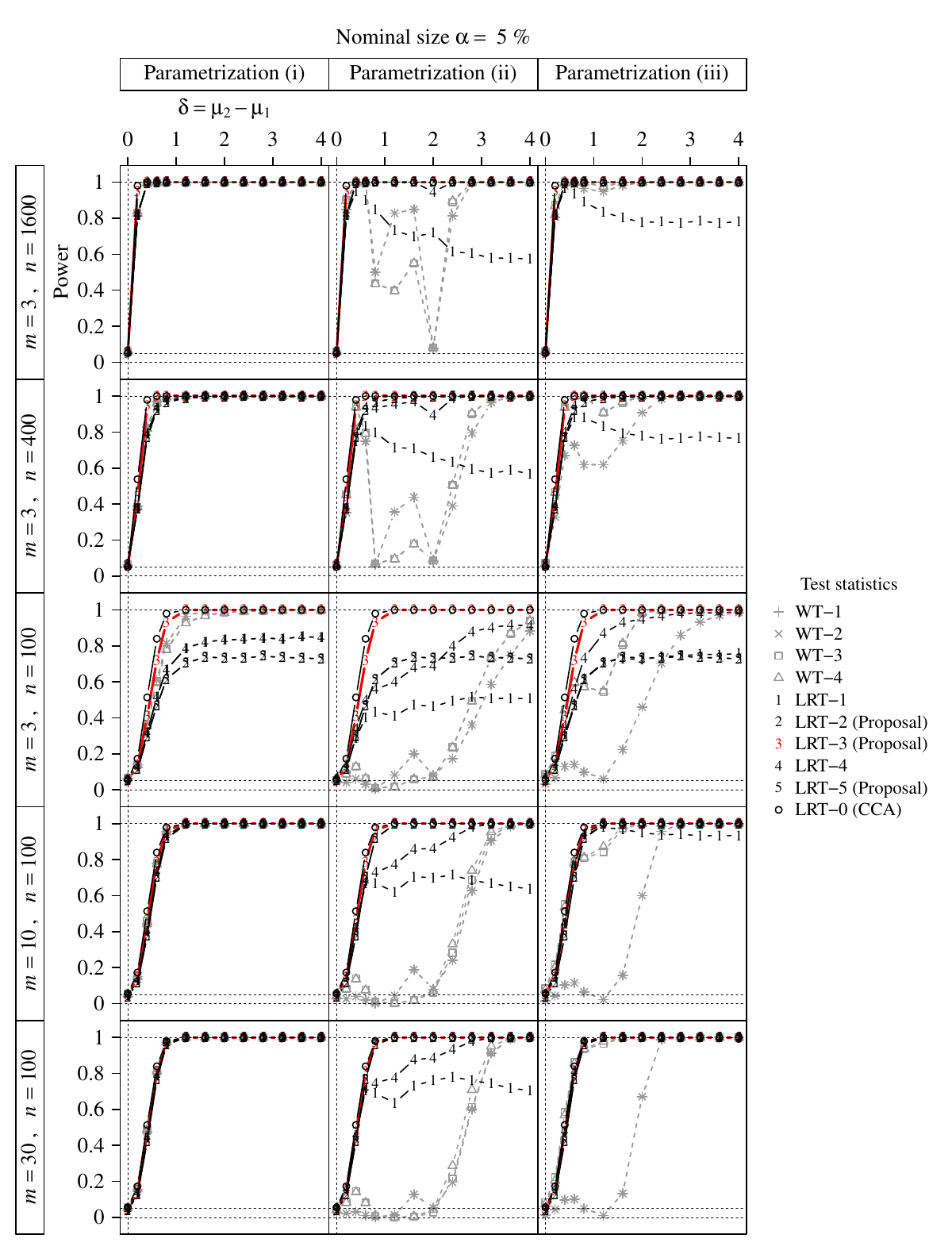}
\caption{\small 
The power curves under different parametrizations.
The nominal size is $\alpha=5\%$.
In each plot, the vertical axis denotes the power, 
whereas the horizontal axis denotes the value of $\delta=\mu_2-\mu_1$.
The legend in Figure \ref{fig:pVal2} also applies here. 
}
\label{fig:power_5pc}
\end{center} 
\end{figure}

The power curves under nominal size $0.5\%$ and $5\%$ are shown in Figure \ref{fig:power_05pc} of the 
main text
and Figure \ref{fig:power_5pc}, respectively. 
Note that 
the trivial modifications LRT-2 of LRT-1 cannot retrieve all the power it should have. 
Tables \ref{table:sizeError_05} and \ref{table:sizeError_5} show the minimum and maximum of the empirical sizes over the three parametrizations considered in each test --- and only one value is needed for those tests that are invariant to parametrization --- 
when the nominal size is $0.5\%$ and $5\%$, respectively.
We see the deviations from the nominal $\alpha$ can be noticeable, especially when $m=3$. To take that into account,  we report the empirical size adjusted power, that is, $O={\rm power}/\widehat{\alpha}$, which also has the interpretation as (an approximated) posterior odds of $H_1$ to $H_0$ \citep{BBBS2015}. 
Figures \ref{fig:powerOverSize_05pc} and \ref{fig:powerOverSize_5pc} plot 
the result for nominal size $0.5\%$ and $5\%$, respectively. 
Compared with the benchmark L-0,
the odds $O$ of the proposed robust MI test (L-5) 
is closer to the nominal value $1/\alpha$ as $\delta\rightarrow \infty$. 
Nevertheless, the performances of all size $0.5\%$ tests are less
satisfactory than those for size $5\%$ tests because 
larger sample sizes $n$ are required to approximate the tail behavior well.

\begin{figure}[t!]
\begin{center}
\includegraphics[width=\textwidth]{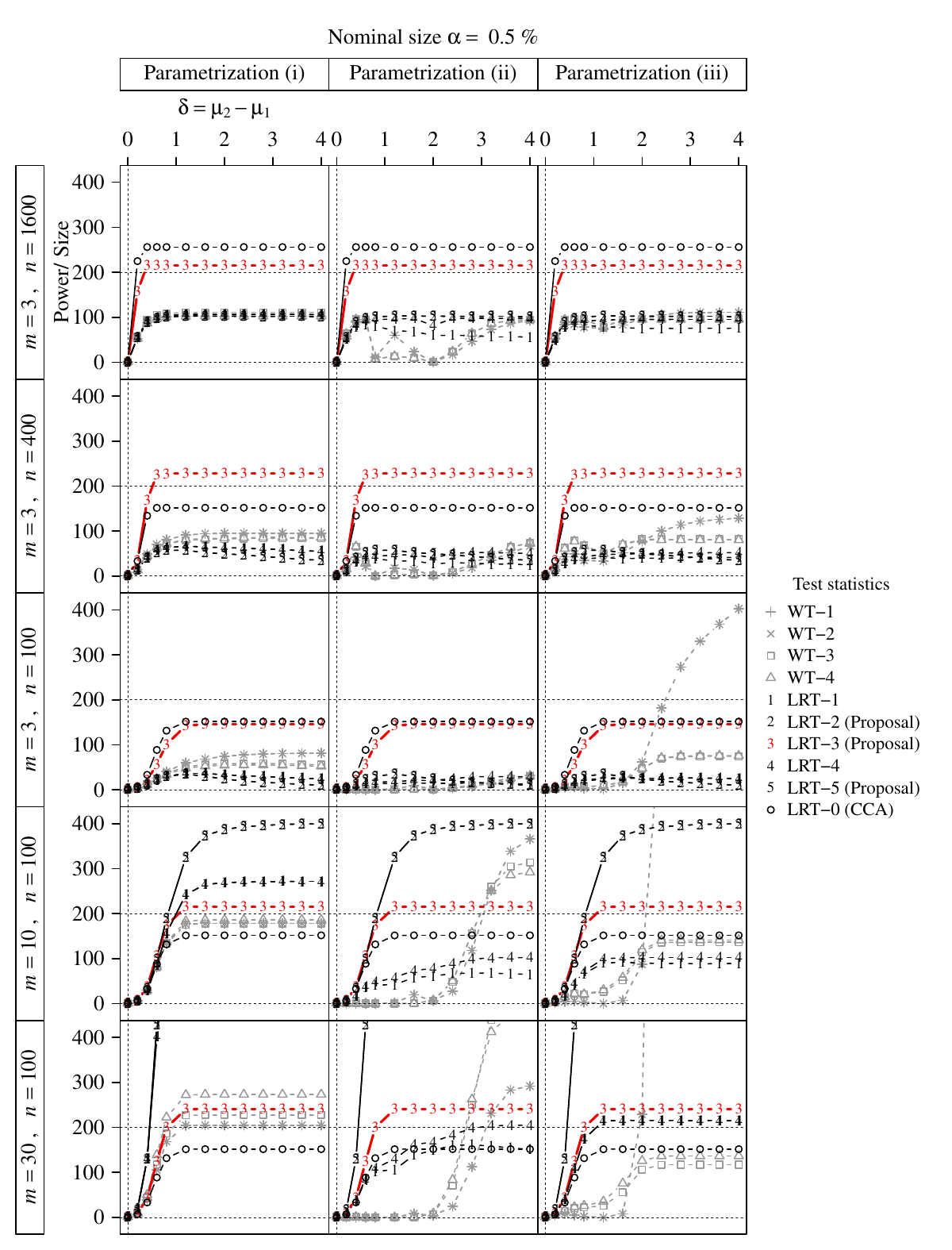}
\caption{\small 
The ratios of empirical power to empirical size under different parametrizations.
The nominal size is $\alpha=0.5\%$.
In each plot, the vertical axis denotes the ratio, 
and the horizontal axis denotes $\delta=\mu_2-\mu_1$.
The legend in Figure \ref{fig:pVal2} also applies here. 
The results under nominal size $5\%$ are shown in Figure \ref{fig:powerOverSize_5pc}.
}
\label{fig:powerOverSize_05pc}
\end{center} 
\end{figure}

\begin{figure}[h!]
\begin{center}
\includegraphics[width=\textwidth]{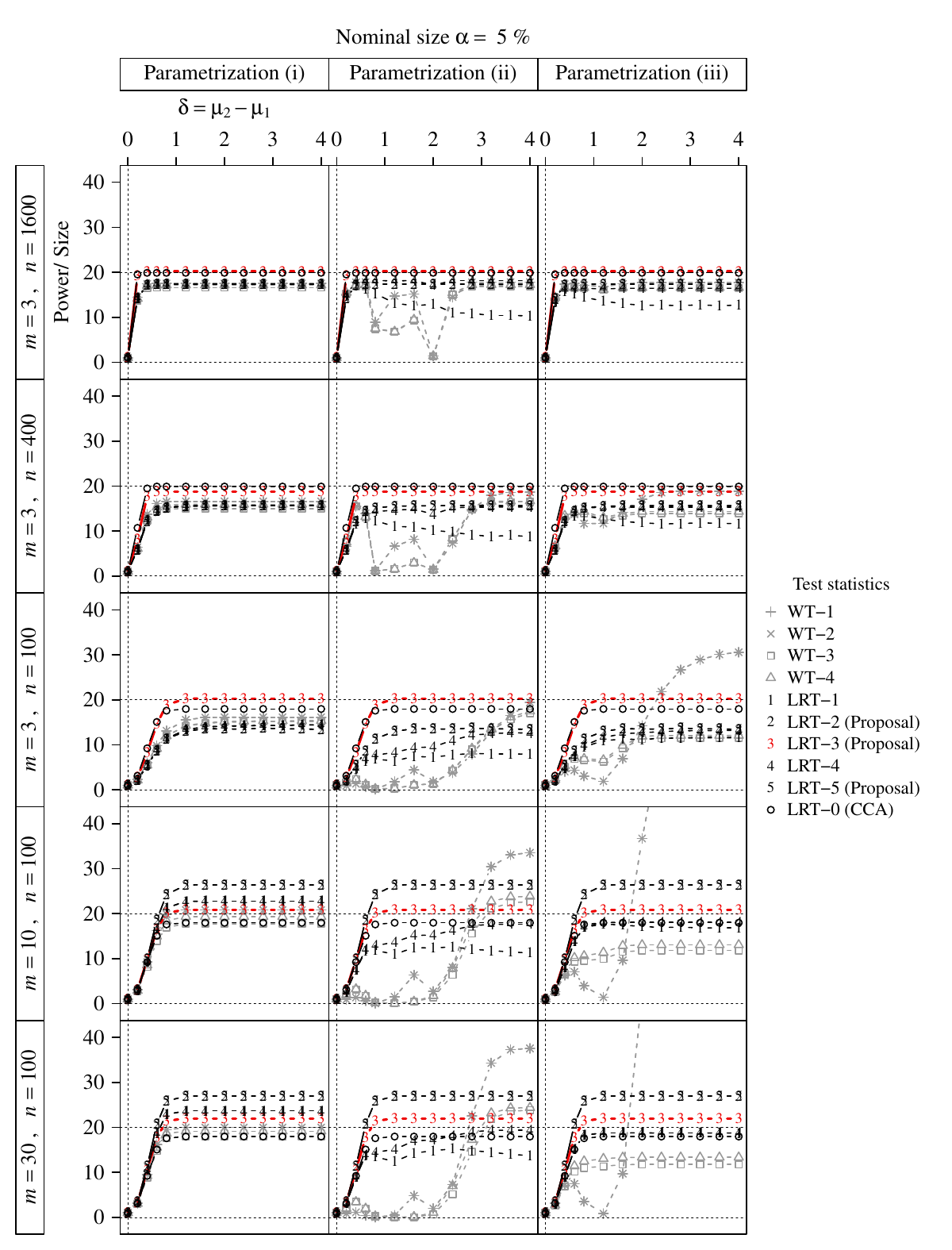} 
\caption{\small 
The ratios of empirical power to empirical size under different parametrizations.
The nominal size is $\alpha=5\%$.
In each plot, the vertical axis denotes the ratio, 
whereas the horizontal axis denotes $\delta=\mu_2-\mu_1$.
The legend in Figure \ref{fig:pVal2} also applies here. 
}
\label{fig:powerOverSize_5pc}
\end{center} 
\end{figure}

\begin{table*}[t]
\begin{center}
\renewcommand{\arraystretch}{0.5}
\caption{\small The range of empirical size $[\min \widehat{\alpha}, \max \widehat{\alpha}]$ in percentage,
where $\max$ and $\min$ are taken over the three parametrizations. 
Only one value is recorded for 
parametrization-invariant tests. 
The nominal size is $\alpha=0.5\%$.
The results under nominal size $\alpha=5\%$ are shown in Figure \ref{table:sizeError_5}.
}
\begin{tabular}{cccccc}\toprule
	&\multicolumn{5}{c}{\bf Range of empirical size: $[\min \widehat{\alpha}, \max \widehat{\alpha}]/\%$}\\
	\cmidrule(r){2-6}
$(n,m)$	 & $(1600,3)$ & $(400,3)$ & $(100,3)$ & $(100,10)$ & $(100,30)$  \\[0.5ex]
	\cmidrule(r){1-6}
	W-1	&	$[0.90,1.05] $	&	$ [0.76,1.05]$	&	$ [0.20,1.22]$	&	$ [0.07,0.56]$	&	$ [0.02,0.49]$	\\[0.5ex]
	W-2	&	$[0.90,1.05] $	&	$ [0.98,1.22]$	&	$ [0.93,1.25]$	&	$ [0.32,0.73]$	&	$ [0.20,0.85]$	\\[0.5ex]
	W-3	&	$[0.98,1.05] $	&	$ [0.98,1.25]$	&	$ [0.90,1.29]$	&	$ [0.34,0.71]$	&	$ [0.22,0.73]$	\\[0.5ex]
	W-4	&	$[0.90,1.05] $	&	$ [0.76,1.05]$	&	$ [0.20,1.22]$	&	$ [0.07,0.56]$	&	$ [0.02,0.49]$	\\[0.5ex]
	\cmidrule(r){1-6}
	L-1	&	$[0.90,1.03] $	&	$ [1.10,1.64]$	&	$ [1.15,1.49]$	&	$ [0.37,1.05]$	&	$ [0.10,0.46]$	\\[0.5ex]
	L-2	&	$[0.90,1.05] $	&	$ [1.10,1.76]$	&	$ [1.15,2.37]$	&	$ [0.37,0.98]$	&	$ [0.10,0.49]$	\\[0.5ex]
	L-3	&	$0.90$	&	$1.10$	&	$0.83$	&	$0.24$	&	$0.07$	\\[0.5ex]
	L-4	&	$0.90$	&	$1.10$	&	$0.83$	&	$0.24$	&	$0.07$	\\[0.5ex]
	L-5	&	$0.46$	&	$0.44$	&	$0.68$	&	$0.46$	&	$0.42$	\\[0.5ex]
	L-0	&	$0.39$	&	$0.66$	&	$0.66$	&	$0.66$	&	$0.66$	\\[0.5ex]
	\bottomrule
\end{tabular}
\label{table:sizeError_05}
\end{center}
\end{table*}

We also compare the performance of estimators of $\mathcal{r}_m$
for different $\delta$ and parametrizations.
In our experiment, we have $\mathcal{r}_m=1+1/m$ because we have set $\mathcal{r}=1$.
The MSEs of estimators 
$\widehat{f} = \widehat{r}/(1+\widehat{r})$
of $\mathcal{f}_m = \mathcal{r}_m/(1+\mathcal{r}_m)$
are shown in Figure \ref{fig:r_convMea}, in log scale. 
Clearly, the only estimator that is consistent, 
invariant to parametrization and robust against $\delta$ is 
our proposal $\widehat{f}_{\lrt}^{\rob} = \widehat{r}_{\lrt}^{\rob}/(1+\widehat{r}_{\lrt}^{\rob})$. 
It concentrates at the true value $\mathcal{f}_m$ 
quite closely even for small $m$ and $n$.
It verifies why L-5 has the greatest power.
On the other hand, the estimator $\widetilde{f}_{\lrt} = \widetilde{r}_{\lrt}/(1+\widetilde{r}_{\lrt})$ 
has a large MSE when $\delta \not{=} 0$.
It explains why L-1 is not powerful.

\begin{figure}[t!]
\begin{center}
\includegraphics[width=\textwidth]{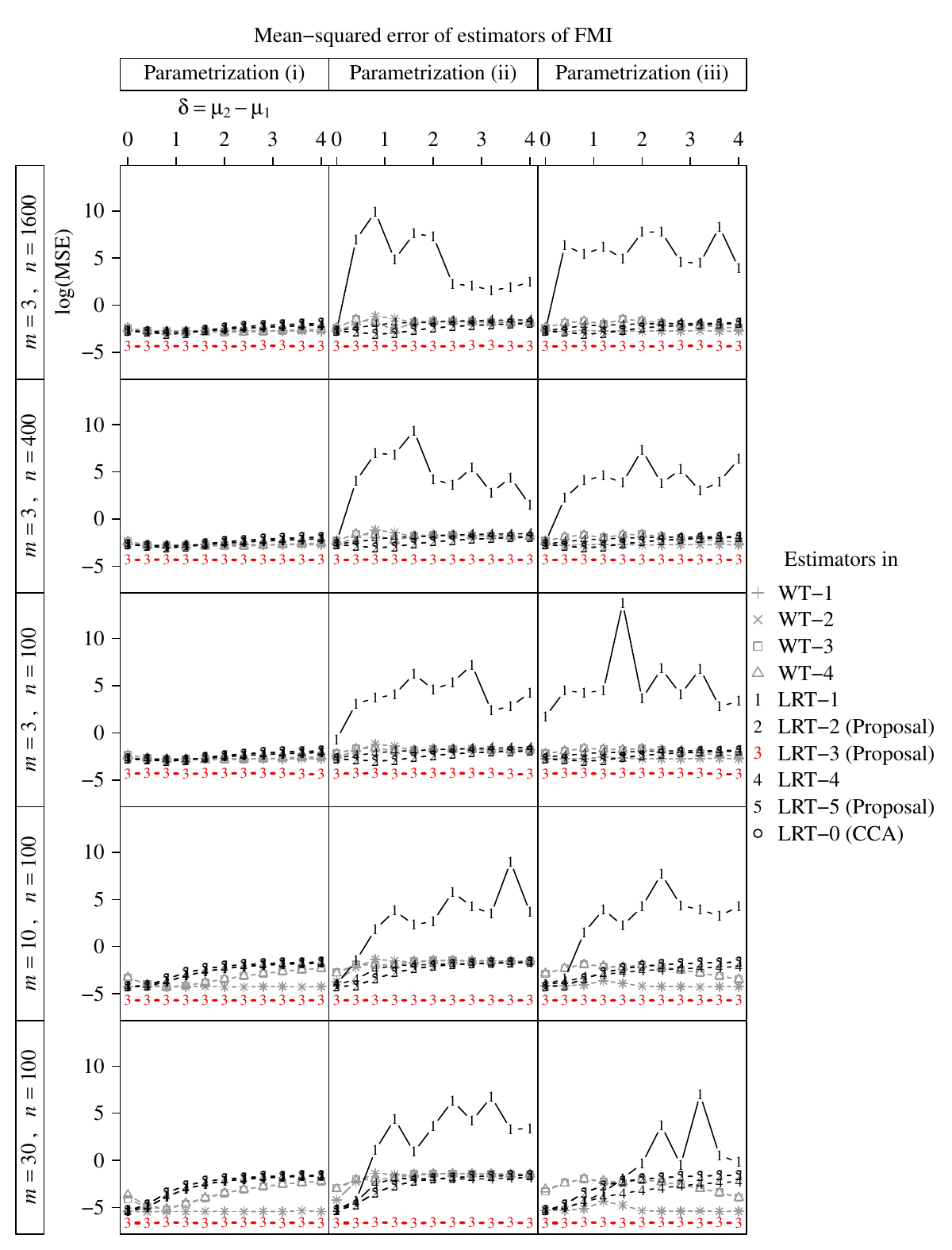}
\caption{\small The MSEs of
estimators of $\mathcal{f}_m$ used in the test statistics.
The vertical axis denotes the log of MSE, 
whereas the horizontal axis denotes the value of $\delta=\mu_2-\mu_1$.
The legend in Figure \ref{fig:pVal2} also applies here. 
}
\label{fig:r_convMea}
\end{center}
\end{figure}

\begin{table*}[t!]
\begin{center}
\caption{\small The range of empirical size $[\min \widehat{\alpha}, \max \widehat{\alpha}]$ in percentage,
where $\max$ and $\min$ are taken over the three parametrizations.
Only one value is recorded for 
parametrization-invariant tests. 
The nominal size is $\alpha=5\%$.
}
\renewcommand{\arraystretch}{0.5}
\begin{tabular}{cccccc}\toprule
	&\multicolumn{5}{c}{\bf Range of empirical size: $[\min \widehat{\alpha}, \max \widehat{\alpha}]/\%$}\\
	\cmidrule(r){2-6}
$(n,m)$	 & $(1600,3)$ & $(400,3)$ & $(100,3)$ & $(100,10)$ & $(100,30)$  \\[0.5ex]
	\cmidrule(r){1-6}
	W-1	&	$[5.62,5.71] $	&	$ [5.30,6.03]$	&	$ [3.22,6.20]$	&	$ [1.64,4.81]$	&	$ [1.37,5.00]$	\\[0.5ex]
	W-2	&	$[5.93,6.05] $	&	$ [6.08,7.18]$	&	$ [5.52,8.69]$	&	$ [4.42,8.47]$	&	$ [4.20,8.50]$	\\[0.5ex]
	W-3	&	$[5.81,6.03] $	&	$ [6.01,6.98]$	&	$ [5.37,8.28]$	&	$ [4.20,7.67]$	&	$ [4.10,7.50]$	\\[0.5ex]
	W-4	&	$[5.62,5.71] $	&	$ [5.30,6.03]$	&	$ [3.22,6.20]$	&	$ [1.64,4.81]$	&	$ [1.37,5.00]$	\\[0.5ex]
    \cmidrule(r){1-6}
	L-1	&	$[5.57,6.15] $	&	$ [6.37,6.57]$	&	$ [5.88,6.47]$	&	$ [4.39,5.66]$	&	$ [4.22,5.32]$	\\[0.5ex]
	L-2	&	$[5.52,6.10] $	&	$ [6.37,6.52]$	&	$ [5.88,7.47]$	&	$ [4.39,5.66]$	&	$ [4.22,5.32]$	\\[0.5ex]
	L-3	&	$5.76$	&	$6.37$	&	$5.42$	&	$3.78$	&	$3.71$	\\[0.5ex]
	L-4	&	$5.76$	&	$6.37$	&	$5.42$	&	$3.78$	&	$3.71$	\\[0.5ex]
	L-5	&	$4.96$	&	$5.32$	&	$4.93$	&	$4.79$	&	$4.54$	\\[0.5ex]
	L-0	&	$5.03$	&	$5.03$	&	$5.57$	&	$5.57$	&	$5.57$	\\[0.5ex]
		\bottomrule
\end{tabular}
\label{table:sizeError_5}
\end{center}
\end{table*}

\subsection{Supplements for Section~\ref{sec:MCexpUFMI}}\label{sec:MCexpUFMI_MI}
Let $n_j = \sum_{i=1}^n R_{ij}$ be the number of observed $j$th component.
Without loss of generality, 
assume $X_{\obs}$ is arranged in such a way that $R_{ij}\geq R_{i'j}$ for all $i<i'$ and $j$.
To impute the missing data, it is useful to represent $X_{i}$ by 
\begin{eqnarray*}
	\left[X_{i1} \mid \beta_1 , \tau_1^2\right] \sim \mathcal{N}(\beta_1, \tau^2_1) 
	\qquad\text{and} \qquad
	\left[X_{ij} \mid X_{i,1:(j-1)}, \beta_j, \tau_j^2\right] \sim \mathcal{N}(\beta_j^{\T} Z_{ij}, \tau^2_j),
\end{eqnarray*}
for $j=2,\ldots,p$,
where 
$\tau_1^2, \ldots, \tau_p^2 \in \mathbb{R}^+$, 
$\beta_j \in\mathbb{R}^{j}$,
$X_{i,1:(j-1)} = (X_{i1}, \ldots, X_{i,j-1})^{\T}$
and 
$Z_{ij} =(1, X_{i,1:(j-1)}^{\T})^{\T}$
for $j\ge 2$.
Denote the (complete-case) least squares estimators of $\beta_j$ and $\tau^2_j$ respectively by 
$$\widehat{\beta}_j = (Z_j^{\T}Z_j)^{-1}Z_j^{\T} W_j \quad {\rm and}\quad  \widehat{\tau}^2_j = \frac{(W_j - Z_j\widehat{\beta}_j)^{\T}(W_j - Z_j\widehat{\beta}_j)}{n_j-j},$$ 
where $Z_j = (Z_{1j}, \ldots, Z_{n_jj})^{\T}$ 
and $W_j = (X_{1j}, \ldots, X_{n_jj})^{\T}$.

We assume a Bayesian imputation model with the non-informative prior 
$
	f(\beta_1, \ldots, \beta_p, \tau^2_1, \ldots, \tau^2_p)
	\;\propto\; 1/(\tau^2_1\cdots \tau^2_p).
$
For $\ell=1,\ldots,m$, denote the $\ell$th imputed data set by $X^{(\ell)}$, 
whose $(i,j)$th element is $X_{ij}^{(\ell)}$. 
If $1\leq j\leq p$ and $i\leq n_j$, 
then $X_{ij}^{(\ell)} = X_{ij}$, otherwise $X_{ij}^{(\ell)}$ is filled in 
by recursing the following steps for $j=2,\ldots,p$.

\begin{enumerate}[noitemsep]
	\item Draw a sample $(\tau_j^{(\ell)})^{2}$ from 
			$\widehat{\tau}^2_j (n_j-j)/ \chi^2_{n_j-j}$.
	\item Draw a sample $\beta_j^{(\ell)}$
			from $\mathcal{N}_j(\widehat{\beta}_j, (\tau_j^{(\ell)})^{2} (Z_j^{\T}Z_j)^{-1} )$.
	\item Draw a sample $X_{ij}^{(\ell)}$
			from 
			$\mathcal{N}( (\beta_j^{(\ell)})^{\T}Z_{ij}^{(\ell)},  (\tau_j^{(\ell)})^{2})$
			for $i = n_j+1, \ldots, n$, 
			where $Z_{ij}^{(\ell)} =(1, (X_{i,1:(j-1)}^{(\ell)})^{\T})^{\T}$.
\end{enumerate}
With the $\ell$th imputed data set, 
the $H_0$-constrained MLEs of $\mu$ and $\Sigma$ are
$\widehat{\mu}_0^{(\ell)} = \bm{0}_p$ and 
$\widehat{\Sigma}_0^{(\ell)} = (X^{(\ell)})^{\T}(X^{(\ell)})/n$; 
whereas the unconstrained counterparts are 
$\widehat{\mu}^{(\ell)} = \bm{1}_n^{\T}X^{(\ell)}/n$ and
$\widehat{\Sigma}^{(\ell)} = (X^{(\ell)}-\widehat{\mu}^{(\ell)})^{\T}(X^{(\ell)}-\widehat{\mu}^{(\ell)})/n$.

The partial result is shown in Figure \ref{fig:jmi_3Jun2018_eg4a2} of the main text, whereas 
the full version is shown in Figure \ref{fig:jmi_3Jun2018_eg4a2_full}.

\begin{figure}[t!]
\begin{center}
\includegraphics[width=1\textwidth]{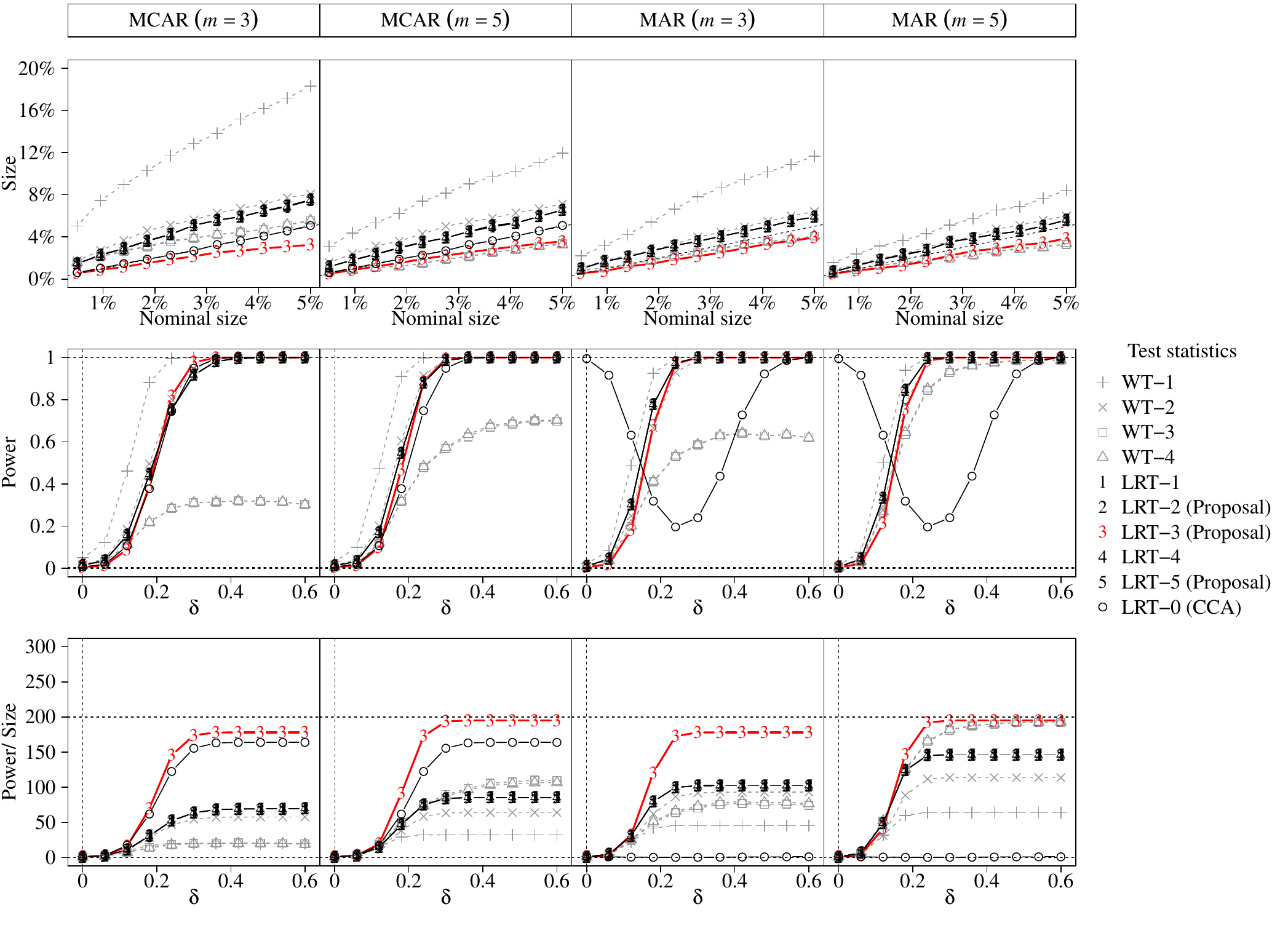}
\vspace{-0.9cm}
\caption{\small The empirical size, empirical power, and their ratio. 
The first row of plots show the empirical sizes.
The size of the complete-case test (C2) under MAR is off the chart (always equals to one) because it is invalid. 
The second and third rows of plots show the powers 
and the power-to-size ratios, respectively, 
where the nominal size is $0.5\%$.}
\label{fig:jmi_3Jun2018_eg4a2_full}
\end{center}
\end{figure}

\subsection{Applications to a Care-Survival Data}\label{sec:contTable_proc}
\cite{mengRubin92} considered the data given in Table \ref{table:realData_data set}, 
where $i$, $j$ and $k$ index, respectively, 
amount of parental care (less or more, corresponding to $i=1,2$), 
and survival status (died or survived, corresponding to $j=1,2$), and  
clinic (A or B, corresponding to $k=1,2$). 
The label $k$ is missing for some observations.
The missing mechanism was assumed to be ignorable.   
We consider two null hypotheses: 
($H_0$) the clinic and parental care are conditionally independent given the survival status, and 
($H_0'$) all three variables are independent. 
It is remarked that testing the conditional independence model (i.e., $H_0$) is useful from a modeling perceptive.
If $H_0$ cannot be rejected, then one may be tempted to adopt the more parsimonious null model (for the cell probabilities). 
The same model is also suggested in \cite{rubinLittle2002} and \cite{mengRubin92}.

Our aim is to investigate the impact 
on $\{\widetilde{D}_{\Stack},\widehat{D}_{\Stack}^{+},\widehat{D}^{\rob}_{\Stack}\}$ 
by the parametrization of the cell probabilities 
\[
	\pi_{ijk} = \mathsf{P}(\text{parental care}=i, \text{survival status}=j, \text{clinic label}=k) 
\]
for $i,j,k\in\{1,2\}$;
and the impact on  
$\{\widetilde{r}_{\lrt},\widehat{r}^{+}_{\Stack},\widehat{r}^{\rob}_{\Stack}\}$ 
under different null hypotheses. 
Here the full model parameter vector can be 
expressed as $\psi = (\pi_{111}, \pi_{112}, \pi_{121}, \pi_{122}, \pi_{211}, \pi_{212},\pi_{221})^{\T}$.
Since the restrictions imposed by $H_0$ are 
$\pi_{ijk} = (\pi_{1jk}+\pi_{2jk})(\pi_{ij1}+\pi_{ij2})$ for $j=1,2$,  
one may express the parameter of interest as $\theta = (\theta_1, \theta_2)^{\T}$, where 
$\theta_j = \pi_{ijk} - (\pi_{1jk}+\pi_{2jk})(\pi_{ij1}+\pi_{ij2})$ for $j=1,2$.
Then $H_0$ can be equivalently stated as $\theta = \theta_0$, where $\theta_0 = (0,0)^{\T}$. 
Similarly, the parameter of interest under $H_0'$ can be defined.

\begin{table}[h!]
\renewcommand{\arraystretch}{0.5}
\begin{center}
  \caption{\small Data from \cite{mengRubin92}. 
				The notation ``?'' indicates missing label. 
				}
  \begin{tabular}{@{} lcc|ccccc @{}}
    \toprule
\multicolumn{2}{l}{{\bf Parental care} ($i$)} && \multicolumn{2}{c}{{\bf Less}} & \multicolumn{2}{c}{{\bf More}}\\
\cmidrule(r){4-5}	\cmidrule(r){6-7}	
   \multicolumn{2}{l}{{\bf Survival Status} ($j$)} && Died & Survived & Died & Survived \\ 
    \midrule
 \multicolumn{2}{l}{{\bf Clinic Label} ($k$)}&    A &  3 & 176  & 4 & 293 \\ 
    &&B & 17 & 197  & 2 & 23  \\ 
    &&? & 10 & 150  & 5 & 90  \\ 
    \bottomrule
  \end{tabular} 
  \label{table:realData_data set}
\end{center}
\end{table}

The computation of the stacked MI estimators of $\{\pi_{ijk}\}$ is presented in \ref{sec:contTable_proc} of the Appendix. We consider three parametrizations:
(i) $\psi_{ijk} = \pi_{ijk}$;  
(ii) $\psi_{ijk} = \log\{\pi_{ijk}/(1-\pi_{ijk})\}$; and   
(iii) $\psi_{ij1} = \pi_{ij1}$ and $\psi_{ij2} = \pi_{ij2}/\pi_{ij1}$. 
Denote the $p$-values of tests
$\{\widetilde{D}_{\lrt}, \widehat{D}_{\Stack}^{+},\widehat{D}^{\rob}_{\Stack}\}$
by 
$\{\widetilde{p}_{\lrt},\widehat{p}_{\Stack}^{+},\widehat{p}^{\rob}_{\Stack}\}$, respectively.
The results are summarized in Table \ref{table:lrt_contingencyTable_parametrization123_full}.
Clearly, only $\widehat{r}_{\Stack},\widehat{r}_{\rob},\widehat{D}_{\Stack}^{+},\widehat{D}^{\rob}_{\Stack}$
are always non-negative and parametrization-invariant.
Some of the values of $\widetilde{r}_{\lrt}$ and $\widetilde{D}_{\lrt}$ are negative, 
leading to the meaningless $\widetilde{p}_{\lrt}=1$. 
For testing $H_0$, 
we have $\widehat{D}_{\Stack}^{+} \approx \widehat{D}_{\Stack}^{\rob}$. 
For testing $H_0'$, 
$\widehat{D}_{\Stack}^{+}$ and $\widehat{D}_{\Stack}^{\rob}$ 
are not very close to each other,
but they both lead to essentially zero $p$-value.
These results reconfirm  
the conclusions in \citet{mengRubin92}.
Moreover,
only $\widehat{r}_{\Stack}^{\rob}$ does not change under 
different null hypotheses.

The MI data sets are generated from a Bayesian model in Section~4.2 of \cite{mengRubin92}.
The $\ell$th imputed log-likelihood function is
$\log f(X^{(\ell)}\mid\pi) = \sum_c n_{c}^{(\ell)} \log \pi_c$,
where 
$X^{(\ell)}$ are the cell counts $n_{c}^{(\ell)}$ in the $\ell$th 
imputed data set. 
Hence the unconstrained MLE of $\pi_c$ is 
$\widehat{\pi}_{c}^{(\ell)}= {n_{c}^{(\ell)}}/{n_{+}^{(\ell)}}$,
where $n_{+}^{(\ell)} = \sum_c n_{c}^{(\ell)}$.
Let $n_{c}^{+} = \sum_{\ell=1}^m n_{c}^{(\ell)}$. 
Consequently, the joint log-likelihood based on the stacked data is 
\begin{equation}\label{eqt:egMeng_jointLike}
	\log f(X^{\Stack}\mid\pi)
		= \sum_{\ell=1}^m \sum_c n_{c}^{(\ell)} \log \pi_c 
		= \sum_c n_{c}^{+} \log \pi_c ,  
\end{equation}
Thus the unconstrained MLE with respect to (\ref{eqt:egMeng_jointLike}) is
$\widehat{\pi}_{c}^{\Stack} = {n_{c}^{+}}/{n_{+}^{+}}$,  
where $n_{+}^{+} = \sum_c n_{c}^{+}$.
Similarly, we can find the constrained MLEs under a given null.

\begin{table*}[h!]
\begin{center}
\caption{\small The LRTs using 
$\widetilde{D}_{\lrt}$, $\widehat{D}_{\Stack}^{+}$ and $\widehat{D}^{\rob}_{\Stack}$
under different parametrizations in Section~\ref{sec:contTable_proc}.
}
\setlength{\tabcolsep}{2pt}
\renewcommand{\arraystretch}{0.5}
\begin{tabular}{c|ccc|ccc}\toprule
&\multicolumn{5}{c}{\bf Parametrization (i): identity map}\\  
\cmidrule(r){2-7}
&\multicolumn{3}{c}{$H_0$: Conditional independence}	& \multicolumn{2}{c}{$H_0$: Full 
independence}\\
\cmidrule(r){2-4}\cmidrule(r){5-7}
$m$	 & $\widetilde{r}_{\lrt},\widehat{r}^{+}_{\Stack},\widehat{r}^{\rob}_{\Stack}$ & $\widetilde{D}_{\lrt},\widehat{D}_{\Stack}^{+},\widehat{D}_{\Stack}^{\rob}$ & $\widetilde{p}_{\lrt},\widehat{p}_{\Stack}^{+},\widehat{p}_{\Stack}^{\rob}$ & $\widetilde{r}_{\lrt},\widehat{r}^{+}_{\Stack},\widehat{r}^{\rob}_{\Stack}$ & $\widetilde{D}_{\lrt},\widehat{D}_{\Stack}^{+},\widehat{D}_{\Stack}^{\rob}$ & $\widetilde{p}_{\lrt},\widehat{p}_{\Stack}^{+},\widehat{p}_{\Stack}^{\rob}$  \\[0.5ex]
\cmidrule(r){1-1}\cmidrule(r){2-2}\cmidrule(r){3-3}\cmidrule(r){4-4}\cmidrule(r){5-5}\cmidrule(r){6-6}\cmidrule(r){7-7}
	$2$	&	$ 0.63, 0.64, 0.83$	&	$ 0.14, 0.14, 0.12$	&	$ 0.87, 0.87, 0.89$	&	$ 0.53, 0.53, 0.83$	&	$44.4,44.4,37.1$	&	$ 0,0,0$	\\[0.5ex]
	$3$	&	$ 0.54, 0.54, 0.38$	&	$ 0.08, 0.08, 0.09$	&	$ 0.93, 0.93, 0.92$	&	$ 0.31, 0.31, 0.38$	&	$54.2,54.2,51.4$	&	$ 0,0,0$	\\[0.5ex]
	$5$	&	$ 0.49, 0.48, 0.89$	&	$ 0.12, 0.12, 0.10$	&	$ 0.89, 0.89, 0.91$	&	$ 0.72, 0.72, 0.89$	&	$40.8,40.8,37.1$	&	$ 0,0,0$	\\[0.5ex]
	$7$	&	$ 0.23, 0.23, 0.47$	&	$ 0.06, 0.06, 0.05$	&	$ 0.94, 0.94, 0.95$	&	$ 0.31, 0.31, 0.47$	&	$53.2,53.2,47.6$	&	$ 0,0,0$	\\[0.5ex]
	$10$	&	$ 0.50, 0.50, 0.70$	&	$ 0.14, 0.14, 0.12$	&	$ 0.87, 0.87, 0.88$	&	$ 0.56, 0.56, 0.70$	&	$45.4,45.4,41.7$	&	$ 0,0,0$	\\[0.5ex]
	$25$	&	$ 0.35, 0.35, 0.47$	&	$ 0.06, 0.06, 0.06$	&	$ 0.94, 0.94, 0.95$	&	$ 0.35, 0.35, 0.47$	&	$51.4,51.4,47.0$	&	$ 0,0,0$	\\[0.5ex]
	$50$	&	$ 0.31, 0.31, 0.45$	&	$ 0.11, 0.11, 0.10$	&	$ 0.90, 0.90, 0.91$	&	$ 0.33, 0.33, 0.45$	&	$51.5,51.5,47.3$	&	$ 0,0,0$	\\[0.5ex]
\cmidrule(r){1-7}
&\multicolumn{5}{c}{\bf Parametrization (ii): logit transformation }\\  
\cmidrule(r){2-7}
&\multicolumn{3}{c}{$H_0$: Conditional independence}	& \multicolumn{2}{c}{$H_0$: Full 
independence}\\
\cmidrule(r){2-4}\cmidrule(r){5-7}
$m$	 & $\widetilde{r}_{\lrt},\widehat{r}^{+}_{\Stack},\widehat{r}^{\rob}_{\Stack}$ & $\widetilde{D}_{\lrt},\widehat{D}_{\Stack}^{+},\widehat{D}_{\Stack}^{\rob}$ & $\widetilde{p}_{\lrt},\widehat{p}_{\Stack}^{+},\widehat{p}_{\Stack}^{\rob}$ & $\widetilde{r}_{\lrt},\widehat{r}^{+}_{\Stack},\widehat{r}^{\rob}_{\Stack}$ & $\widetilde{D}_{\lrt},\widehat{D}_{\Stack}^{+},\widehat{D}_{\Stack}^{\rob}$ & $\widetilde{p}_{\lrt},\widehat{p}_{\Stack}^{+},\widehat{p}_{\Stack}^{\rob}$  \\[0.5ex]
\cmidrule(r){1-1}\cmidrule(r){2-2}\cmidrule(r){3-3}\cmidrule(r){4-4}\cmidrule(r){5-5}\cmidrule(r){6-6}\cmidrule(r){7-7}
	$2$	&	$ 1.23, 0.64, 0.83$	&	$ 0.01, 0.14, 0.12$	&	$ 0.99, 0.87, 0.89$	&	$ 0.98, 0.53, 0.83$	&	$34.2,44.4,37.1$	&	$ 0,0,0$	\\[0.5ex]
	$3$	&	$ 1.08, 0.54, 0.38$	&	$-0.07, 0.08, 0.09$	&	$ 1.00, 0.93, 0.92$	&	$ 0.61, 0.31, 0.38$	&	$43.9,54.2,51.4$	&	$ 0,0,0$	\\[0.5ex]
	$5$	&	$ 1.02, 0.48, 0.89$	&	$-0.09, 0.12, 0.10$	&	$ 1.00, 0.89, 0.91$	&	$ 1.40, 0.72, 0.89$	&	$29.0,40.8,37.1$	&	$ 0,0,0$	\\[0.5ex]
	$7$	&	$ 0.45, 0.23, 0.47$	&	$-0.07, 0.06, 0.05$	&	$ 1.00, 0.94, 0.95$	&	$ 0.58, 0.31, 0.47$	&	$43.9,53.2,47.6$	&	$ 0,0,0$	\\[0.5ex]
	$10$	&	$ 0.99, 0.50, 0.70$	&	$-0.10, 0.14, 0.12$	&	$ 1.00, 0.87, 0.88$	&	$ 1.09, 0.56, 0.70$	&	$33.7,45.4,41.7$	&	$ 0,0,0$	\\[0.5ex]
	$25$	&	$ 0.71, 0.35, 0.47$	&	$-0.14, 0.06, 0.06$	&	$ 1.00, 0.94, 0.95$	&	$ 0.68, 0.35, 0.47$	&	$41.0,51.4,47.0$	&	$ 0,0,0$	\\[0.5ex]
	$50$	&	$ 0.63, 0.31, 0.45$	&	$-0.10, 0.11, 0.10$	&	$ 1.00, 0.90, 0.91$	&	$ 0.65, 0.33, 0.45$	&	$41.3,51.5,47.3$	&	$ 0,0,0$	\\[0.5ex]
\cmidrule(r){1-7}
&\multicolumn{5}{c}{\bf Parametrization (iii): ratios of probabilities}\\   
\cmidrule(r){2-7}
&\multicolumn{3}{c}{$H_0$: Conditional independence}	& \multicolumn{2}{c}{$H_0$: Full 
independence}\\
\cmidrule(r){2-4}\cmidrule(r){5-7}
$m$	 & $\widetilde{r}_{\lrt},\widehat{r}^{+}_{\Stack},\widehat{r}^{\rob}_{\Stack}$ & $\widetilde{D}_{\lrt},\widehat{D}_{\Stack}^{+},\widehat{D}_{\Stack}^{\rob}$ & $\widetilde{p}_{\lrt},\widehat{p}_{\Stack}^{+},\widehat{p}_{\Stack}^{\rob}$ & $\widetilde{r}_{\lrt},\widehat{r}^{+}_{\Stack},\widehat{r}^{\rob}_{\Stack}$ & $\widetilde{D}_{\lrt},\widehat{D}_{\Stack}^{+},\widehat{D}_{\Stack}^{\rob}$ & $\widetilde{p}_{\lrt},\widehat{p}_{\Stack}^{+},\widehat{p}_{\Stack}^{\rob}$  \\[0.5ex]
\cmidrule(r){1-1}\cmidrule(r){2-2}\cmidrule(r){3-3}\cmidrule(r){4-4}\cmidrule(r){5-5}\cmidrule(r){6-6}\cmidrule(r){7-7}
	$2$	&	$    1.06,    0.64,    0.83 $	&	$     0.04,    0.14,    0.12 $	&	$     0.96,    0.87,    0.88 $	&	$    -0.38,    0.53,    0.83 $	&	$   109,   44.4,   37.1 $	&	$     0,0,0$	\\[0.5ex]
	$3$	&	$   -2.35,    0.54,    0.38 $	&	$    -1.16,    0.08,    0.09 $	&	$     1.00,    0.93,    0.92 $	&	$    -1.22,    0.31,    0.38 $	&	$  -321,   54.2,   51.4 $	&	$     1,0,0$	\\[0.5ex]
	$5$	&	$   -2.64,    0.48,    0.89 $	&	$    -1.38,    0.12,    0.10 $	&	$     1.00,    0.89,    0.91 $	&	$    -2.24,    0.72,    0.89 $	&	$   -58,   40.8,   37.1 $	&	$     1,0,0$	\\[0.5ex]
	$7$	&	$   -0.01,    0.23,    0.47 $	&	$     0.25,    0.06,    0.05 $	&	$     0.78,    0.94,    0.95 $	&	$    -0.34,    0.31,    0.47 $	&	$   107,   53.2,   47.6 $	&	$     0,0,0$	\\[0.5ex]
	$10$	&	$   -2.04,    0.50,    0.70 $	&	$    -2.20,    0.14,    0.12 $	&	$     1.00,    0.87,    0.88 $	&	$    -1.85,    0.56,    0.70 $	&	$   -86,   45.4,   41.7 $	&	$     1,0,0$	\\[0.5ex]
	$25$	&	$   -1.39,    0.35,    0.47 $	&	$    -4.30,    0.06,    0.06 $	&	$     1.00,    0.94,    0.95 $	&	$    -1.12,    0.35,    0.47 $	&	$  -603,   51.4,   47.0 $	&	$     1,0,0$	\\[0.5ex]
	$50$	&	$   -1.22,    0.31,    0.45 $	&	$    -7.39,    0.11,    0.10 $	&	$     1.00,    0.90,    0.91 $	&	$    -1.06,    0.33,    0.45 $	&	$ -1136,   51.5,   47.3 $	&	$     1,0,0$	\\[0.5ex]
\bottomrule
\end{tabular}
\label{table:lrt_contingencyTable_parametrization123_full}
\end{center}
\end{table*}

\section{Proofs}\label{sec:proof}
\begin{proof}[Proof of Theorem~\ref{thm:AsyEq_dBar_LL}]
(i, ii) From (\ref{eqt:bardd_combinedLike}), 
we know $\widehat{d}_{\lrt} \geq 0$ is invariant to parametrization $\psi$.
(iii) Since $\widehat{d}_{\lrt}$ is invariant to transformation of $\psi$,
we assume, without loss of generality, 
that $\psi$ admits a parameterization such that 
$\Cov(\widehat{\theta}^{(\ell)}, \widehat{\eta}^{(\ell)}) \bumpeq \mathbf{0}$
by taking suitable linear transformation of $\psi$.
Also write $U^{(\ell)}_{\eta}$
as an efficient estimator of $\Var(\widehat{\eta})$ based on $X^{(\ell)}$;
and recall that $U^{(\ell)}_{\theta}= U^{(\ell)}$ is an efficient estimator of 
$\Var(\widehat{\theta})$ based on $X^{(\ell)}$.

Using Taylor's expansion on 
$\psi\mapsto\overline{\loglik}(\psi)= m^{-1}\sum_{\ell=1}^m \log f(X^{(\ell)}\mid\psi)$ 
around  
$\widehat{\psi}^{*} = ((\widehat{\theta}^{*})^{\T},(\widehat{\eta}^{*})^{\T})^{\T}$, 
we know that for $\psi\bumpeq \widehat{\psi}^{*}$,
\begin{equation}\label{eqt:Taylor}
	\overline{\loglik}(\psi)
		\bumpeq \overline{\loglik}(\widehat{\psi}^{*})
			- \frac{1}{2} \left( \psi -\widehat{\psi}^{*} \right)^{\T} 
				\overline{I}(\widehat{\psi}^{*}) \left( \psi - \widehat{\psi}^{*} \right), 
\end{equation}
where 
$\overline{I}(\psi) = - {\partial^2 \overline{\loglik}(\psi)}/{\partial \psi\partial \psi^{\T}}$, 
which satisfies 
\begin{equation}\label{eqt:blockFormFishInfo}
	\overline{I} (\widehat{\psi}^{*}) 
		\bumpeq 
	\left( \begin{array}{cc}
	 	\overline{U}^{-1}_{\theta} & \mathbf{0} \\
		\mathbf{0} & \overline{U}^{-1}_{\eta}
	 \end{array}\right)
\end{equation}
with $\overline{U}_{\eta} = m^{-1}\sum_{i=1}^m U^{(\ell)}_{\eta}$.
Under the null, $\widehat{\psi}^{*}\bumpeq\widehat{\psi}_{0}^{*}$.
So, using (\ref{eqt:Taylor}), we have
\begin{eqnarray}\label{eqt:proof_bard_decomp}
	\widehat{d}_{\lrt}
		&\bumpeq& \left( \widehat{\psi}_{0}^{*}- \widehat{\psi}^{*}\right)^{\T} 
					 \overline{I} (\widehat{\psi}^{*})
	 			\left( \widehat{\psi}_{0}^{*}-\widehat{\psi}^{*} \right), \nonumber \\
		&\bumpeq& \left( \begin{array}{c}
						\theta_0 - \widehat{\theta}^{*} \\
						\widehat{\eta}(\theta_0) - \widehat{\eta}(\widehat{\theta}^{*})
					\end{array} \right)^{\T} 
					\left( \begin{array}{cc}
					 	\overline{U}^{-1}_{\theta} & \mathbf{0} \\
						\mathbf{0} & \overline{U}^{-1}_{\eta}
					 \end{array}\right)
	 			\left( \begin{array}{c}
						\theta_0 - \widehat{\theta}^{*} \\
						\widehat{\eta}(\theta_0) - \widehat{\eta}(\widehat{\theta}^{*})
					\end{array} \right)  \nonumber\\
		&\bumpeq& (\overline{\theta}^{\T}-\theta_0) \overline{U}^{-1}_{\theta} (\overline{\theta}^{\T}-\theta_0)
		= \widetilde{d}_{\wt}^{\prime}, 
\end{eqnarray}
where we have used 
(a) $\widehat{\theta}^{*}\bumpeq \overline{\theta}$; see, e.g., Lemma 1 of \citet{wangRobins1998},
and (b) $\widehat{\eta}(\theta_0) - \widehat{\eta}(\widehat{\theta}^{*}) = O_p(1/n)$
if $\theta_0 - \widehat{\theta}^{*}=O_p(1/\sqrt{n})$; see \cite{coxReid1987}.
Since $\widetilde{d}_{\wt}^{\prime}\bumpeq \widetilde{d}_{\lrt}$
\citep*{mengRubin92}, we have $\widehat{d}_{\lrt}\bumpeq \widetilde{d}_{\lrt}$. 
\end{proof}

\begin{proof}[Proof of Proposition \ref{prop:condForPositiveR}]
The given condition implies that 
\begin{gather*}
	\widehat{\psi}^{(\ell)} 
		= ( (\widehat{\theta}^{(\ell)})^{\T}, (\widehat{\eta}^{(\ell)})^{\T} )^{\T},
	\qquad 	
	\widehat{\psi}_{0}^{(\ell)} 
		= (\theta_0^{\T}, (\widehat{\eta}^{(\ell)})^{\T})^{\T} ,\\
	\widehat{\psi}^{*} 
		= ( (\widehat{\theta}^{*})^{\T}, (\widehat{\eta}^{*})^{\T} )^{\T},
	\qquad
	\widehat{\psi}_{0}^{*} 
		= (\theta_0^{\T}, (\widehat{\eta}^{*})^{\T})^{\T} .
\end{gather*}
Clearly, we also have the decomposition:
$L^{(\ell)}(\psi) = L^{(\ell)}_{\dag}(\theta) + L^{(\ell)}_{\ddag}(\eta)$ for all $\ell$, 
where $L^{(\ell)}_{\dag}(\theta) = L_{\dag}(\theta\mid X^{(\ell)})$
and $L^{(\ell)}_{\ddag}(\eta) = L_{\ddag}(\eta\mid X^{(\ell)})$.
Then, 
\begin{eqnarray*}
	\overline{d}_{\lrt} - \widehat{d}_{\lrt}
		&=& \frac{2}{m}\sum_{\ell=1}^m \left\{ 
				  L^{(\ell)}(\widehat{\psi}^{(\ell)}) 
				- L^{(\ell)}(\widehat{\psi}_0^{(\ell)}) 
				- L^{(\ell)}(\widehat{\psi}^{*}) 
				+ L^{(\ell)}(\widehat{\psi}_0^{*}) \right\}\\
		&= &\frac{2}{m}\sum_{\ell=1}^m \left\{ 
				  L^{(\ell)}_{\dag}(\widehat{\theta}^{(\ell)}) 
				- L_{\dag}^{(\ell)}(\widehat{\theta}^{*}) \right\} \geq 0
\end{eqnarray*}
since $L^{(\ell)}_{\dag}(\widehat{\theta}^{(\ell)}) \geq L_{\dag}^{(\ell)}(\widehat{\theta}^{*})$ for all $\ell$.  
\end{proof}

\begin{proof}[Proof of Corollary \ref{coro:finitenessOfr}]
Applying Taylor's expansion on $\psi\mapsto\loglik^{(\ell)}(\psi)$, 
we can find $\widecheck{\psi}{}^{(\ell)}$
lying on the line segment joining $\widehat{\psi}^{(\ell)}$ and $\widehat{\psi}_{0}^{(\ell)}$
such that 
\[
	\loglik^{(\ell)}(\widehat{\psi}_{0}^{(\ell)}) 
		= \loglik^{(\ell)}(\widehat{\psi}^{(\ell)}) 
				- \frac{1}{2} \left( \widehat{\psi}_{0}^{(\ell)} - \widehat{\psi}^{(\ell)} \right)^{\T} 
				 I^{(\ell)}(\widecheck{\psi}{}^{(\ell)}) \left( \widehat{\psi}_{0}^{(\ell)} - \widehat{\psi}^{(\ell)} \right),
\]
where $I^{(\ell)}(\psi) = - \partial^2 \loglik^{(\ell)}(\psi) /\partial \psi\partial\psi^{\T}$.
By the lower order variability of $I^{(\ell)}(\widecheck{\psi}{}^{(\ell)})$,
we can find $\widecheck{\psi}{}^*$ such that 
$I^{(\ell)}(\widecheck{\psi}{}^{(\ell)}) \bumpeq  I^{(\ell)} (\widecheck{\psi}{}^*)$ for all $\ell$.
Then, using similar techniques as in (\ref{eqt:blockFormFishInfo}) and (\ref{eqt:proof_bard_decomp}), we have
\begin{eqnarray}\label{eqt:diffLL_H01}
	\loglik^{(\ell)}(\widehat{\psi}^{(\ell)}) - \loglik^{(\ell)}(\widehat{\psi}_{0}^{(\ell)}) 
		&\bumpeq&  \frac{1}{2} \left( \widehat{\psi}_{0}^{(\ell)} - \widehat{\psi}^{(\ell)} \right)^{\T} 
				 		I^{(\ell)} (\widecheck{\psi}{}^*) 
				 		\left( \widehat{\psi}_{0}^{(\ell)} - \widehat{\psi}^{(\ell)} \right) \nonumber \\
		&\bumpeq& \frac{1}{2} \left( \theta_0 - \widehat{\theta}^{(\ell)} \right)^{\T} 
						\widecheck{U}^{-1} \left( \theta_0 - \widehat{\theta}^{(\ell)} \right) 
\end{eqnarray}
for some matrix $\widecheck{U}$.
Similarly, we have 
\begin{equation}\label{eqt:diffLL_H01_ave}
	\loglik^{(\ell)}(\widehat{\psi}^{*}) - \loglik^{(\ell)}(\widehat{\psi}_{0}^{*}) 
		\bumpeq \frac{1}{2} \left( \theta_0 - \widehat{\theta}^{*} \right)^{\T} 
				\widecheck{U}^{-1} \left( \theta_0 - \widehat{\theta}^{*} \right) .
\end{equation}
Write $A^{\otimes 2} = AA^{\T}$ for any appropriate matrix $A$.
Using (\ref{eqt:diffLL_H01}), (\ref{eqt:diffLL_H01_ave})
and the cyclic property of trace, we have 
\begin{eqnarray*}
	\overline{d}_{\lrt} - \widehat{d}_{\lrt} \nonumber
		&\bumpeq& \frac{1}{m}\sum_{\ell=1}^m \left\{ \left( \theta_0 - \widehat{\theta}^{(\ell)} \right)^{\T} 
				\widecheck{U}^{-1} \left( \theta_0 - \widehat{\theta}^{(\ell)} \right)
				- \left( \theta_0 - \widehat{\theta}^{*} \right)^{\T} 
				\widecheck{U}^{-1} \left( \theta_0 - \widehat{\theta}^{*} \right) \right\} \nonumber\\
		&=& \tr\left[ 
						\widecheck{U}^{-1} \left\{
						\frac{1}{m} \sum_{\ell=1}^m 
						\left( \theta_0 - \widehat{\theta}^{(\ell)} \right)^{\otimes 2}
						- \left( \theta_0 - \widehat{\theta}^{*} \right)^{\otimes 2} \right\}
				\right] \nonumber\\
		&\bumpeq& \tr\left[
					\widecheck{U}^{-1} \frac{1}{m} \sum_{\ell=1}^m 
						\left\{ (\widehat{\theta}^{(\ell)})^{\otimes 2} 
						- \overline{\theta}^{\otimes 2} \right\}
				\right] 
		\bumpeq \tr\left( \widecheck{U}^{-1} B \right) 
		\bumpeq \tr\left( \mathcal{U}_{\theta,0}^{-1} \mathcal{B}_{\theta} \right)\label{eqt:hatDelta_underH1}
\end{eqnarray*}
as $m,n\rightarrow\infty$,
where $\mathcal{U}_{\theta,0}$ is a deterministic matrix that depends on both $\theta_0$ and 
the true value of $\theta$, and satisfies $n(\widecheck{U} - \mathcal{U}_{\theta,0})\inP 0 $.
Note that $\tr( \mathcal{U}_{\theta,0}^{-1} \mathcal{B}_{\theta} ) = k\mathcal{r}_0$, 
for some finite $\mathcal{r}_0$ by Assumption \ref{ass:likelihood}.
Then $\widehat{r}_{\lrt} \inP \mathcal{r}_0 = \tr( \mathcal{U}_{\theta,0}^{-1} \mathcal{B}_{\theta} )/k$,
proving (ii).
(But $\mathcal{U}_{\theta,0}$ may not equal to $\mathcal{U}_{\theta}$, and hence 
$\widehat{r}_{\lrt}$ may not be consistent for $\mathcal{r}_m$.)

If $H_0$ is true, 
then $\overline{\theta}\inP \theta_0$
and $\widecheck{U} \bumpeq \overline{U} \bumpeq \mathcal{U}_{\theta} = \mathcal{U}_{\theta,0}$.
Then, $\widehat{r}_{\lrt} \inP \mathcal{r}$ as $m,n\rightarrow\infty$.
So, (i) follows.
\end{proof}

\begin{proof}[Proof of Theorem \ref{thm:robEstofR}]
(i, ii) It is trivial by the definition of $\widehat{r}_{\lrt}^{\rob}$.
(iii) Applying Taylor's expansion to $\psi\mapsto\loglik^{(\ell)}(\psi)$ again, 
we know there is $\widecheck{\psi}{}^{(\ell)}$ 
lying on the line segment joining $\widehat{\psi}^{(\ell)}$ and $\widehat{\psi}^{*}$ such that 
\begin{equation}\label{eqt:proof_logLR_robr}
	\loglik^{(\ell)}(\widehat{\psi}^{*}) 
		= \loglik^{(\ell)}(\widehat{\psi}^{(\ell)}) 
				- \frac{1}{2} \left( \widehat{\psi}^{*} - \widehat{\psi}^{(\ell)} \right)^{\T} 
				 I^{(\ell)}(\widecheck{\psi}{}^{(\ell)}) \left( \widehat{\psi}^{*} - \widehat{\psi}^{(\ell)} \right).
\end{equation}
By the lower order variability of $I^{(\ell)}(\widecheck{\psi}{}^{(\ell)})$, 
we know that 
$I^{(\ell)}(\widecheck{\psi}{}^{(\ell)}) \bumpeq  \overline{I}(\widehat{\psi}^{*})$ for all $\ell$, 
where $\overline{I}(\psi)= m^{-1}\sum_{\ell=1}^m I^{(\ell)}(\psi)$.
We also know that $\widehat{\psi}^* \bumpeq \overline{\psi}$.
Thus
\begin{eqnarray}
	\overline{\delta}_{\lrt} - \widehat{\delta}_{\lrt}
		&\bumpeq& \frac{1}{m}\sum_{\ell=1}^m 
			\left( \widehat{\psi}^{*} - \widehat{\psi}^{(\ell)} \right)^{\T} 
				 \overline{I} (\widehat{\psi}^{*}) \left( \widehat{\psi}^{*} - \widehat{\psi}^{(\ell)} \right)\nonumber \\
		&=& \tr \left\{ 
				 \overline{I}(\widehat{\psi}^{*})
				\frac{1}{m}\sum_{\ell=1}^m 
				\left( \widehat{\psi}^{*} - \widehat{\psi}^{(\ell)} \right)^{\otimes 2}
			\right\} \nonumber\\
		&\bumpeq& \tr \left\{ 
				 \overline{I}(\widehat{\psi}^{*})
				\frac{1}{m}\sum_{\ell=1}^m 
				\left( \widehat{\psi}^{(\ell)} - \overline{\psi}\right)^{\otimes 2}
			\right\} 
		\bumpeq \tr\left( \mathcal{U}_{\psi}^{-1} \mathcal{B}_{\psi} \right)\label{eqt:diffDeltaL_traceForm}
\end{eqnarray}
as $m,n\rightarrow\infty$.
By the assumption of EFMI of $\psi$, 
we have $\widehat{r}_{\lrt}^{\rob} \inP \mathcal{r}$.
\end{proof}

\begin{proof}[Proof of Lemma~\ref{thm:distHatrPos}]
First, recall that, as $n\rightarrow\infty$, 
the observed data MLE $\widehat{\theta}_{\obs}$ of $\theta$ satisfies 
(\ref{eqt:psiObsHatNormal}), which can be written as 
$[\widehat{\theta}_{\obs}  \mid \theta ] \simApprox \mathcal{N}_k(\theta ,\mathcal{T}_{\theta}), $
where 
$A_{1,n} \simApprox A_{2,n}$ means that
$A_{1,n}$ and $A_{2,n}$ have the same asymptotic distribution, 
i.e., there exist deterministic sequences $\mu_n$ and $\Sigma_n$ such that 
$(A_{1,n}-\mu_n)\Sigma_n^{-1/2} \inD A$ and 
$(A_{2,n}-\mu_n)\Sigma_n^{-1/2}\inD A$ 
for some non-degenerate random variable $A$.
From Assumption \ref{ass:properImpModel}, a proper imputation model is used. So,
we have (\ref{eqt:psiellHatNormal}), which is equivalent to say that, 
as $n\rightarrow\infty$, 
\begin{equation}
	\left[\widehat{\theta}^{(\ell)}  \mid X_{\obs} \right] 
		\simApprox \mathcal{N}_k(\widehat{\theta}_{\obs}, \mathcal{B}_{\theta}), 
\end{equation}
independently for for $\ell=1,\ldots,m$.
Therefore we can represent 
\begin{eqnarray}
	\widehat{\theta}_{\obs} 
		&\simApprox& \theta + \mathcal{T}_{\theta}^{1/2} W  , \label{eqt:first_representation_thetas1}\\
	\widehat{\theta}^{(\ell)} 
		&\simApprox& \widehat{\theta}_{\obs} + \mathcal{B}_{\theta}^{1/2} Z_{\ell}, \qquad \ell=1,\ldots,m \label{eqt:first_representation_thetas2}
\end{eqnarray}
where $Z_{1},\ldots, Z_m, W \simIID\mathcal{N}_k(0,I_k)$.
Also write $Z_{\ell}=(Z_{1\ell},\ldots,Z_{k\ell})^{\T}$, for $\ell=1,2,\ldots,m$,
and $W=(W_1,\ldots,W_k)^{\T}$.
Averaging (\ref{eqt:first_representation_thetas2}) over $\ell$, 
we have $\overline{\theta} \simApprox \widehat{\theta}_{\obs} + \mathcal{B}_{\theta}^{1/2} \overline{Z}_{\bullet}$,
where $\overline{Z}_{\bullet} = m^{-1}\sum_{\ell=1}^m Z_{\ell}$.
Since $\mathcal{B}_{\theta} = \mathcal{r}\mathcal{U}_{\theta}$,
we have 
\begin{eqnarray*}
	\mathcal{U}^{-1/2}_{\theta} (\widehat{\theta}^{(\ell)} - \theta )
		&\simApprox& (1+\mathcal{r})^{1/2}W + \mathcal{r}^{1/2}Z_{\ell} ,\\
	\mathcal{U}^{-1/2}_{\theta} (\overline{\theta} - \theta )
		&\simApprox& (1+\mathcal{r})^{1/2}W + \mathcal{r}^{1/2}\overline{Z}_{\bullet}.
\end{eqnarray*}
Note that (\ref{eqt:lowerOrderVarAss}) implies $\mathcal{U}_{\theta} \bumpeq \overline{U}$.
Under $H_0$, we have $\theta = \theta_0$ and 
\begin{eqnarray*}
	\overline{d}_{\lrt} &\bumpeq& \overline{d}_{\wt}^{\prime}
		\;\;\simApprox\;\; \sum_{i=1}^k \left\{ (1+\mathcal{r})^{1/2}W_i + \mathcal{r}^{1/2}Z_{i\ell} \right\}^2, \\
	\widehat{d}_{\lrt} &\bumpeq& \widetilde{d}_{\lrt} 
		\;\;\bumpeq\;\;  \widetilde{d}_{\wt}^{\prime}
		\;\;\simApprox\;\; \sum_{i=1}^k \left\{ (1+\mathcal{r})^{1/2}W_i + \mathcal{r}^{1/2}\overline{Z}_{i} \right\}^2. 
\end{eqnarray*}
After some simple algebra, we obtain
\begin{equation*}
	\widehat{r}_{\lrt}^+ 
		\simApprox \frac{(m+1)\mathcal{r}}{mk}\sum_{i=1}^k  s_{Z_i}^2 
		\qquad\text{and}\qquad
	\widehat{D}_{\lrt}^+
		\simApprox \frac{m\sum_{i=1}^k \left\{ (1+\mathcal{r})^{1/2}W_i + \mathcal{r}^{1/2}\overline{Z}_{i\bullet} \right\}^2}{mk + (m+1)\mathcal{r}\sum_{i=1}^k  s_{Z_i}^2 },
\end{equation*}
where $s_{Z_i}^2 = (m-1)^{-1} \sum_{\ell=1}^m (Z_{i\ell} - \overline{Z}_{i\bullet})^2$ is the 
sample variance of $\{Z_{i\ell}\}_{\ell=1}^m$.
Since $W_i$, $\overline{Z}_{i\bullet}$ and $s_{Z_i}^2$ are mutually independent for each fixed $i$, 
we can simplify the representation of $\widehat{D}_{\lrt}^+$ to 
\begin{equation*}
	\widehat{r}_{\lrt}^+ 
		\simApprox \frac{(m+1)\mathcal{r}}{m(m-1)k}\sum_{i=1}^k  H_i^2
		\qquad\text{and}\qquad
\widehat{D}_{\lrt}^+
		\simApprox \frac{(m-1)\{m+(m+1)\mathcal{r}\}\sum_{i=1}^k G^2_{i} }{m(m-1)k + (m+1)\mathcal{r}\sum_{i=1}^k  H^2_{i} },  
\end{equation*}
where $G^2_{i} \simIID \chi^2_1$ and $H^2_i \simIID \chi^2_{m-1}$, for $i=1,\ldots,k$,
are all mutually independent.
Clearly, they can be further simplified to (\ref{eqt:asyDistrPos}).
\end{proof}

\begin{proof}[Proof of Theorem~\ref{thm:distHatrRob}]
Similar to (\ref{eqt:first_representation_thetas1}) and (\ref{eqt:first_representation_thetas2}), 
we can have a more general representation: 
\begin{eqnarray*}
	\widehat{\psi}_{\obs} \simApprox \psi + \mathcal{T}_{\psi}^{1/2} W; \quad
	\widehat{\psi}^{(\ell)} \simApprox\widehat{\psi}_{\obs} + \mathcal{B}_{\psi}^{1/2} Z_{\ell}, 
	\qquad \ell=1,\ldots,m ,   
\end{eqnarray*}
where $Z_1, \ldots, Z_h, W\simIID\mathcal{N}_h(0,I_h)$.
Also write $Z_{\ell}=(Z_{1\ell},\ldots,Z_{h\ell})^{\T}$, for $\ell=1,2,\ldots,m$,
and $W=(W_1,\ldots,W_h)^{\T}$.
Using (\ref{eqt:diffDeltaL_traceForm}), we have
\begin{eqnarray*}
	\overline{\delta}_{\lrt}	- \widehat{\delta}_{\lrt}
		&\bumpeq&  \tr \left\{ 
				 \overline{I}(\widehat{\psi}^{*})
				\frac{1}{m}\sum_{\ell=1}^m 
				\left( \widehat{\psi}^{(\ell)} - \overline{\psi}\right)
				\left( \widehat{\psi}^{(\ell)} - \overline{\psi}\right)^{\T}
			\right\} \\
		&\simApprox& \tr \left\{
				 \mathcal{U}_{\psi}^{-1}
				\frac{1}{m}\sum_{\ell=1}^m 
				\left[\left( \mathcal{T}_{\psi} - \mathcal{U}_{\psi}\right)^{1/2}
				\left(Z_{\ell} - \overline{Z}_{\bullet} \right) \right]^{\otimes 2}
			\right\} \\
		&=& \frac{1}{m}\sum_{\ell=1}^m\tr\left\{
			\mathcal{r} I_h \left( Z_{\ell} - \overline{Z}_{\bullet}\right)^{\otimes 2}
			\right\} 
		= \frac{\mathcal{r}}{m}\sum_{\ell=1}^m \sum_{i=1}^h (Z_{i\ell} - \overline{Z}_{i\bullet})^2.
\end{eqnarray*}
Equivalently, we can say 
$\overline{\delta}_{\lrt}	- \widehat{\delta}_{\lrt}\inD \mathcal{r} \chi^2_{h(m-1)} /m$
as $n\rightarrow\infty$.
Hence
\[	
	\widehat{r}_{\lrt}^{\rob} \inD \mathcal{r} \cdot \frac{ m+1}{hm(m-1)} \cdot \chi^2_{h(m-1)} ,
\]
which is equivalent to (\ref{eqt:asyDistrRob}).
Note that it is true under both $H_0$ and $H_1$.
\end{proof}

\begin{proof}[Proof of Theorem~\ref{thm:exactNullDistRrob}]
From the representations of $\widehat{d}_{\lrt}^{\rob}$ and $\widehat{r}_{\lrt}^{\rob}$
in Lemma \ref{thm:distHatrPos} and Theorem \ref{thm:distHatrRob}, 
we know that they are asymptotically ($n\rightarrow\infty$) independent. The proof then follows the derivation for Lemma \ref{thm:distHatrPos}. 
\end{proof}

\begin{proof}[Proof of Theorem~\ref{thm:AsyEq_r_LL_Delta}]
(i) Using the representation (\ref{eqt:alternativeDeltad}), 
we can easily see that $\widehat{r}_{\lrt}^{\pert}\geq 0$.
(ii) It suffices to show 
\[
	m^{-1}\sum_{\ell=1}^m d_{\lrt}(\widehat{\psi}_{0}^{(\ell)}+\Delta_{m},\widehat{\psi}^{(\ell)}\mid X^{(\ell)}) 
\bumpeq \overline{d}_{\lrt} - \widetilde{d}_{\lrt}, 
\]
where $\Delta_{m} = \widehat{\psi}^{*} - \widehat{\psi}_{0}^{*}$.
Under $H_0$, $\Delta_{m}\bumpeq 0$ and 
$\widehat{\psi}_{0}^{(\ell)} \bumpeq \widehat{\psi}^{(\ell)}$,
so $\widehat{\psi}_{0}^{(\ell)}+\Delta_{m} \bumpeq \widehat{\psi}^{(\ell)}$.
Using Taylor's expansion on 
$\psi\mapsto\loglik^{(\ell)}(\psi)$ around 
its maximizer $\widehat{\psi}^{(\ell)}$, we have
for $\psi\bumpeq \widehat{\psi}^{(\ell)}$ that 
\[
	\loglik^{(\ell)}(\psi) 
		\bumpeq \loglik^{(\ell)}(\widehat{\psi}^{(\ell)}) 
				- \frac{1}{2} \left( \psi - \widehat{\psi}^{(\ell)} \right)^{\T} 
				 I^{(\ell)}(\widehat{\psi}^{(\ell)}) \left( \psi - \widehat{\psi}^{(\ell)} \right).
\]
Under the parametrization of $\psi$ in the proof of Theorem~\ref{thm:AsyEq_dBar_LL}, 
we know that the upper $k\times k$ sub-matrix of $ I^{(\ell)}(\widehat{\psi}^{(\ell)})$ is $\left( U^{(\ell)} \right)^{-1}$.
Using the lower order variability of $U^{(\ell)}$, 
we have $\left( U^{(\ell)} \right)^{-1} \bumpeq \overline{U}^{-1}$ and 
\begin{eqnarray*}
	\frac{1}{m}\sum_{\ell=1}^m d_{\lrt}(\widehat{\psi}_{0}^{(\ell)}+\Delta_{m},\widehat{\psi}^{(\ell)}\mid X^{(\ell)})\nonumber
	&\bumpeq& \frac{1}{m}\sum_{\ell=1}^m  \left( \widehat{\psi}_{0}^{(\ell)}+\Delta_{m} - \widehat{\psi}^{(\ell)} \right)^{\T} 
				 I^{(\ell)}(\widehat{\psi}^{(\ell)}) \left( \widehat{\psi}_{0}^{(\ell)}+\Delta_{m} - \widehat{\psi}^{(\ell)} \right) \nonumber\\
	&\bumpeq& \frac{1}{m}\sum_{\ell=1}^m (\widehat{\theta}^{(\ell)}-\overline{\theta})^{\T} \overline{U}^{-1} (\widehat{\theta}^{(\ell)}-\overline{\theta}) = \overline{d}_{\wt}^{\prime} - \widetilde{d}_{\wt}^{\prime} 
    \bumpeq \overline{d}_{\lrt} - \widehat{d}_{\lrt} . 
\end{eqnarray*}
Therefore, the desired result follows. 
\end{proof}

\begin{proof}[Proof of Theorem \ref{prop:approxMLE}]
Throughout this proof, conditions (a), (b) and (c) 
refer to the list given in Assumption \ref{ass:for_propApproxMLE}. 
(i, ii) It trivially follows from the definitions of $\widehat{d}_{\Stack}$
and $\widehat{r}_{\Stack}$.
(iii) First, by the definition of maximizer and condition (a), we have 
\begin{eqnarray}
		\underline{\overline{\loglik}}(\widehat{\psi}^{*}) - \underline{\overline{\loglik}}(\widehat{\psi}^{\Stack}) 
		&=& \underline{\overline{\loglik}}(\widehat{\psi}^{*}) - \underline{\overline{\loglik}}^{\Stack}(\widehat{\psi}^{\Stack}) 
			+	\underline{\overline{\loglik}}^{\Stack}(\widehat{\psi}^{\Stack}) - 	\underline{\overline{\loglik}}(\widehat{\psi}^{\Stack})  \nonumber\\
		&\leq& \underline{\overline{\loglik}}(\widehat{\psi}^{*}) - \underline{\overline{\loglik}}^{\Stack}(\widehat{\psi}^{*}) 
			+	\underline{\overline{\loglik}}^{\Stack}(\widehat{\psi}^{\Stack}) - 	\underline{\overline{\loglik}}(\widehat{\psi}^{\Stack})  \nonumber\\
		&\leq& 2\sup_{\psi\in\Psi} \left\vert \underline{\overline{\loglik}}(\psi) 
				- \underline{\overline{\loglik}}^{\Stack}(\psi)\right\vert \nonumber 
		= O_p(1/n),  
		\label{eqt:diffLL_Opm}
\end{eqnarray}
which, together with condition (b), imply that
\begin{eqnarray}
		\overline{\underline{\mathcal{L}}}(\psi_{}^{*}) - \overline{\underline{\mathcal{L}}}(\widehat{\psi}^{\Stack}) 
		&=& \left\{\overline{\underline{\mathcal{L}}}(\psi_{}^{*}) - \underline{\overline{\loglik}}(\psi_{}^{*})\right\}
			+ \left\{\underline{\overline{\loglik}}(\psi_{}^{*}) - \underline{\overline{\loglik}}(\widehat{\psi}^{\Stack}) \right\} + \left\{ \underline{\overline{\loglik}}(\widehat{\psi}^{\Stack}) - \overline{\underline{\mathcal{L}}}(\widehat{\psi}^{\Stack}) \right\} \nonumber \\ 
		& \leq& 2\sup_{\psi\in\Psi} \left\vert \underline{\overline{\loglik}}(\psi) - \overline{\underline{\mathcal{L}}}(\psi) \right\vert 
			+ \left\{\underline{\overline{\loglik}}(\widehat{\psi}^{*}) - \underline{\overline{\loglik}}(\widehat{\psi}^{\Stack}) \right\} = o_p(1).\label{eqt:diffELL}
\end{eqnarray}
Using (\ref{eqt:diffELL}) and (c), 
we have $\widehat{\psi}^{\Stack}\inP\psi_{}^{*}$. 
By (b) and (c), we also have $\widehat{\psi}^{*}\inP\psi_{}^{*}$.
So,
$\left\vert \widehat{\psi}^{\Stack}-\widehat{\psi}^{*} \right\vert \inP \mathbf{0}$
as $n\rightarrow\infty$.
By the definition of maximizer, 
\begin{equation}\label{eqt:defMax}
	\mathbf{0}=\nabla\underline{\overline{\loglik}}^{\Stack}(\widehat{\psi}^{\Stack})
	= \nabla\underline{\overline{\loglik}}(\widehat{\psi}^{\Stack})
		+ \nabla R(\widehat{\psi}^{\Stack}), 
\end{equation}
where $\nabla g(\psi) = \partial g(\psi)/\partial\psi$ is the gradient of $\psi\mapsto g(\psi)$.
By condition (a), we know $\nabla R(\widehat{\psi}^{\Stack})=O_p(1/n)$.
Thus, together with (\ref{eqt:defMax}), 
we have $\nabla\underline{\overline{\loglik}}(\widehat{\psi}^{\Stack})=O_p(1/n)$.
Also, by the definition of MLE, 
we have $\nabla\underline{\overline{\loglik}}(\widehat{\psi}^{*})=\mathbf{0}$.

By Taylor's expansion, 
there exists $\widecheck{\psi}$ 
such that 
\begin{eqnarray}\label{eqt:diffLbarunderline}
	\underline{\overline{\loglik}}(\widehat{\psi}^{*})
		-\underline{\overline{\loglik}}(\widehat{\psi}^{\Stack}) 
		&=& \left\{ \nabla\underline{\overline{\loglik}}(\widecheck{\psi}) \right\}^{\T}
			\left( \widehat{\psi}^{*}- \widehat{\psi}^{\Stack} \right) = o_p(1/n),
\end{eqnarray}
where we have used the continuity of $\psi \mapsto \nabla\underline{\overline{\loglik}}(\psi)$
to yield $\nabla\underline{\overline{\loglik}}(\widecheck{\psi}) = O_p(1/n)$.
Rewriting (\ref{eqt:diffLbarunderline}), we have
\begin{equation}\label{eqt:diffLL1}
	\underline{\overline{\loglik}}(\widehat{\psi}^{*})
		-\underline{\overline{\loglik}}^{\Stack}(\widehat{\psi}^{\Stack})
		= R(\widehat{\psi}^{\Stack}) + o_p(1/n).
\end{equation}
Similar to (\ref{eqt:diffLL1}), we have 
\begin{equation}\label{eqt:diffLL2}
	\underline{\overline{\loglik}}(\widehat{\psi}_{0}^{*})
		-\underline{\overline{\loglik}}^{\Stack}(\widehat{\psi}_{0}^{\Stack})
		= R(\widehat{\psi}_{0}^{\Stack}) + o_p(1/n).
\end{equation}
Then, using (\ref{eqt:diffLL1}) and (\ref{eqt:diffLL2}), 
we have 
\begin{eqnarray*}
	\left\vert \widehat{d}_{\lrt}- \widehat{d}_{\Stack} \right\vert 
	 &=& 2n\left\vert
	\left\{ \underline{\overline{\loglik}}(\widehat{\psi}^{*}) 
		- \underline{\overline{\loglik}}^{\Stack}(\widehat{\psi}^{\Stack}) 
		 \right\}
	- \left\{ 
		\underline{\overline{\loglik}}(\widehat{\psi}_{0}^{*})
		- \underline{\overline{\loglik}}^{\Stack}(\widehat{\psi}_{0}^{\Stack}) \right\}\right\vert \\
	&=& 2n \left\vert R(\widehat{\psi}^{\Stack}) - R(\widehat{\psi}_{0}^{\Stack}) + o_p(1/n)\right\vert .
\end{eqnarray*}

Now consider two cases.
\begin{itemize}[noitemsep]
	\item[(i)] Under $H_0$, 
			we have $\widehat{d}_{\lrt}=O_p(1)$ and 
			$\widehat{\psi}_{0}^{\Stack}\bumpeq \widehat{\psi}^{\Stack}$.
			Thus condition (a) 
			implies $R(\widehat{\psi}^{\Stack}) - R(\widehat{\psi}_{0}^{\Stack})=o_p(1/n)$.
			Then,
			we have $\left\vert \widehat{d}_{\lrt}- \widehat{d}_{\Stack} \right\vert = o_p(\widehat{d}_{\lrt})$.
	\item[(ii)] Under $H_1$, we have 
			$\widehat{d}_{\lrt}\inP\infty$. 
			Condition (a) and (\ref{eqt:diffLL_Opm}) imply that 
			$\underline{\overline{\loglik}}(\widehat{\psi}^{*}) 
					- \underline{\overline{\loglik}}^{\Stack}(\widehat{\psi}^{\Stack}) = O_p(1/n)$. 
			Similarly, we also have 
			$\underline{\overline{\loglik}}(\widehat{\psi}_{0}^{*})
					- \underline{\overline{\loglik}}^{\Stack}(\widehat{\psi}_{0}^{\Stack})=O_p(1/n)$.
			Hence $\left\vert \widehat{d}_{\lrt}- \widehat{d}_{\Stack} \right\vert=O_p(1)$.
			Thus we have $\left\vert \widehat{d}_{\lrt}- \widehat{d}_{\Stack} \right\vert = o_p(\widehat{d}_{\lrt})$.
\end{itemize}
Therefore, under either $H_0$ or $H_1$, we also have 
$\left\vert \widehat{d}_{\lrt}- \widehat{d}_{\Stack} \right\vert = o_p(\widehat{d}_{\lrt})$.
Since $\widehat{d}_{\lrt} \bumpeq \widehat{d}_{\Stack}$
and $\overline{d}_{\lrt} = \overline{d}_{\Stack}$, 
we know $\widehat{r}_{\lrt} \bumpeq \widehat{r}_{\Stack}$.
\end{proof}

Note that, even under the assumption of this theorem, $\widehat{r}_{\Stack}$ and $\widehat{r}^{\rob}_{\Stack}$ are not equivalent. 
From (\ref{eqt:def_hatrs}) and (\ref{eqt:def_rharrobCompeasy}), 
$\widehat{r}_{\Stack}$ and $\widehat{r}^{\rob}_{\Stack}$
are a ``difference of difference'' estimator 
and a ``difference" estimator, respectively. 
So, the ``bias'' of using $\overline{\loglik}^{\Stack}(\psi)$
cannot be canceled out in $\widehat{r}^{\rob}_{\Stack}$.

\end{document}